\numberwithin{equation}{section}
\def\Zp{\mathbb{Z}_p}
\def\Zn{\mathbb{Z}_n}
\def\R{\mathbb R}
\def\Z{\mathbb Z}
\def\C{\mathbb C}
\def\N{\mathbb N}
\def\CA{\mathcal A}
\def\gcd{\operatorname{gcd}}
\renewcommand\Re{\operatorname{Re}}
\DeclareMathOperator\disc{disc}
\DeclareMathOperator\AP{AP}
\DeclareMathOperator\sgn{sgn}
\DeclareMathOperator\herdisc{herdisc}
\newcommand\AND{\mathrm{and}}
\newcommand\Forall{\mathrm{\;for\;all\;}}
\newcommand{\Mod}[1]{\;(\mathrm{mod}\;#1)}
\newtheorem{theorem}{Theorem}[section]
\newtheorem{lemma}[theorem]{Lemma}
\newtheorem{proposition}[theorem]{Proposition}
\newtheorem*{theorem*}{Theorem}
\newtheorem{corollary}[theorem]{Corollary}
\newtheorem{problem}{Problem}[section]
\theoremstyle{remark}
\newtheorem{remark}[theorem]{Remark}
\newtheorem*{notations}{Notations}
\theoremstyle{definition}
\theoremstyle{remark}
\numberwithin{equation}{section}
\begin{document}
	\title{Discrepancy in Modular Arithmetic Progressions}
	\author{Jacob Fox \and Max Wenqiang Xu \and Yunkun Zhou}
	\thanks{Fox is supported by a Packard Fellowship and by NSF award DMS-1855635. \\
		\indent Xu is supported by the Cuthbert C. Hurd Graduate Fellowship in the Mathematical Sciences, Stanford. \\
		\indent Zhou is supported by NSF GRFP Grant DGE-1656518.}
	\address{Department of Mathematics, Stanford University, Stanford, CA, USA}
	\email{\{jacobfox,maxxu,yunkunzhou\}@stanford.edu}

	\maketitle
	\begin{abstract}
		Celebrated theorems of Roth and of Matou\v{s}ek and Spencer together show that the discrepancy of arithmetic progressions in the first $n$ positive integers is $\Theta(n^{1/4})$. We study the analogous problem in the $\mathbb{Z}_n$ setting. We asymptotically determine the logarithm of the discrepancy of arithmetic progressions in $\Zn$ for all positive integer $n$. We further determine up to a constant factor the discrepancy of arithmetic progressions in $\Zn$ for many $n$. For example, if $n=p^k$ is a prime power, then 
		the discrepancy of arithmetic progressions in $\Zn$ is $\Theta(n^{1/3+r_k/(6k)})$, where $r_k \in \{0,1,2\}$ is the remainder when $k$ is divided by $3$. This solves a problem of Hebbinghaus and Srivastav.       
	\end{abstract}
	\section{Introduction}
Given a finite set $\Omega$, a (two-)coloring of $\Omega$ is a map $\chi: \Omega\to \{1, -1\}$, and a partial coloring is a map $\chi: \Omega \to \{-1, 0, 1\}$. For $A\subseteq \Omega$, let $\chi(A) = \sum_{x\in A}\chi(x).$ For a family $\mathcal{A}$ of subsets of $\Omega$, the discrepancy of $\mathcal{A}$ is defined to be  
\[\disc(\mathcal{A}): = \min_{\chi} \max_{A \in \mathcal{A}} |\chi(A)|,\]
where the minimum is over all colorings of $\Omega$. The discrepancy measures the guaranteed irregularity of colorings with respect to a set system. Discrepancy theory is a rich area of study, see the books \cite{AS,BC,Chazelle,Mat}. 

	In particular, the study of discrepancy of arithmetic progressions has a long history, including notable results of Weyl from 1916 \cite{Weyl} and Roth from 1964 \cite{Roth}. Let $[n]:= \{1,2,3,\dots,n\}$ and $\CA$ be the set of arithmetic progressions in $[n]$. Using Fourier analysis,
	Roth  \cite{Roth} proved that there is an absolute constant $c > 0$ such that
	\[\disc(\mathcal{A}) \geq cn^{\frac 14}.  \]
	The exponent $1/4$ was unexpected, as random colorings suggest that the best exponent might be $1/2$. Later, improving on a result of Montgomery and S\'ark\"ozy (see Problem 10 in \cite{Erdos-Sarkozy}), Beck \cite{Beck} proved that Roth's lower bound is sharp up to a polylogarithmic factor. It was a big challenge to remove the polylogarithmic factor and show that Roth's bound is sharp up to a constant factor, and it was finally done by Matou\v{s}ek and Spencer \cite{MS} via entropy and partial coloring methods.	
	
	The modular variant is also a very natural problem to study. For a positive integer $n$, an arithmetic progression in $\Zn$ is a set of the form $\{a + kd: 0\leq k < l\}$ for any $a, d\in \Zn$ and $l\geq 0$. To avoid repeated elements in the set, we may assume that $l \leq \frac{n}{\gcd(n, d)}$. Let $\mathcal{A}_n$ be the set of all arithmetic progressions in $\Zn$. The quantity we are interested in is 
	\[\disc(\mathcal{A}_n): = \min_{\chi: \Zn\to \{1,-1\} } \max_{A \in \mathcal{A}_n} |\chi(A)|.  \]
	In the case where $n=p$ is a prime, the following lower and upper bounds are proved by Hebbinghaus and Srivastav \cite{HS} and Alon and Spencer (Theorem 13.1.1 in \cite{AS}) respectively. There exist positive constants $c_1, c_2$ such that
	\[c_1  \sqrt{p} \le   \disc(\mathcal{A}_p) \le c_2  \sqrt{p\log p}.\]
	Hebbinghaus and Srivastav \cite{HS} wrote that it seemed to be a difficult open problem to close the $O(\sqrt{\log p})$ multiplicative gap between the upper and lower bounds. As part of our results, we remove the $\sqrt{\log p}$ factor in the upper bound and resolve the problem of determining $\disc(\CA_p)$ up to a constant factor. 
	
The problem for general $\Zn$ is more challenging and interesting. Note that any arithmetic progression in $[n]$ is also an arithmetic progression in $\Zn$, so Roth's lower bound from the integer case also applies. As the number of sets in $\CA_n$ is polynomial in $n$, considering a random coloring gives the following upper bound (see Theorem 13.1.1 in \cite{AS}). So there are $c_1,c_2 >0$ such that
	\[c_1 n^{\frac{1}{4}} \le \disc(\mathcal{A}_n) \le   c_2  n^{\frac{1}{2}} (\log n)^{\frac{1}{2}} . \] 
    Our main theorem asymptotically determines the logarithm of $\disc(\CA_n)$. It shows that $\disc(\CA_n)$ depends heavily on the 
    arithmetic structure of $n$, and neither of the above bounds are sharp. 
	
	Let $\omega(n)$ be the number of distinct prime factors of $n$ and $d(n)$ be the number of divisors of $n$.
	
	\begin{theorem}\label{main theorem}
		There exists an absolute constant $c > 0$ such that for any positive integer $n$,
		\[\frac{1}{8\sqrt{d(n)}} \cdot \min_{r|n}\left(\frac{n}{r} + \sqrt{r}\right) \le \disc(\CA_n) \leq \min_{r|n}\left(\frac{n}{r} + c \sqrt{r}\cdot 2^{\omega(r)}\right).\]
	\end{theorem}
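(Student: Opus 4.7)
The plan is to attack the two bounds by rather different methods.

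For the upper bound, I would construct, for each divisor $r$ of $n$, an explicit $\pm 1$ coloring of $\Zn$ whose AP-discrepancy is $O(n/r + \sqrt{r}\cdot 2^{\omega(r)})$, and then minimize over $r$. The construction would have two layers. An inner coloring $\chi_0\colon \mathbb{Z}_r \to \{\pm 1\}$ with low AP-discrepancy on $\mathbb{Z}_r$ is built via the Chinese Remainder isomorphism: writing $r = p_1^{a_1}\cdots p_k^{a_k}$, we have $\mathbb{Z}_r \cong \prod_i \mathbb{Z}_{p_i^{a_i}}$, and $\chi_0$ is defined as a suitable product of low-discrepancy colorings on each prime-power factor; each such factor contributes $O(\sqrt{p_i^{a_i}})$ discrepancy, so combining $\omega(r)$ factors produces discrepancy $O(\sqrt{r}\cdot 2^{\omega(r)})$ for $\chi_0$ on $\mathbb{Z}_r$. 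The inner coloring is then promoted to $\Zn$ by setting $\chi(x) = \chi_0(x \bmod r)\cdot \chi_1(\lfloor x/r\rfloor)$ for an outer coloring $\chi_1\colon \mathbb{Z}_{n/r}\to\{\pm 1\}$. The role of $\chi_1$ is to handle APs in $\Zn$ whose common difference is a multiple of $r$ (these stay within a single residue class mod $r$ and would otherwise accumulate without cancellation): such APs project to APs in $\mathbb{Z}_{n/r}$ and contribute at most $O(n/r)$ to the discrepancy.

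For the lower bound, let $\chi\colon \Zn\to\{\pm 1\}$ be any coloring with AP-discrepancy $D$. The main tool is Plancherel on $\Zn$. For every divisor $r$ of $n$, the cosets of $(n/r)\Zn$ in $\Zn$ are APs of length $r$ with common difference $n/r$, so the coset sums satisfy $|\chi(C)| \le D$; summing the squares over the $n/r$ cosets and passing to the Fourier side gives
\[
\sum_{\xi \in r\Zn} |\hat\chi(\xi)|^2 \;\le\; \frac{n^2}{r^2}\,D^2 \qquad \text{for every } r \mid n,
\]
with the dual estimate $\sum_{\xi \in (n/r)\Zn} |\hat\chi(\xi)|^2 \le r^2 D^2$ obtained by swapping $r \leftrightarrow n/r$. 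Combined with Parseval $\sum_\xi |\hat\chi(\xi)|^2 = n^2$, this is a family of $d(n)$ Fourier-mass inequalities. The plan is to show that these constraints cannot all be simultaneously satisfied unless $D \gtrsim (n/r + \sqrt r)/\sqrt{d(n)}$ for some divisor $r$. Partitioning the Fourier support of $\chi$ by $\gcd(\xi,n)$ into the $d(n)$ level sets $W_s := \sum_{\xi : \gcd(\xi,n)=s}|\hat\chi(\xi)|^2$, which sum to $n^2$, a pigeonhole identifies some level $s^*\mid n$ with $W_{s^*} \ge n^2/d(n)$; the level-$s^*$ constraint then pins down a relation between $s^*$ and $D$, producing one of the two terms in the bound. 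Complementing this with a parallel Fourier analysis on a coset of $s^*\Zn$ (which, viewed as $\mathbb{Z}_{n/s^*}$, inherits AP-discrepancy at most $D$) produces the reciprocal term, and balancing the two at the appropriate divisor $r$ yields the claimed estimate.

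The hardest part I expect is the combination step of the lower bound: extracting both the $n/r$ and $\sqrt{r}$ contributions from a single divisor, while keeping the loss from aggregating $d(n)$ Fourier constraints to only $\sqrt{d(n)}$ rather than $d(n)$, which requires a carefully weighted Cauchy-Schwarz across the divisor lattice of $n$ with weights calibrated to the shape of the level-$s$ bounds $(n/s)^2 D^2$. This is further complicated because the divisors of $n$ may be irregularly spaced, so the pigeonhole must be replaced by a weighted averaging that respects the multiplicative structure of $n$ in order to select the correct divisor $r$ at which the minimum of $n/r + \sqrt r$ is realized.
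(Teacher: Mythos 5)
Your proposal captures the right high-level shape (Fourier analysis for the lower bound, divisor-structured colorings for the upper bound, balance at the correct divisor $r$), but both halves are missing the essential mechanism, and as written neither would close.

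\textbf{Upper bound.} The paper does not merely pull back a low-AP-discrepancy coloring of $\Z_r$ to $\Zn$: the crucial point is that the coloring of $\Z_r$ must simultaneously have discrepancy $\le 1$ over \emph{every} congruence class of $\Z_r$ (i.e.\ over $\CA_r^0$), not just over initial segments. This is because an AP in $\Zn$ with difference $d$ projects mod $r$ to an AP that wraps around a congruence class $C(\gcd(d,r),a)\subset\Z_r$ up to $n/r$ times, and each full wrap contributes $\chi(C)$; without $|\chi(C)|\le 1$ those contributions do not cancel and you lose the $n/r$ term. Your product coloring $\chi(x)=\chi_0(x\bmod r)\chi_1(\lfloor x/r\rfloor)$ does not address this, and the claim that a CRT product of low-discrepancy colorings of $\Z_{p_i^{a_i}}$ has AP-discrepancy $O(\sqrt{r}\cdot 2^{\omega(r)})$ on $\Z_r$ is unsubstantiated: APs in $\Z_r$ do not decompose as products of APs in the CRT factors, so the factor discrepancies do not multiply. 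In the paper the $2^{\omega(r)}$ arises from a much more elaborate construction (a partition of $\Z_r$ into $d(r)$ divisor-indexed pieces, each colored via a translation-cancellation argument), not from a naive tensor product.

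\textbf{Lower bound.} Your Fourier-mass inequalities $\sum_{\xi\in r\Zn}|\hat\chi(\xi)|^2\le (n/r)^2D^2$ are correct (they are exactly the paper's bound $G_\chi(n/r)\le (n/r)T_\chi^2$ pushed through Plancherel), but this family of constraints alone cannot force $D$ to be large. For $n$ prime with a balanced coloring, all the Fourier mass sits at frequencies $\xi$ with $\gcd(\xi,n)=1$, which are constrained only by the trivial $r=1$ inequality; the pigeonhole on level sets $W_s$ then yields nothing. The paper's actual engine is different: it compares a lower bound and an upper bound for the auxiliary quantity $\sum_a\sum_b|f(a+bM)|^2$ with $M=\{0,\dots,m-1\}$ (Lemmas~\ref{lem:Fourier-RHS} and \ref{lem:LHS}), where the lower bound produces the asymmetric weight $\max\bigl(m^2\gcd(r,n)/n,\ m\bigr)$ on each $|\hat f(r)|^2$ and the upper bound produces $n^2T_f^2$ plus terms controlled by $G_f$. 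It is this comparison, together with a M\"obius-type identity (Lemma~\ref{lem:eqn-from-mobius-transform}) to cancel the $G_f$ contributions, that yields Proposition~\ref{prop:lower-bound-unsimplified} and hence the $\min(\sqrt{t_1},n/t_2)$ bound. Your plan to extract both $n/r$ and $\sqrt r$ from a single divisor via weighted Cauchy--Schwarz over the divisor lattice does not have an analogue of this comparison step, which is where the real content lies.
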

	Note that we have $2^{\omega(r)}\leq 2^{\omega(n)} \leq d(n) = n^{o(1)}$ (see \cite{HW}).
	This implies that
	\[\disc(\CA_n) =  n^{\frac{1}{3} + x + o(1)},\]
	where $x \geq 0$ is the largest real number such that there is no factor of $n$ in the range $(n^{\frac{2}{3}-x}, n^{\frac{2}{3} +2x})$.
	Thus, our results determine the correct exponent of $n$ for $\disc(\CA_n)$ up to $o(1)$. 
	
	Notice that when $n$ has a bounded number of factors, Theorem \ref{main theorem} determines $\disc(\CA_n)$  up to a constant factor. In particular, when $n$ is prime, we have $\disc(\CA_n) = \Theta(n^{1/2})$, and this solves the problem of Hebbinghaus and Srivastav \cite{HS} discussed earlier. 
	
	We actually prove slightly stronger bounds, but we choose the formulation in Theorem~\ref{main theorem} for simplicity. These stronger bounds give the following sharp bound for $\disc(\CA_n)$ when $n$ is a prime power. 
	\begin{theorem}\label{prime power} If $n = p^k$ for prime $p$ and positive integer $k$, then
		\[  \disc(\CA_{n}) = \Theta\left( p^{\frac{k-\lfloor k/3\rfloor}{2}}\right) =  \Theta\left(n^{\frac{1}{3}+\frac{r_k}{6k}}\right),\]
		where $r_k \in \{0,1,2\}$ is the remainder when $k$ is divided by $3$. 
	\end{theorem}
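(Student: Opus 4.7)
Both inequalities are deduced from Theorem~\ref{main theorem} specialized to $n = p^k$. The divisors of $p^k$ are the powers $p^0, p^1, \dots, p^k$, so $\omega(r) \le 1$ for every $r \mid n$, and $d(n) = k+1$.

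\emph{Upper bound.} The upper bound in Theorem~\ref{main theorem} collapses to $\disc(\CA_n) \le \min_{0 \le j \le k}\bigl(p^{k-j} + 2c\,p^{j/2}\bigr)$. I optimize by cases on $r_k$: when $r_k = 0$ (so $k = 3m$) take $j = 2m$, making both terms equal to $p^m$; when $r_k = 1$ (so $k = 3m+1$) take $j = 2m+1$, giving $p^{k-j} = p^m$ and $p^{j/2} = p^{m+1/2}$; when $r_k = 2$ (so $k = 3m+2$) take $j = 2m+1$, giving $p^{k-j} = p^{m+1}$ and $p^{j/2} = p^{m+1/2}$. In every case the larger of the two terms equals $p^{(k-\lfloor k/3\rfloor)/2} = p^{k/3 + r_k/6} = n^{1/3 + r_k/(6k)}$, giving the claimed upper bound.

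\emph{Lower bound.} Plugging $n = p^k$ into Theorem~\ref{main theorem} directly yields
\[
\disc(\CA_n) \;\ge\; \frac{1}{8\sqrt{k+1}} \cdot \min_{0 \le j \le k}\bigl(p^{k-j} + p^{j/2}\bigr) \;=\; \Omega\!\left(\frac{p^{(k-\lfloor k/3\rfloor)/2}}{\sqrt{k+1}}\right),
\]
which falls short of the target by a factor $\sqrt{k+1}$. To remove this loss I would invoke the sharper form of the lower bound that the authors signal as part of their ``slightly stronger bounds.'' My expectation is that the $\sqrt{d(n)}$ factor in Theorem~\ref{main theorem} comes from a Cauchy--Schwarz (or union-bound) step over the $d(n)$ divisor scales entering the Fourier analysis of $\chi$ on $\Zn$. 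Since the divisors of a prime power form a chain $1 \mid p \mid p^2 \mid \cdots \mid p^k$, one should be able to extract the entire lower bound from the single critical scale $r = p^{j^{*}}$ with $j^{*} \approx 2k/3$, using one character (or a tightly chosen family of APs of step $p^{j^{*}}$) whose contribution alone is already $\Omega(p^{(k-\lfloor k/3\rfloor)/2})$, thus avoiding the scale-aggregation step and the resulting $\sqrt{k+1}$ loss.

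\emph{Main obstacle.} The upper bound is a short case check. The genuine difficulty is the lower bound: one must reopen the Fourier analytic argument behind Theorem~\ref{main theorem} and verify that, in the prime power setting, a single dominant Fourier scale already carries the full $\Omega(p^{(k-\lfloor k/3\rfloor)/2})$ contribution, so that the averaging over divisor scales (which produces the $\sqrt{d(n)}$ loss in the general case) becomes unnecessary.
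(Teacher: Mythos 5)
Your case analysis is correct and is essentially the route the paper takes: for $n=p^k$ every nontrivial divisor $r=p^j$ has $\omega(r)=1$, so the general upper bound collapses to $\min_j(p^{k-j}+2c\,p^{j/2})$, which is optimized at $j=2\lfloor k/3\rfloor$ or $j=2\lfloor k/3\rfloor+1$ depending on $r_k$, and yields $O(p^{(k-\lfloor k/3\rfloor)/2})$. That matches the paper's Theorem~\ref{thm:bound-from-distribution} specialized to $n=p^k$.

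\textbf{Lower bound.} You correctly diagnose that the $\sqrt{d(n)}$ loss in Theorem~\ref{main theorem} is unacceptable (here $\sqrt{k+1}$) and that the ``slightly stronger bounds'' must be used, but the mechanism you propose --- restricting to a single critical Fourier scale $r=p^{j^*}$ and extracting the full lower bound from one character --- is not what the paper does, and as stated it would not work. The argument behind Proposition~\ref{prop:lower-bound-unsimplified} controls the discrepancy through an inequality of the form
\[
\bigl(\tfrac1m + S_2\bigr)\,T_\chi^2 \ge 1 - \tfrac{m}{n}S_1, \quad \text{where}\quad S_1=\!\!\sum_{1\le s\le l:\,s\mid n}\!\!\phi(s),\;\; S_2=\!\!\sum_{l<s\le n:\,s\mid n}\!\!\frac1{s^2},
\]
and the key point is that one must keep the whole Plancherel identity $\sum_r|\widehat\chi(r)|^2=n^2$: you have no control over where the Fourier mass of an adversarial coloring $\chi$ sits, so you cannot isolate one scale. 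What actually removes the $\sqrt{d(n)}$ loss for prime powers is that the divisor sums $S_1$ and $S_2$ are exact geometric series rather than sums over an arbitrary divisor set: taking $l=p^t$ one gets $S_1=p^t$ and $S_2\le 2p^{-2t-2}$ with \emph{no} $d(n)$ factor, and choosing $t=\lfloor k/3\rfloor$ gives $\disc(\CA_n)\ge \tfrac14\,p^{(k-\lfloor k/3\rfloor)/2}$ directly (this is Corollary~\ref{cor:lower bound prime powers}). In short, the paper keeps the aggregation over all scales but exploits that it is geometric, whereas your plan would discard all but one scale --- a step you would not be able to justify. As you yourself note, the lower bound is the real content here, and your proposal leaves it as a plausible-sounding but incorrect sketch rather than a proof.
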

	
	\begin{remark}\label{rem: unit 1}	
		It is also natural to study the discrepancy problem in the case where the coloring function $\chi$ is allowed to take any value in the unit circle in the complex plane $\{z\in \C: |z| = 1\}$.
		For example, this choice of coloring functions is studied in Tao's \cite{Tao} remarkable solution to the Erd\H{o}s discrepancy problem. By comparing the definitions of the coloring functions, we know that the discrepancy under the more general choice of $\chi$ is at most as large as the discrepancy under the choice $\chi: \Zn \to \{1,-1\}$. We remark that our proof is robust enough to give the same lower bound on discrepancy when $\chi$ is under the above choice. See Remark~\ref{rem:unit 2} for details on how to extend the proof of the lower bound to the more general setting.
		
	\end{remark}
	
	\begin{notations}Throughout the paper, all logarithms are base $e$ unless otherwise specified.
		
		We use symbols $c, c_0, c_1, c_2$, etc. to denote absolute constants. To avoid using too many symbols in different parts of the paper, we reuse these symbols to denote different constants. We make no attempt to optimize constant factors in our results.

		We treat elements in $\Zn$ as if they are in $\Z$ in the following ways. For an element $r\in \Zn$ and integers $a, b\in \Z$, we say $a\leq r \leq b$ if there exists an element $r'\in \Z$ such that $a\leq r' \leq b$ and $r' \equiv r\Mod n$. The notation is similar if either $\leq$ is replaced with $<$. We typically use it when $0\leq a \leq b < n$, so this notation should not cause any confusion. For any $a\in \Zn$, we may also define $\gcd(a, n) = \gcd(a', n)$ for any $a'\in \Z$ with $a' \equiv a\Mod n$. For two nonnegative integers $a$ and $b$, we write $a|b$ if $a$ divides $b$. For any factor $r$ of $n$ and $a\in \Zn$, we say that $r|a$ if $r|\gcd(a, n)$.
	\end{notations}
	
	\subsection*{Organization}
	In Section \ref{entropy}, we derive our first upper bound on $\disc(\CA_n)$ (see Corollary~\ref{cor:subset-coloring}). The upper bounds in Theorems~\ref{main theorem} and \ref{prime power} are proved in Section
	\ref{section upper} (see Theorem~\ref{thm:bound-from-distribution}). In Section~\ref{sec lower bounds}, we prove the lower bounds in Theorem~\ref{main theorem} (Corollary~\ref{cor:lower-bound-with-divisor}) and Theorem~\ref{prime power} (Corollary~\ref{cor:lower bound prime powers}). We end with some concluding remarks and open problems in Section \ref{concludingremarks}.

	\section{The First Step towards the Upper Bounds}\label{entropy}
	We use the following version of a lemma of Matou\v{s}ek and Spencer \cite{MS} to show there is a partial coloring that colors a constant fraction of the 
	elements of a set system with low discrepancy. The proof of this lemma uses the entropy method.
	\begin{lemma}[Section 4.6 in \cite{Mat}]\label{lem:partial-coloring-lemma}
		Let $(V, C)$ be a set system on $n$ elements, and let a number $\Delta_S \geq 2\sqrt{|S|}$ be given for each set $S\in C$. If
		\begin{equation}\sum_{S\in C:S\ne \emptyset} \exp\left(-\frac{\Delta_S^2}{4|S|}\right) \leq \frac{n}{50},
		\end{equation}
		then there is a partial coloring $\chi$ that assigns $\pm 1$ to at least $n/10$ variables (and $0$ to the rest) satisfying $|\chi(S)| \leq \Delta_S$ for each $S\in C$.
	\end{lemma}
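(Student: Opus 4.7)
The plan is to follow the standard partial-coloring/entropy method of Beck, Spencer, and Matou\v{s}ek. I would start with a uniformly random full coloring $\chi : V \to \{-1,+1\}$ and, for each set $S \in C$, form the rounded statistic
\[
\tilde\chi(S) := \left\lfloor \chi(S)/\Delta_S \right\rfloor,
\]
so that whenever two colorings $\chi_1,\chi_2$ satisfy $\tilde\chi_1(S) = \tilde\chi_2(S)$, we automatically have $|\chi_1(S) - \chi_2(S)| \leq \Delta_S$. The game is therefore to find two full colorings with equal rounded statistics across all $S \in C$ and whose Hamming distance is at least $n/10$; their scaled difference $(\chi_1-\chi_2)/2$ will be the desired partial coloring.

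The key technical estimate is the per-set entropy bound
\[
H(\tilde\chi(S)) \leq K \exp(-\Delta_S^2/(4|S|))
\]
for some absolute constant $K$, valid under the hypothesis $\Delta_S \geq 2\sqrt{|S|}$. This follows from the fact that $\chi(S)$ is a sum of $|S|$ independent $\pm 1$ variables: Hoeffding's inequality controls the tail probabilities $\Pr[|\tilde\chi(S)| \geq k]$ by $e^{-\Omega(k^2 \Delta_S^2/|S|)}$, and a direct evaluation of $-\sum_k p_k \log_2 p_k$ (noting that $p_0$ is very close to $1$) yields the claimed bound after summing a geometric-type series. Subadditivity of joint entropy together with the hypothesis then gives
\[
H\bigl((\tilde\chi(S))_{S \in C}\bigr) \leq K \sum_{S \in C}\exp(-\Delta_S^2/(4|S|)) \leq Kn/50.
\]
A standard pigeonhole argument (if $H(Y) \le h$ then some level of $Y$ has probability at least $2^{-h}$) produces a value $v^* = (v^*_S)_{S \in C}$ such that the fiber $F := \{\chi : \tilde\chi(S) = v^*_S \text{ for all } S\}$ has $|F| \geq 2^{(1-\alpha)n}$ for a small constant $\alpha$ depending on $K$.

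To finish, I would compare $|F|$ to the size of a Hamming ball of radius $n/10$ in $\{-1,+1\}^V$, which is bounded by $2^{n H_2(1/10)}$ with $H_2$ the binary entropy function and $H_2(1/10) \approx 0.469$. Provided $\alpha < 1 - H_2(1/10)$, $F$ cannot be contained in any such ball, so it contains two colorings $\chi_1, \chi_2$ with Hamming distance at least $n/10$. Setting $\psi := (\chi_1 - \chi_2)/2 \in \{-1,0,1\}^V$ gives a partial coloring nonzero on at least $n/10$ elements, and $|\psi(S)| = |\chi_1(S) - \chi_2(S)|/2 \leq \Delta_S/2 \leq \Delta_S$ as needed. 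The main obstacle is the calibration of constants: the $1/50$ in the hypothesis, the $1/4$ in the entropy exponent, the $2$ in $\Delta_S \geq 2\sqrt{|S|}$, and the final $1/10$ support threshold must all mesh so that $K n/50$ stays safely below $(1-H_2(1/10))n$. This is routine but requires careful explicit tracking through Hoeffding's bound and the series $\sum_{k\ge 1} e^{-c k^2}$ when estimating the per-set entropy.
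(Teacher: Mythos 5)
Your overall plan is the right one—this is the Beck--Spencer entropy method, which is exactly what Matou\v{s}ek's Section~4.6 does—but there is a real error in the choice of rounding that breaks the key entropy estimate. With $\tilde\chi(S) := \lfloor \chi(S)/\Delta_S\rfloor$, the random variable $\chi(S)$ is symmetric about $0$, so the two buckets adjacent to the origin carry essentially equal mass: $p_0 = \Pr[0 \le \chi(S) < \Delta_S]$ and $p_{-1} = \Pr[-\Delta_S \le \chi(S) < 0]$ are each close to $1/2$ when $\Delta_S$ dominates $\sqrt{|S|}$. Consequently $H(\tilde\chi(S))$ is close to $1$ bit, not $O\!\left(e^{-\Delta_S^2/(4|S|)}\right)$, and your parenthetical "noting that $p_0$ is very close to $1$" is simply false for this rounding. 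Since the number of sets $|C|$ is not controlled by the hypothesis (it may be exponential in $n$), a constant entropy per set ruins the subadditivity bound and the pigeonhole step.

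The correct rounding centers the cells at the origin so that the $0$-bucket really does absorb almost all the mass. For instance, take $\tilde\chi(S)$ to be the integer $k$ with $(2k-1)\Delta_S \le \chi(S) < (2k+1)\Delta_S$. Then Chernoff gives $p_0 \ge 1 - 2e^{-\Delta_S^2/(2|S|)}$ and, for $k\ge 1$, $p_{\pm k} \le e^{-(2k-1)^2\Delta_S^2/(2|S|)}$; with $\lambda := \Delta_S/\sqrt{|S|} \ge 2$ one then checks $H(\tilde\chi(S)) \le K e^{-\lambda^2/4}$ for an absolute constant $K \le 10$, which is precisely the function $g(\lambda)$ appearing in the paper's appendix (Lemma~\ref{lem:partial-coloring-lemma-full}). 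Matching buckets still forces $|\chi_1(S) - \chi_2(S)| < 2\Delta_S$, hence $|\psi(S)| < \Delta_S$, so the conclusion is unchanged. With that fix, the rest of your argument---subadditivity, the pigeonhole giving a fiber of size $2^{(1-\alpha)n}$, and the Hamming-ball comparison using $H_2(1/10) < 1 - \alpha$---is correct and is essentially the cited proof.
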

	
	The following lemma shows that there is a partial coloring of a constant fraction of any subset $X$ of $\Zn$ such that modular arithmetic progressions have low discrepancy. The proof utilizes the previous lemma applied to a special family of intersections of $X$ with modular arithmetic progressions. Each set in this special family has size a power of $2$. We show that any set which is an intersection of $X$ with a modular arithmetic progression can be written as the union of two sets, and each of them is a set difference of two sets that each has a canonical decomposition into sets of different sizes from this special family. We obtain the lemma by putting these together and using the triangle inequality. 

	\begin{lemma}\label{lem:partial-coloring}
		Let $X\subseteq \Zn$ be a set of size $m > 0$. There exists a partial coloring $\chi: X\to \{-1, 0, 1\}$ that assigns $\pm 1$ to at least $m/10$ elements in $X$ such that
		\[\max_{A\in \mathcal{A}_n}|\chi(A\cap X)| \leq 200m^{\frac12}\left(\log \frac{en}{m}\right)^{\frac{1}{2}}.\]
	\end{lemma}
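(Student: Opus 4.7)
The plan is to apply Lemma~\ref{lem:partial-coloring-lemma} to a specially constructed family $\mathcal{F}$ of subsets of $X$, each of size a power of $2$. For each nonzero $d \in \Zn$, the subgroup $\langle d \rangle \leq \Zn$ partitions $\Zn$ into cosets, each of size $n/\gcd(n,d)$. In each coset $C_i$, order the elements of $X \cap C_i$ by their position when traversing $C_i$ as $a_i, a_i + d, a_i + 2d, \ldots$ for a fixed base point $a_i$; let $x^{(d,i)}_1, \ldots, x^{(d,i)}_{m_i}$ denote this ordering with $m_i := |X \cap C_i|$. For every $j \geq 0$ and $0 \leq k < \lfloor m_i/2^j \rfloor$, include in $\mathcal{F}$ the dyadic block $S_{d,i,j,k} := \{x^{(d,i)}_{k \cdot 2^j + 1}, \ldots, x^{(d,i)}_{(k+1)\cdot 2^j}\}$; this block has size exactly $2^j$ and equals $X \cap I$ for a suitable AP $I$ of common difference $d$.

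For the decomposition claim, fix an AP $A \in \CA_n$ of common difference $d$; then $A$ lies in a single coset $C_i$, and in the $d$-ordering of $X \cap C_i$ the set $A \cap X$ corresponds to a contiguous (possibly cyclically wrapping) range of positions. Splitting at the wrap point gives $A \cap X = U_1 \cup U_2$ with each $U_\ell$ a non-wrapping interval $[p_\ell, q_\ell]$ in the ordering. Each such interval equals the set difference $[1, q_\ell] \setminus [1, p_\ell - 1]$ of two prefixes, and each prefix $[1, x]$ decomposes canonically, via the binary expansion of $x$, into at most $\lfloor \log_2 m \rfloor + 1$ blocks from $\mathcal{F}$ of pairwise distinct power-of-$2$ sizes.

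For the application of Lemma~\ref{lem:partial-coloring-lemma}, set $\Delta_S := c\sqrt{|S|\log(en/|S|)}$ for an absolute constant $c$ (e.g.\ $c = 5$); the hypothesis $\Delta_S \geq 2\sqrt{|S|}$ holds since $|S| \leq m \leq n$ gives $\log(en/|S|) \geq 1$. For the entropy sum, fix $d$ and use the geometric series bound $\sum_{j=0}^{\lfloor \log_2 m_i \rfloor} 2^{j(c^2/4 - 1)} = O(m_i^{c^2/4 - 1})$ (valid since $c^2/4 > 1$) together with the elementary inequality $\sum_i m_i^{c^2/4} \leq (\sum_i m_i)^{c^2/4} = m^{c^2/4}$ (valid for exponent $\geq 1$). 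This yields
\[\sum_{i,j,k} \exp\!\left(-\frac{\Delta_{S_{d,i,j,k}}^2}{4|S_{d,i,j,k}|}\right) = \sum_{i,j} \left\lfloor \frac{m_i}{2^j} \right\rfloor \left(\frac{2^j}{en}\right)^{c^2/4} = O\!\left(\left(\frac{m}{en}\right)^{c^2/4}\right),\]
and summing over $d \in \Zn$ yields $O\bigl(n(m/en)^{c^2/4}\bigr) \leq m/50$ for $c = 5$.

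Finally, combining the decomposition with the triangle inequality, every AP $A$ satisfies $|\chi(A \cap X)| \leq 4 \sum_j \Delta_{S_j}$ with $S_j$ ranging over dyadic blocks of distinct sizes $2^j$ appearing in the decomposition. Substituting $j = \lfloor \log_2 m \rfloor - i$, the key calculation is
\[\sum_{j=0}^{\lfloor \log_2 m \rfloor} 2^{j/2}\sqrt{\log(en/2^j)} \leq \sqrt{m}\sum_{i \geq 0} 2^{-i/2}\Bigl(\sqrt{\log(en/m)} + \sqrt{i\log 2}\Bigr) = O\!\left(\sqrt{m\log(en/m)}\right),\]
so $|\chi(A \cap X)| \leq 200\,m^{1/2}(\log(en/m))^{1/2}$ after tracking constants. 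The main subtlety lies in the $|S|$-dependent factor $\sqrt{\log(en/|S|)}$ in $\Delta_S$: a uniform choice like $c\sqrt{|S|\log(en/m)}$ would fail because the extra factor of $n$ in the entropy sum (from summing over all common differences, compared with the integer case) forces $c$ to depend on $n$; the $|S|$-dependent factor instead causes the entropy sum to effectively telescope across dyadic scales, allowing a universal constant $c$ to suffice.
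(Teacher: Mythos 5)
Your proposal is correct and follows essentially the same approach as the paper's proof: for each common difference $d$ you build a dyadic family of blocks inside each coset of $\langle d\rangle$, reduce a general $A\cap X$ to at most four prefix-decompositions (two pieces at the wrap point, each a difference of two prefixes), take $\Delta_S\propto\sqrt{|S|\log(en/|S|)}$, and verify the entropy sum by exploiting that the geometric series in $j$ is dominated by its top term together with $\sum_i m_i^{p}\le(\sum_i m_i)^p$ for $p\ge1$. The paper packages the wrap-point split as an explicit preliminary reduction of $\CA_n$ to a class $C_1$ of non-wrapping progressions, while you handle the cyclic wrap directly, but this is a cosmetic difference; the key quantitative steps (the choice of $\Delta_S$, the entropy check yielding an absolute constant, and the final geometric-series estimate $\sum_j 2^{j/2}\sqrt{\log(en/2^j)}=O(\sqrt{m\log(en/m)})$) coincide.
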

	\begin{proof}

		Let $\AP_n(a, d, i, j) := \{a+kd: i\leq k \leq j\}$ where $a, d\in \Zn$ and $i, j\in \Z$. By definition, 
		\[\CA_n = \left\{\AP_n(a, d, 0, l-1): a, d\in \Zn, 0\leq l \leq \frac{n}{\gcd(n, d)}\right\}.\] 
		
		Consider the following decomposition of sets in $\CA_n$. Let $$C_1 = \left\{\AP_n(a, d, i, j): 1\leq d < n, 0\leq a < \gcd(n, d), 0\leq i\leq j < \frac{n}{\gcd(n, d)}\right\}.$$ Now we show that any set $A\in \CA_n$ can be written as the disjoint union of at most two sets in $C_1$. For any set $A = \AP_n(a, d, 0, l-1)$ with $a, d\in \Zn$ and $0\leq l \leq \frac{n}{\gcd(n, d)}$, we may assume that $l > 0$, or otherwise it is an empty set. If $d = 0$, then the set is the singleton $\{a\}$, which is also $\AP_n(0, 1, a, a)\in C_1$ by choosing $i, j$ to be the integer representative of $a$ in the range $[0, n)$. Now we may assume $1 \leq d  < n$. Note that $\{kd: 0\leq k < \frac{n}{\gcd(n, d)}\}$ splits $\Zn$ into $\frac{n}{\gcd(n, d)}$ equal-sized intervals, so there is some $0\leq k < \frac{n}{\gcd(n, d)}$ such that $0\leq a - kd < \gcd(n, d)$. Define $a' = a - kd$, and we have
		\[\AP_n(a, d, 0, l - 1) = \AP_n(a', d, k, k + l - 1).\]
		Moreover we have $k < \frac{n}{\gcd(n, d)}$ and $l -1 < \frac{n}{\gcd(n, d)}$, so $0\leq k+l-1 < 2\frac{n}{\gcd(n, d)}$. If $k+l-1 < \frac{n}{\gcd(n, d)}$, then $\AP_n(a', d, k, k+l-1)\in C_1$. If $k + l-1\geq \frac{n}{\gcd(n, d)}$, then we may write
		\[\AP_n(a', d, k, k+l-1) = \AP_n\left(a', d, k, \frac{n}{\gcd(n, d)}-1\right) \cup \AP_n\left(a', d, 0, k+l -1- \frac{n}{\gcd(n, d)}\right).  \]
		
		Hence we know that for any partial coloring $\chi: \Zn \to \{-1, 0, 1\}$, we always have
		\begin{equation}\label{eqn:decomp-1}
		\max_{A\in \CA_n}|\chi(A\cap X)| \leq 2 \max_{A\in C_1}|\chi(A\cap X)|.
		\end{equation}
		
		Now we study the sets in $C_X := \{A\cap X: A\in C_1\}$. By definition we have
		\begin{equation}\label{eqn:decomp-1.5}
		\max_{A\in C_1}|\chi(A\cap X)| = \max_{B\in C_X}|\chi(B)|.
		\end{equation}
		For each $1\leq d < n$ and $0\leq a < \gcd(n, d)$, we define $A_{d, a} = \left\{a+kd: 0\leq k < \frac{n}{\gcd(d, n)}\right\}$, and define $X_{d, a} = A_{d, a}\cap X$. Since each element $x$ in $X_{d, a}$ is associated to an integer $0\leq k < \frac{n}{\gcd(d, n)}$ via the relation $x = a+kd$, we may order elements in $X_{d, a}$ in ascending order of $k$. We write $X_{d, a} = \left\{x_{d, a}^{(1)}, \dots, x_{d, a}^{(l_{d, a})}\right\}$, where $l_{d, a} = |X_{d, a}|$. For integers $i$ and $j$, we define $X_{d, a}(i, j) = \left\{x_{d, a}^{(t)}: i \leq t \leq j\right\}$.

		For each $d\in \Zn$, because each element of $\Zn$ is in exactly one $A_{d, a}$ for $0\leq a < \gcd(d, n)$, we have
		\begin{equation}\label{eqn:sum-of-l}\sum_{a = 0}^{\gcd(d, n) - 1}l_{d, a} = \sum_{a = 0}^{\gcd(d, n) - 1}|X\cap A_{d, a}| = |X| = m.\end{equation}
		
		For $A = \AP_n(a, d, i, j)$, if we take $i'$ to be the smallest index such that $x_{d, a}^{(i')} = a + k_1d$ with $k_1 \geq i$ (or $l_{d, a}+1$ if no such index with $0\leq k_1 < \frac{n}{\gcd(d, n)}$ exists), and $j'$ to be the largest index such that $x_{d, a}^{(j')} = a+k_2d$ with $k_2 \leq j$ (or $0$ if no such index with $0\leq k_2 < \frac{n}{\gcd(d, n)}$ exists), then we have $A\cap X = X_{d, a}(i', j')$. Hence each nonempty set in $C_X$ is of the form $X_{d, a}(i, j)$ with $1\leq i \leq j \leq l_{d, a}$. Therefore we may write
		\[C_X =  \{X_{d, a}(i, j): 1\leq d < n, 0\leq a < \gcd(n, d), 1\leq i\leq j \leq l_{d, a}\} \cup \{\emptyset\}.\]
		
		Now we further decompose sets in $C_X$ into canonical sets. Formally,
		\begin{equation}\label{eqn:define-C2}
		    C_2 := \left\{X_{d, a}(1+(t-1)2^i, t2^i): 1\leq d < n, 0\leq a < \gcd(n, d), 0\leq i, 1\leq t \leq \left\lfloor\frac{l_{d, a}}{2^i}\right\rfloor\right\}.
		\end{equation}
		Notice that each set $S\in C_X$ can be written as $U\setminus V$ with $V\subseteq U$ where both $U$ and $V$ can be written as the disjoint union of sets in $C_2$ of different sizes. This is trivially true for the empty set. For $S = X_{d, a}(i, j)$, we may take $U = X_{d, a}(1, j)$ and $V = X_{d, a}(1, i-1)$. If we write $j$ in binary form $2^{b_1} + 2^{b_2} + \dots + 2^{b_s}$ for $b_1 > b_2 > \dots > b_s$, then $U$ is the disjoint union of sets of the form $X_{d, a}(1+\sum_{k = 1}^{t-1}2^{b_k},  \sum_{k = 1}^{t}2^{b_k})$ for $t = 1, \dots, s$. We can decompose $V$ similarly.
		
		Note that sets in $C_2$ are all of size $2^i$ for $2^i \leq m$. If $\chi: X\to \{-1, 0, 1\}$ is such that for all $0\leq i\leq \log_2 m$ we have $\chi(S) \leq \Delta_i$ for any set $S\in C_2$ of size $2^i$, then by using the decomposition property above we have
		\begin{equation}\label{eqn:decomp-2}
		\max_{B\in C_X}|\chi(B)| \leq 2\sum_{0\leq i \leq \log_2 m} \Delta_i.
		\end{equation}
		
		For each $i$, we upper bound the number of sets in $C_2$ of size $2^i$ as following. For each $1\leq d < n$ and $0\leq a < \gcd(d, n)$, the number of choices of $t$ in \eqref{eqn:define-C2} is $\lfloor{l_{d, a}}/{2^i}\rfloor$. By \eqref{eqn:sum-of-l} the number of such sets is at most
		\begin{equation}\label{eqn:bound-num-of-sets}\sum_{d=1}^{n-1} \sum_{a = 0}^{\gcd(d, n) - 1} \left\lfloor\frac{l_{d, a}}{2^i}\right\rfloor \leq \sum_{d=1}^{n-1} \sum_{a = 0}^{\gcd(d, n) - 1} \frac{l_{d, a}}{2^i} = \frac{(n-1)m}{2^i}.\end{equation}
		
		Now we define $b(0) = 0$ and for $0 < s \leq n$,
		\begin{equation}\label{eqn:mod-n-prop-def-b}b(s) = 5\sqrt{s} \left(\log \frac{en}{s}\right)^{1/2}.\end{equation}
		From the definition, we know that $b(s) \geq 2\sqrt{s}$ for all $s\in \N$. We want to show that there exists a partial coloring $\chi$ that colors at least $m/10$ elements in $X$, and for any $S\in C_2$,
		\begin{equation}\label{eqn:mod-n-prop-target}|\chi(S)| \leq b(|S|).\end{equation}
		
		In order to apply Lemma~\ref{lem:partial-coloring-lemma} to $X$ and $C = C_2$, it remains to check that
		\begin{equation}\label{eqn:mod-n-prop-lemma-cond}\sum_{S\in C_2} \exp\left(-\frac{b(|S|)^2}{4|S|}\right) \leq m/50.\end{equation}
		From the definition of $b(\cdot)$ in \eqref{eqn:mod-n-prop-def-b}, we know that
		\begin{equation}\label{eqn:mod-n-prop-lemma-verify}\sum_{S\in C_2} \exp\left(-\frac{b(|S|)^2}{4|S|}\right) = \sum_{S\in C_2}e^{-\frac{25}{4}}\cdot \left(\frac{|S|}{n}\right)^{\frac{25}{4}} \leq e^{-6}\cdot\sum_{S\in C_2}\left(\frac{|S|}{n}\right)^2.\end{equation}
		Using the bound in \eqref{eqn:bound-num-of-sets}, we have
		\[\begin{split}
		\sum_{S\in C_2} \left(\frac{|S|}{n}\right)^{2} & \leq \sum_{0\leq i \leq \log_2 m} \frac{(n-1)m}{2^i} \left(\frac{2^i}{n}\right)^2 \leq 2m.
		\end{split}\]
		%
		Put this into \eqref{eqn:mod-n-prop-lemma-verify}. We get
		\[\sum_{S\in C_2} \exp\left(-\frac{b(|S|)^2}{4|S|}\right) \leq e^{-6}\cdot 2m \leq m/50.\]
		This shows that \eqref{eqn:mod-n-prop-lemma-cond} holds. By Lemma~\ref{lem:partial-coloring-lemma}, we conclude that there exists a partial coloring $\chi: X\to \{-1, 0, 1\}$ that assigns $\pm 1$ to at least $m/10$ elements, and \eqref{eqn:mod-n-prop-target} holds for any $S\in C_2$. Thus in \eqref{eqn:decomp-2} we may take $\Delta_i = b(2^i)$ for $0\leq i \leq \log_2 m$.
		
		It remains to show that $\chi$ satisfies the desired property. By \eqref{eqn:decomp-1}, \eqref{eqn:decomp-1.5}, and \eqref{eqn:decomp-2},
		\[\max_{A\in \mathcal{A}_n}|\chi(A\cap X)| \leq 4\sum_{0\leq i\leq \log_2 m}b(2^i) = 20 \sum_{i = 0}^{\lfloor{\log_2 m}\rfloor} 2^{i/2}\left( \log \frac{en}{2^i}\right)^{\frac12} \leq 200\cdot \sqrt{m}\left(\log \frac{en}{m}\right)^{\frac12}.\]
		To obtain the last inequality, observe that the sum is roughly a geometric series with ratio $2^{1/2}$ (although there is an extra logarithmic factor that slightly complicates this) and can be bounded by a constant factor times the largest term. A careful analysis gives the claimed bound. 
		
		Hence the partial coloring $\chi$ satisfies the desired inequality.
	\end{proof}
	
	
	\begin{corollary}\label{cor:subset-coloring}
		Let $X\subseteq \Zn$ be a set of size $m > 0$. There exists an absolute constant $c$ such that there is a coloring $\chi: X\to \{-1, 1\}$ satisfying
		\[\max_{A\in \mathcal{A}_n}|\chi(A\cap X)| \leq cm^{\frac12}\left(\log \frac{en}{m}\right)^{\frac 12}.\]
		In particular, for $X= \Zn$, we have 
		\[\disc(\mathcal{A}_n) = O(\sqrt{n}). \]
	\end{corollary}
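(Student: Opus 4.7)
The plan is a standard iterative halving argument. Starting from Lemma~\ref{lem:partial-coloring}, I would repeatedly apply it to the still-uncolored portion of $X$: each application colors a constant fraction of what remains while keeping the partial discrepancy on arithmetic progressions small. Concatenating the partial colorings yields a full $\pm 1$ coloring whose total discrepancy is controlled by the sum of the partial bounds.

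Concretely, set $X_0 := X$ and $m_0 := m$, and for $i \ge 0$ apply Lemma~\ref{lem:partial-coloring} to $X_i$ to obtain a partial coloring $\chi_i : X_i \to \{-1, 0, 1\}$ that assigns $\pm 1$ to at least $m_i/10$ elements of $X_i$ and satisfies
\[\max_{A\in\CA_n}|\chi_i(A\cap X_i)| \le 200\, m_i^{1/2} \left(\log \frac{en}{m_i}\right)^{1/2}.\]
Let $X_{i+1}\subseteq X_i$ be the set of elements still assigned $0$, so $m_{i+1} := |X_{i+1}| \le (9/10)\, m_i$. Since the $m_i$ are nonnegative integers shrinking geometrically, the process terminates in $O(\log m)$ steps with every element colored. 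Let $\chi: X \to \{-1, 1\}$ be the resulting full coloring.

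The heart of the argument is the summation. For any $A \in \CA_n$, since the $\pm 1$ colorings on the layers $X_i \setminus X_{i+1}$ partition $X$, one has $\chi(A \cap X) = \sum_{i\ge 0} \chi_i(A \cap X_i)$, so by the triangle inequality $|\chi(A\cap X)| \le \sum_{i\ge 0} |\chi_i(A\cap X_i)|$. Using $m_i \le (9/10)^i m$ together with $\log(en/m_i) \le \log(en/m) + i\log(10/9)$ and $\sqrt{a+b}\le\sqrt{a}+\sqrt{b}$, the $i$-th summand is bounded by a constant multiple of $(9/10)^{i/2} m^{1/2}\bigl((\log(en/m))^{1/2} + \sqrt{i}\bigr)$. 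The geometric decay in the prefactor overwhelms the $\sqrt{i}$ growth, so the series converges and the total is on the order of the $i = 0$ term, yielding the claimed bound $O\bigl(m^{1/2}(\log(en/m))^{1/2}\bigr)$. The ``in particular'' statement follows immediately by taking $X = \Zn$ and $m = n$, which makes $\log(en/m) = 1$ constant.

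There is no substantive obstacle here, since Lemma~\ref{lem:partial-coloring} has already done the work; the only thing requiring care is confirming convergence of the telescoping sum of partial discrepancies, which succeeds because the geometric decay in $m_i^{1/2}$ beats the at-most-logarithmic growth in $\log(en/m_i)$.
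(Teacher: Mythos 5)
Your plan — iterate Lemma~\ref{lem:partial-coloring} on the uncolored remainder, sum the partial discrepancies via the triangle inequality, and exploit the geometric decay of $m_i$ — is exactly the paper's argument, and the final bound you state for the $i$-th summand is correct. However, there is a sign error in the intermediate step you offer as justification. From $m_i \le (9/10)^i m$ one gets $en/m_i \ge (10/9)^i\,(en/m)$, hence
\[
\log\frac{en}{m_i} \;\ge\; \log\frac{en}{m} + i\log\tfrac{10}{9},
\]
the opposite direction of what you wrote. Bounding $m_i^{1/2}$ and $(\log(en/m_i))^{1/2}$ separately cannot work, since the two factors move in opposite directions as $m_i$ shrinks; in particular, if $m_i$ drops to $1$ quickly, $\log(en/m_i)$ can be as large as $\log(en)$ regardless of $i$.

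The correct justification, and the one the paper uses, is that the \emph{product} $x \mapsto x^{1/2}\bigl(\log(en/x)\bigr)^{1/2}$ is monotonically increasing on $(0,n]$. Consequently
\[
m_i^{1/2}\Bigl(\log\frac{en}{m_i}\Bigr)^{1/2}
\;\le\;
\bigl((9/10)^i m\bigr)^{1/2}\Bigl(\log\frac{en}{(9/10)^i m}\Bigr)^{1/2}
= (9/10)^{i/2}\, m^{1/2}\Bigl(\log\frac{en}{m} + i\log\tfrac{10}{9}\Bigr)^{1/2},
\]
after which your application of $\sqrt{a+b}\le\sqrt a + \sqrt b$ and the geometric-beats-$\sqrt{i}$ observation close the argument exactly as you describe. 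So the gap is real but local, and the fix is a single observation about monotonicity.
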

	\begin{proof}
		The main idea is to iteratively apply Lemma~\ref{lem:partial-coloring} to the set of uncolored elements of $X$ until all elements of $X$ are colored.
		
		Start with $X_0 = X$. For each $i\geq 0$, we apply Lemma~\ref{lem:partial-coloring} to $X_i$ to get a partial coloring $\chi_i: X_i \to \{-1, 0, 1\}$, and we let $X_{i+1} = \chi_i^{-1}(0) \subseteq X_i$ to be the set of uncolored elements in $i$-th iteration. We continue this process until the $k$-th iteration where all elements are colored (i.e. $X_{k+1} = \emptyset$). Let $\chi$ be the final coloring given by $\chi(x_i) = \chi_i(x_i)$ if $x_i\in X_i\setminus X_{i+1}$ for $0\leq i\leq k$. We know that $|X_i| \leq 0.9|X_{i-1}|$ for all $1\leq i\leq k$, so $|X_i| \leq (0.9)^i|X_0| = (0.9)^im$. Moreover, we know that for any $A\in \mathcal A_n$, noting that $x^{1/2}\cdot \left(\log \frac{en}{x}\right)^{\frac 12}$ is monotonically increasing for real number $x \in (0, n]$,
		\[
		\begin{split}
		|\chi(A\cap X)| & = \left|\sum_{i=0}^k \chi_i(X_i\cap A)\right| \leq \sum_{i=0}^k \left|\chi_i(X_i\cap A)\right| < \sum_{i=0}^k 200\cdot |X_i|^{\frac12}\left(\log\frac{n}{e|X_i|}\right)^{\frac 12} \\
		& \leq 200m^{\frac12}\cdot \sum_{i=0}^k (0.9)^{\frac i2} \left(\log\frac{en}{(0.9)^im}\right)^{\frac 12} < cm^{\frac12}\left(\log\frac{en}{m}\right)^\frac 12
		\end{split}\]
		for some appropriate choice of the absolute constant $c$.
	\end{proof}

	\section{Upper Bounds}\label{section upper}
	By Corollary~\ref{cor:subset-coloring}, we have $\disc(\CA_n) = O(\sqrt{n})$. In this section, we prove the following upper bound on the same quantity, which shows that the previous bound is not always tight, and that $\disc(\CA_n)$ depends on the factorization of $n$. Recall that $\omega(r)$ denotes the number of distinct prime factors of $r$.

	\begin{theorem}\label{thm:bound-from-distribution}
		There exists an absolute constant $c> 0$ such that for any positive integer $n$, we have 
		\[\disc(\CA_n) \leq \min_{r|n}\left(\frac{n}{r} + c\sqrt{r}\cdot 2^{\omega(r)}\right).\]
	\end{theorem}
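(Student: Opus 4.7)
The plan is to lift a carefully designed coloring from $\Z_r$ to $\Z_n$. Fix the divisor $r \mid n$, and for a coloring $\chi_r : \Z_r \to \{\pm 1\}$ to be specified, define $\chi : \Z_n \to \{\pm 1\}$ by $\chi(x) := \chi_r(x \bmod r)$. For an arbitrary AP $A = \{a + kd : 0 \le k < l\} \in \CA_n$, set $g := \gcd(r,d)$ and $P := r/g$. The residues $(a+kd) \bmod r$ form a periodic sequence in $\Z_r$ of period $P$, tracing the coset $F := (a \bmod r) + g\Z_r$ (of size $P$) once per period. Writing $l = qP + s$ with $0 \le s < P$, this yields the clean decomposition
\[\chi(A) \;=\; q\,\chi_r(F) + \chi_r(T),\]
where $T \in \CA_r$ is an AP of length $s$. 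The key numerical fact is $q \le n/r$: using $l \le n/\gcd(n,d)$ together with $g = \gcd(r,d) \mid \gcd(n,d)$ (since $r \mid n$), one has $qP \le l$, hence $q \le (l/P) \le (n/\gcd(n,d)) \cdot (g/r) \le n/r$.

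It therefore suffices to produce $\chi_r$ satisfying two properties: (i) $|\chi_r(F)| \le 1$ for every coset $F$ of every subgroup of $\Z_r$, and (ii) $\max_{T \in \CA_r}|\chi_r(T)| \le c\sqrt{r}\cdot 2^{\omega(r)}$. Then the decomposition gives $|\chi(A)| \le q + c\sqrt{r}\cdot 2^{\omega(r)} \le n/r + c\sqrt{r}\cdot 2^{\omega(r)}$. To build such a $\chi_r$ I would use the Chinese Remainder Theorem: factor $r = \prod_i p_i^{a_i}$, identify $\Z_r \cong \prod_i \Z_{p_i^{a_i}}$, and take the product
\[\chi_r(x) \;:=\; \prod_i \chi_{p_i^{a_i}}(x \bmod p_i^{a_i}),\]
where each factor $\chi_{p^a} : \Z_{p^a} \to \{\pm 1\}$ is a prime-power coloring satisfying the analogues (a) $|\chi_{p^a}(\text{coset})| \le 1$ on every coset of every subgroup of $\Z_{p^a}$, and (b) AP discrepancy $O(\sqrt{p^a})$. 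A natural explicit construction is the \emph{digit product} $\chi_{p^a}(x) := \prod_{j=0}^{a-1} f_j(x_j)$, where $x = \sum_j x_j p^j$ is the $p$-adic expansion and each $f_j : \{0,\dots,p-1\} \to \{\pm 1\}$ is chosen with $|\sum_y f_j(y)| \le 1$; property (a) is immediate because a coset of $p^v\Z_{p^a}$ fixes the low $v$ digits of $x$ and lets the rest vary. Since subgroups of $\Z_r$ CRT-decompose as products of subgroups of $\Z_{p_i^{a_i}}$, property (i) of $\chi_r$ follows at once from (a).

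The main obstacle is verifying property (ii) for the CRT product, which I would tackle by induction on $\omega(r)$. The base case $\omega(r) = 1$ reduces to bounding the AP discrepancy of the digit-product construction on $\Z_{p^a}$; this can be established via Corollary~\ref{cor:subset-coloring} applied to $\Z_{p^a}$, or directly by a Fourier/character sum argument exploiting the multiplicative structure. For the inductive step, suppose $\chi_{r'}$ with $\omega(r') = t-1$ satisfies (ii) and $\gcd(r',p) = 1$, and consider $\chi_r := \chi_{r'} \otimes \chi_{p^a}$ on $\Z_r = \Z_{r'p^a}$. For an AP $T = \{a+kd : 0 \le k < l\}$ in $\Z_r$, expand
\[\chi_r(T) \;=\; \sum_{k=0}^{l-1} \chi_{r'}(a'+kd')\,\chi_{p^a}(a''+kd''),\]
and partition $k$ by its residue modulo $P'' := p^a/\gcd(p^a, d)$, the period of the second factor. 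On each residue class $\chi_{p^a}$ is a constant $c_{k_0}$, whose sum over $k_0$ equals $\chi_{p^a}$ of a coset and is therefore bounded by $1$ via (a); the inner sum is $\chi_{r'}$ evaluated on an AP $T_{k_0}$ of $\Z_{r'}$. A careful Cauchy-Schwarz argument, exploiting both the sign cancellation among the $c_{k_0}$'s and the fact that the starting points of the $T_{k_0}$'s themselves trace an arithmetic progression in $\Z_{r'}$ (so that Parseval-type bounds on $\sum_{k_0}\chi_{r'}(T_{k_0})^2$ are available), should limit the multiplicative loss to at most $2\sqrt{p^a}$, closing the induction. This is the analytical heart of the proof: the naive bound $|\chi_r(T)| \le P''\cdot (\text{AP discrepancy of }\chi_{r'})$ is too weak, and one must genuinely use property (a) of $\chi_{p^a}$ to obtain cancellation in the sum over $k_0$.
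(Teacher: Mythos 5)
Your reduction framework is correct and essentially identical to the paper's. The lift $\chi(x) := \chi_r(x \bmod r)$ with the decomposition $\chi(A) = q\,\chi_r(F) + \chi_r(T)$ and the bound $q \le n/r$ is Lemma~\ref{lem:copies-of-same-coloring} in disguise (your ``cosets of subgroups of $\Z_r$'' are exactly the paper's congruence classes $\CA_r^0$), and your requirements (i) and (ii) on $\chi_r$ are exactly what the paper's Lemma~\ref{lem:color-with-bounds-for-cong-classes} delivers. So the outer layer of your argument is fine.

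The gap is in the construction of $\chi_r$: the digit-product coloring does not have $O(\sqrt{p^a})$ discrepancy over $\CA_{p^a}$, so your property (ii) fails already in the base case $\omega(r)=1$. For $p=2$, the only $f_j:\{0,1\}\to\{\pm 1\}$ with $|\sum_y f_j(y)|\le 1$ have $\sum_y f_j(y)=0$, and taking all $f_j$ equal gives $\chi_{2^a}(x) = (-1)^{s_2(x)}$, the Thue--Morse sequence. By Newman's theorem (1969), $\sum_{0\le k < N,\ 3\mid k}(-1)^{s_2(k)}$ has order $N^{\log_4 3}$ with $\log_4 3 \approx 0.79 > 1/2$, so the AP discrepancy of this coloring on $\Z_{2^a}$ is $\gg (2^a)^{\log_4 3}$, far above $c\sqrt{2^a}$. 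Varying the $f_j$ across positions only twists signs and does not evade this phenomenon. Your suggestion to ``establish the base case via Corollary~\ref{cor:subset-coloring}'' is also a confusion: that corollary asserts the \emph{existence} of a low-discrepancy coloring; it gives no information about a specific explicit coloring such as the digit product, and the coloring it provides has no reason to satisfy your property (a) on cosets. The inductive step is likewise only sketched, but that is moot once the base case fails. The paper takes a genuinely different route precisely because of this obstruction: it partitions $\Z_{p^\alpha}$ into dyadic-type intervals $X_i = \{p^{i-1},\dots,p^i-1\}$, applies the entropy-method coloring of Corollary~\ref{cor:subset-coloring} to half of each interval, and extends by the reflection $\chi(s + m/2) = -\chi(s)$; this reflection, rather than any digit structure, is what forces congruence-class sums to cancel while the entropy method controls the AP discrepancy. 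The general case is handled by CRT with $2^{|I|}$ nested reflections (Lemma~\ref{lem:partitioned-coloring-dim-k}), producing the $2^{\omega(r)}$ factor.
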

	
	The proof of Theorem \ref{thm:bound-from-distribution} consists of two steps. The first key step is to establish an upper bound on $\disc(\CA_n)$ using a coloring of $\Z_r$ where $r$ divides $n$ that simultaneously has low discrepancy with respect to two particular families of arithmetic progressions in $\Z_r$. This is achieved by Lemma~\ref{lem:copies-of-same-coloring}. 
	To accomplish this, we need to introduce a special type of arithmetic progression in $\Zn$. Recall that $\CA_n$ is the set of all arithmetic progressions in $\Zn$. We define $\CA_n^0$ to be the set of all congruence classes of $\Zn$. Formally, for any $r, i\in \Zn$, let
	\[C(r, i) := \{x\in \Zn: x\equiv i \Mod r\}\]
	 and
	\[\CA_n^0 := \{C(r, i): r, i\in \Zn\}.\]
	In particular we know that $\CA_n^0 \subseteq \CA_n$. In Lemma~\ref{lem:copies-of-same-coloring} we obtain an upper bound on $\disc(\CA_n)$ if there is an $r$ that divides $n$ and a coloring $\chi$ of $\Z_r$ that has low discrepancy over both $\CA_r$ and $\CA_r^0$.
	
	The second step is to find a coloring of $\Z_r$ which has nearly optimal discrepancy over $\CA_r$ and $\CA_r^{0}$ simultaneously. Finally we will complete the proof by applying Lemma~\ref{lem:copies-of-same-coloring} with this coloring of $\Z_r$.

	\begin{lemma}\label{lem:copies-of-same-coloring}
		Let $n$ be a positive integer and $r$ be a positive factor of $n$. For any $\chi: \Z_r\to \{1, -1\}$,
		\[\disc(\CA_{n}) \leq \max_{A\in \CA_r}|\chi(A)| + \frac{n}{r}\cdot \max_{A_0\in \CA_r^0}|\chi(A_0)|.\]
	\end{lemma}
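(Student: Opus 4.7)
The plan is to construct the coloring of $\Z_n$ by pulling back $\chi$ along the reduction map $\Z_n \to \Z_r$, and then to decompose any AP in $\Z_n$, via its projection to $\Z_r$, into a union of full ``periods'' (which are congruence classes in $\Z_r$) plus one partial period (an honest AP in $\Z_r$).

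Concretely, define $\widetilde{\chi}: \Z_n \to \{1,-1\}$ by $\widetilde{\chi}(x) = \chi(x \bmod r)$; it suffices to bound $|\widetilde{\chi}(A)|$ for an arbitrary $A = \{a + kd : 0 \le k < \ell\} \in \CA_n$. The key observation is that the map $k \mapsto (a+kd) \bmod r$ is periodic in $k$ with period $q := r/\gcd(d \bmod r, r)$, and as $j$ ranges over one full period $\{0, 1, \dots, q-1\}$, the values $(a+jd) \bmod r$ trace out exactly the coset $\{x \in \Z_r : x \equiv a \pmod{\gcd(d,r)}\}$, which is a set in $\CA_r^0$. Writing $\ell = qs + t$ with $0 \le t < q$, I would split
\[
\widetilde{\chi}(A) \;=\; s\cdot \chi(A_0) \;+\; \chi(A_1),
\]
where $A_0 \in \CA_r^0$ is the full-period congruence class described above and $A_1$ is the set traced out by the last $t < q$ values $(a+kd) \bmod r$, which is an arithmetic progression in $\Z_r$ of $t$ distinct elements, hence a member of $\CA_r$. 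The triangle inequality then gives
\[
|\widetilde{\chi}(A)| \;\le\; s\cdot \max_{A_0 \in \CA_r^0}|\chi(A_0)| \;+\; \max_{A' \in \CA_r}|\chi(A')|.
\]

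The only point requiring a small computation is the bound $s \le n/r$. Since $A \in \CA_n$, we have $\ell \le n/\gcd(d,n)$, so
\[
s \;\le\; \frac{\ell}{q} \;=\; \frac{\ell\cdot \gcd(d,r)}{r} \;\le\; \frac{n\cdot \gcd(d,r)}{r\cdot \gcd(d,n)},
\]
and because $r \mid n$ we have $\gcd(d,r) \mid \gcd(d,n)$, so the last fraction is at most $n/r$. Substituting $s \le n/r$ into the previous display yields exactly the claimed bound on $\disc(\CA_n)$. There is no real obstacle here; the mild subtlety is only in the correct definition of the period $q$ and the verification that $s \le n/r$ uses the divisibility hypothesis $r \mid n$ in an essential way (this is why the lemma fails for arbitrary $r$).
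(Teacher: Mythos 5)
Your proposal is correct and follows essentially the same route as the paper: pull back $\chi$ along the reduction map $\Zn \to \Z_r$, write the AP of length $\ell$ in $\Zn$ as $s$ full periods of length $q = r/\gcd(d \bmod r, r)$ (each projecting to a congruence class in $\CA_r^0$) plus a tail of length $t < q$ (projecting to an ordinary AP in $\CA_r$), and bound $s \le n/r$ using $r \mid n$. The paper's $L_0$ and $q$ are your $q$ and $s$, and the inequality $q \le m \cdot \gcd(r,d)/\gcd(n,d) \le m$ in the paper is your computation verbatim.
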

	\begin{proof}
		Let $m = n/r$. For the subgroup $(m) \subseteq \Zn$, we have a quotient map $\tau: \Zn \to \Zn/(m) = \Z_r$ given by $\tau(x)= \overline{x} = x+(m)$. Suppose we are given an arbitrary $\chi: \Z_r \to \{1, -1\}$. Now we define $\chi' = \chi \circ \tau: \Zn\to \{1, -1\}$. It suffices to show
		\begin{equation}\label{eqn:copies-of-same-coloring}\max_{A'\in \CA_{n}} |\chi'(A')| \leq  \max_{A\in \CA_r}|\chi(A)| + m\cdot \max_{A_0\in \CA_r^0}|\chi(A_0)|.\end{equation}
		
		Let $A' = \{a + kd: 0\leq k < L\}$ for some $a, d\in \Z_{n}$ be an arithmetic progression in $\Z_{n}$ of length $L$. We may assume that $L \leq n/{\gcd(d, n)}$ so that there is no repeated element in $A'$.
		
		Let $L_0 = r/\gcd(r, \overline{d})$ where $\overline{d} = \tau(d)$. For any integer $t > 0$, we know that $\{\overline{a} + k\overline{d}: t \leq k < t+L_0\}$ is a set of size $L_0$ in $\CA^0_n$, as it covers each element in the congruence class exactly once. For any $0\leq l < L_0$, as the set $\{\overline{a} + k\overline{d}: t \leq k < t+l\}$ has no repeated element, it is a set of size $l$ in $\CA_n$.
		
		Now we may write $L = qL_0 + r$ for some integers $q$ and $0\leq r < L_0$. We know that
		\[q\leq \frac{L}{L_0} \leq \frac{n/\gcd(d, n)}{r/\gcd(r, \overline{d})} = m\cdot \frac{\gcd(r, d)}{\gcd(n, d)} \leq m.\]
		Applying the triangle inequality, we get
		\[\begin{split}
		|\chi'(A')| & = \left|\sum_{k = 0}^{L-1}\chi'(a+kd)\right| = \left|\sum_{k = 0}^{L-1}\chi(\overline{a}+k\overline{d})\right| =  \left|\sum_{t = 0}^{q-1}\sum_{k = tL_0}^{(t+1)L_0-1}\chi(\overline{a}+k\overline{d}) + \sum_{k = qL_0}^{qL_0 + r-1}\chi(\overline{a}+k\overline{d})\right| \\
		& \leq \sum_{t = 0}^{q-1}\left|\sum_{k = tL_0}^{(t+1)L_0-1}\chi(\overline{a}+k\overline{d})\right| + \left|\sum_{k = qL_0}^{qL_0 + r-1}\chi(\overline{a}+k\overline{d})\right| \\
		& \leq q \cdot \max_{A_0\in \CA_r^0}|\chi(A_0)| + \max_{A\in \CA_r}|\chi(A)| \leq m\cdot \max_{A_0\in \CA_r^0}|\chi(A_0)| + \max_{A\in \CA_r}|\chi(A)|.
		\end{split}\]
		Since the bound is uniform for all $A'\in \CA_{n}$, we conclude that \eqref{eqn:copies-of-same-coloring} holds.
	\end{proof}
	
	After the above deduction, the second step is to find a coloring $\chi: \Zn \to \{1, -1\}$ that has nearly optimal discrepancy over $\CA_n$ and $\CA_n^0$ simultaneously. Formally we have the following statement.
	\begin{lemma}\label{lem:color-with-bounds-for-cong-classes}
		There exists an absolute constant $c > 0$ such that the following holds. Suppose that $n$ is a positive integer with $k$ distinct prime factors. Then there exists $\chi: \Zn \to \{1, -1\}$ such that
		\[\max_{A_0\in \CA_n^0}|\chi(A_0)| \leq 1\quad \AND \quad  \max_{A\in \CA_n}|\chi(A)| \leq c\sqrt{n}\cdot 2^{k}.\]
	\end{lemma}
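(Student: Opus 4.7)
The plan is to construct $\chi$ via the Chinese Remainder Theorem as a product of colorings on the prime-power factors of $n$, and then bound the AP discrepancy by induction on the number $k$ of distinct prime factors. Write $n = n_1\cdots n_k$ with $n_i = p_i^{a_i}$, and identify $\Zn\cong\prod_i\Z_{n_i}$ via CRT. First I would construct, for each $i$, a coloring $\chi_i:\Z_{n_i}\to\{\pm 1\}$ satisfying $|\chi_i(C)|\leq 1$ on every non-trivial coset $C$ of $\Z_{n_i}$ together with AP discrepancy $\max_{A\in\CA_{n_i}}|\chi_i(A)| = O(\sqrt{n_i})$, and then set $\chi(x) = \prod_i\chi_i(x \bmod n_i)$. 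This $\chi_i$ is itself built by induction on the exponent $a_i$: for $a_i=1$ apply Corollary~\ref{cor:subset-coloring} to $\Z_{p_i}$ and flip at most one element to enforce $|\chi_i(\Z_{p_i})|\leq 1$ at constant discrepancy cost; for $a_i\geq 2$ decompose $\Z_{p_i^{a_i}}$ into the $p_i$ cosets of $\langle p_i\rangle$ (each identified AP-preservingly with $\Z_{p_i^{a_i-1}}$), recursively color each using the inductive hypothesis, and glue with top-level signs $\epsilon_0,\ldots,\epsilon_{p_i-1}\in\{\pm 1\}$ chosen via a partial-coloring argument that simultaneously enforces $|\chi_i(\Z_{n_i})|\leq 1$ and controls the AP discrepancy on APs crossing multiple cosets. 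The coset balance of $\chi$ on $\CA_n^0$ is then inherited multiplicatively, since every subgroup $H\leq \Zn$ factors under CRT as $H=\prod_i H_i$, and $\sum_{x\in a+H}\chi(x) = \prod_i\chi_i(a_i+H_i)$, each factor of absolute value at most $1$.

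For the AP discrepancy bound on $\chi$ I would induct on $k$. Writing $\chi_i = \hat\chi_i(0) + \tilde\chi_i$ with $\tilde\chi_i$ mean-zero and $|\hat\chi_i(0)| = |\chi_i(\Z_{n_i})|/n_i \leq 1/n_i$, the product expansion gives
\[
    \chi(A) \;=\; \sum_{S\subseteq [k]}\Bigl(\prod_{i\notin S}\hat\chi_i(0)\Bigr)\sum_{j=0}^{\ell-1}\prod_{i\in S}\tilde\chi_i(a_i+jd_i),
\]
a sum of $2^k$ contributions indexed by subsets $S$. For $S\subsetneq [k]$ the inner sum further expands by inclusion-exclusion into a signed combination of AP sums $\chi_R(A|_R)$ of the CRT subproducts $\chi_R = \bigotimes_{i\in R}\chi_i$ on $\Z_{n_R}$ for $R\subseteq S$; since these subproducts have at most $|S|<k$ prime factors, the inductive hypothesis bounds each by $O(\sqrt{n_R}\cdot 2^{|R|})$, and combined with the coefficient $\prod_{i\notin S}(1/n_i)$, their total contribution is $o(\sqrt n\cdot 2^k)$. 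The dominant term is $S=[k]$: for this one uses a direct Fourier estimate, observing that coset balance yields $\sum_{\xi\in\langle n/r\rangle}|\hat\chi(\xi)|^2 \le r^2/n^2$ for each divisor $r$ of $n$ (by Parseval applied to cosets of $\langle r\rangle$), and combining this with the standard geometric-sum estimate $\bigl|\sum_{j=0}^{\ell-1}e^{2\pi i j\theta_\xi}\bigr|\leq\min(\ell,1/(2\|\theta_\xi\|))$ gives the $O(\sqrt n\cdot 2^k)$ bound after partitioning the Fourier sum according to which of the $k$ CRT components $\xi_i$ are zero.

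The main obstacle is the sharp Fourier bound on the $S = [k]$ term: a naive Cauchy-Schwarz estimate loses a $\sqrt{\log n}$ factor, so the $2^k$ factor must be extracted by carefully partitioning the support of $\hat\chi$ into the $2^k$ possible "resonance patterns" on the $k$ prime-power factors and using coset-balance on each piece to control the contribution of the resonant frequencies. A secondary delicate point is the prime-power base case, where the partial-coloring argument on the top-level signs must be multi-scale (analogous to that in the proof of Lemma~\ref{lem:partial-coloring}) to deliver $O(\sqrt{n_i})$ AP discrepancy without a $\sqrt{\log n_i}$ loss.
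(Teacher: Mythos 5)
Your proposed construction -- a CRT tensor-product coloring $\chi = \bigotimes_i \chi_i$ with each factor balanced on cosets of $\Z_{n_i}$ -- is a genuinely different route from the paper's. The paper does \emph{not} build $\chi$ as a product: it partitions $\Zn$ into CRT ``boxes'' $X_r$ indexed by divisors $r\mid n$, colors each box by taking a low-AP-discrepancy coloring $\chi_0$ of a sub-box (from Corollary~\ref{cor:subset-coloring}) and then extending it to $2^{|I_r|}$ translated copies with alternating signs $\sgn(v)$, and finally sums over boxes. The alternating signs make $\chi_r$ vanish identically on the congruence classes that would otherwise threaten the bound $\max_{A_0}|\chi(A_0)|\leq 1$, and the AP discrepancy of the full coloring is then controlled additively over boxes by a direct summation. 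There is no tensor-product step and therefore no need for any Fourier estimate on a product coloring.

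Your handling of $\CA_n^0$ is correct and clean: under CRT every coset factors as $a+H = \prod_i(a_i+H_i)$, so indeed $\chi(a+H) = \prod_i\chi_i(a_i+H_i)$ and each factor has modulus at most $1$. The gap is entirely in the AP-discrepancy bound, and you identify the problematic spot yourself but do not close it. Two concrete issues. First, the fully resonant term $S=[k]$: Cauchy--Schwarz applied to $\sum_{\xi\in\Xi_{[k]}}\hat\chi(\xi)D_\ell(d\xi/n)$ gives $\frac{1}{n}\cdot n\cdot\sqrt{n\ell}$, which is useless, and the coset-balance input $\sum_{\xi\in\langle n/r\rangle}|\hat\chi(\xi)|^2\leq r^2$ does not obviously combine with the Dirichlet-kernel estimate to extract a $O(\sqrt{n}\cdot 2^k)$ bound with no $\log$-factor loss -- this is precisely the kind of loss the entropy method in Section~\ref{entropy} is invoked to avoid, and a purely $L^2$/Fourier argument is not known to reproduce it. Second, in the $S\subsetneq[k]$ terms, the ``projected AP'' on $\Z_{n_R}$ is generally a multiset (the AP $A$ wraps many times modulo $n_R$), so the inductive bound $\max_{A'\in\CA_{n_R}}|\chi_R(A')|=O(\sqrt{n_R}2^{|R|})$ does not apply directly; you would need to split the wrapped sum into a multiple of a coset sum (controlled by balance) plus a genuine AP sum, and the multiplier can be as large as $n/n_R$, which cancels the $\prod_{i\notin S}n_i^{-1}$ coefficient exactly rather than making the term negligible. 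Both of these need real proofs, not just a plan. Separately, your prime-power base case (recursion on the $p_i$ cosets of $\langle p_i\rangle$ with signs chosen by a fresh partial-coloring argument) is also unverified and differs from the paper's Lemma~\ref{lem:color-with-bounds-for-cong-classes-dim-1}, which instead partitions $\Z_{p^\alpha}$ into the intervals $[p^{i-1},p^i)$ and colors each antisymmetrically; it is not clear your recursion avoids accumulating $\log$ factors over the $\alpha$ levels. In short: your strategy is plausible and the coset part is right, but the AP discrepancy of a CRT tensor-product coloring is an unproved and genuinely hard claim, whereas the paper's box decomposition sidesteps it entirely.
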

	
	We can now deduce Theorem~\ref{thm:bound-from-distribution} from the above two steps. 
	\begin{proof}[Proof of Theorem~\ref{thm:bound-from-distribution} assuming Lemma~\ref{lem:color-with-bounds-for-cong-classes}]
		For any positive factor $r$ of $n$, by Lemma~\ref{lem:color-with-bounds-for-cong-classes}, there is a coloring $\chi$ of $\Z_r$ such that
		\[\max_{A_0\in \CA_r^0}|\chi(A_0)| \leq 1 \quad \AND  \quad  \max_{A\in \CA_r}|\chi(A)| \leq c\sqrt{r}\cdot 2^{\omega(r)},\]
		where $c$ is an absolute constant. Then applying Lemma~\ref{lem:copies-of-same-coloring}, we conclude that
		\[\disc(\CA_n) \leq \frac{n}{r} + c\sqrt{r}\cdot 2^{\omega(r)}.\]
		As this holds for all positive factors $r$ of $n$, the desired result follows.
	\end{proof}
	
	It remains to prove Lemma~\ref{lem:color-with-bounds-for-cong-classes}.
	We first present a proof of the case that $n$ is a prime power. The proof of this case is simpler and illustrates the main ideas in the proof of Lemma~\ref{lem:color-with-bounds-for-cong-classes}.
	\begin{lemma}\label{lem:color-with-bounds-for-cong-classes-dim-1}
		There exists absolute constant $c > 0$ such that the following holds. Let $n = p^\alpha$ for some prime $p$ and positive integer $\alpha$. Then there exists $\chi: \Zn \to \{1, -1\}$ such that 
		\[\max_{A_0\in \CA_n^0}|\chi(A_0)| \leq 1 \quad \AND \quad  \max_{A\in \CA_n}|\chi(A)| \leq c\sqrt{n}.\]
	\end{lemma}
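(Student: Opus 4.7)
The plan is to prove the lemma by induction on $\alpha$. Given a coloring $\chi' : \Z_{p^{\alpha-1}} \to \{1, -1\}$ satisfying the lemma for $\alpha-1$ with absolute constant $C$, I identify each $x \in \Z_{p^\alpha}$ uniquely with a pair $(i, j) \in \{0, 1, \ldots, p-1\} \times \Z_{p^{\alpha-1}}$ via $x = i + jp$ and set
\[
\chi_\alpha(i + jp) := \eta_i \cdot \chi'(j),
\]
where $\eta_0, \ldots, \eta_{p-1} \in \{\pm 1\}$ are signs to be chosen, subject to $\left|\sum_i \eta_i\right| \leq 1$ (automatic for odd $p$; for $p = 2$ one forces $\eta_0 + \eta_1 = 0$). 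For such a construction the congruence class condition is automatic: any class modulo $p^k$ with $k \geq 1$ in $\Z_{p^\alpha}$ factors as (fixed $i$) $\times$ (a class of $j$ modulo $p^{k-1}$ in $\Z_{p^{\alpha-1}}$), so its $\chi_\alpha$-sum equals $\eta_i \cdot \chi'(\text{sub-class})$, bounded by $1$ by the inductive hypothesis; the whole-set sum is $(\sum_i \eta_i) \cdot \chi'(\Z_{p^{\alpha-1}})$, also at most $1$. The base case $\alpha = 1$ is itself non-trivial and requires a separate argument: start from Corollary~\ref{cor:subset-coloring} applied to $X = \Z_p$ to obtain AP discrepancy $O(\sqrt{p})$, and then flip at most $O(\sqrt{p})$ signs to achieve $|\chi(\Z_p)| \leq 1$; each flip changes any AP-sum by at most $2$, preserving AP discrepancy $O(\sqrt{p})$.

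For the AP condition in the inductive step, consider an AP $A = \{a + kd : 0 \leq k < L\}$ in $\Z_{p^\alpha}$ and split into two cases. If $p \mid d$, then $A$ lies in a single residue class modulo $p$ and corresponds (via $i + jp \mapsto j$) to an AP in $\Z_{p^{\alpha-1}}$, so $|\chi_\alpha(A)| \leq C\sqrt{p^{\alpha-1}}$ by induction, comfortably below the target $C\sqrt{p^\alpha}$. If $\gcd(d, p) = 1$, grouping the sum by residue modulo $p$ yields
\[
\chi_\alpha(A) = \sum_{r=0}^{p-1} \eta_r T_r,
\]
where $T_r = \chi'(A_r)$ and $A_r$ is a sub-AP in $\Z_{p^{\alpha-1}}$ of step $d \bmod p^{\alpha-1}$ (still coprime to $p$), starting at a point $j_r$, with length $L_r \in \{\lfloor L/p\rfloor, \lceil L/p \rceil\}$; by the inductive hypothesis $|T_r| \leq C\sqrt{p^{\alpha-1}}$ for each $r$.

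The hard part, and the main obstacle, is to choose the $p$ signs $\eta_r$ so that $\left|\sum_{r=0}^{p-1} \eta_r T_r\right| \leq C\sqrt{p^\alpha}$ holds simultaneously for every AP in this second case. A trivial triangle inequality loses a factor of $\sqrt{p}$, and although a uniformly random choice of $\eta_r$ together with Hoeffding's inequality gives the correct bound per AP (using $\sum_r T_r^2 \leq p C^2 p^{\alpha-1} = C^2 p^{\alpha}$), a direct union bound over the $\Theta(n^2)$ APs introduces a $\sqrt{\log n}$ factor that compounds destructively across the $\alpha$ inductive steps. To overcome this I would exploit the rigid structural coupling among the $p$ sub-APs associated with a given $A$: they all share the same step $d \bmod p^{\alpha-1}$, and the starting points $j_r$ follow a simple arithmetic pattern in $r$ (consecutive differences $j_{r+1} - j_r$ take only two possible values depending on whether a ``carry'' modulo $p$ occurs). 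A natural route is a second application of the entropy/partial coloring lemma (Lemma~\ref{lem:partial-coloring-lemma}) on the $p$-element universe $\{0, 1, \ldots, p-1\}$, with sets indexed by the AP-vectors $(T_0, \ldots, T_{p-1})$ and slack parameters $\Delta$ calibrated against the $\ell_2$ bound $\sqrt{\sum_r T_r^2} \leq C\sqrt{p^\alpha}$; an alternative is to expand $\chi_\alpha(A)$ over the additive characters of $\Z_{p^\alpha}$ and extract the needed cancellation directly from the character structure of $\eta$ on $\Z_p$, paired with an inductive bound on the Fourier mass of $\chi'$.
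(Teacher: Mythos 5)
Your congruence-class argument is clean and correct, and the base case $\alpha=1$ via sign-flipping works. But the proof as written has a genuine, self-acknowledged gap at the crucial step: in the $\gcd(d,p)=1$ case you must choose a \emph{single} sign vector $\eta\in\{\pm1\}^p$ so that $|\sum_r \eta_r T_r|\le C\sqrt{p^\alpha}$ holds uniformly over all $\Theta(n^2)$ arithmetic progressions $A$, where $T_r = T_r^{(A)}$ depends on $A$. You correctly note that the triangle inequality loses $\sqrt{p}$ and a union bound with Hoeffding loses $\sqrt{\log n}$, and that either loss compounds across the $\alpha$ levels of the recursion (giving $\sqrt{n}\cdot(\log n)^{\Theta(\alpha)}$ rather than $O(\sqrt n)$). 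The remedies you gesture at do not obviously close the gap: Lemma~\ref{lem:partial-coloring-lemma} is a statement about \emph{set systems} (the quantity $\Delta_S/\sqrt{|S|}$ is tied to $0/1$ incidence vectors), whereas your constraint vectors $(T_0,\dots,T_{p-1})$ are real-valued; even with a genuine vector-balancing tool (Spencer/Banaszczyk), a partial coloring of the $\eta$'s would have to be completed to a full $\pm1$ vector, and the completion would reintroduce error into the congruence-class identity that your construction relies on being \emph{exact}. Moreover, for the recursion to give a uniform constant $C$ you cannot afford even a fixed factor $c_1>1$ per level beyond $\sqrt p$, which is a very tight requirement. So as it stands this is a proof outline with an open core step, not a proof.

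For comparison, the paper's proof takes a structurally different route that avoids the recursion entirely. Rather than a product/recursive decomposition of $\Z_{p^\alpha}$, it partitions $\Z_n$ into the geometrically growing intervals $X_0=\{0\}$ and $X_i=\{x: p^{i-1}\le x< p^i\}$. On each $X_i$ it applies a ``mirror'' trick (color the first half via Corollary~\ref{cor:subset-coloring}, then negate on the translate by $|X_i|/2$), which makes every congruence class $C(p^\gamma,w)$ with $\gamma\le i-1$ sum exactly to zero on $X_i$, while the AP discrepancy on $X_i$ is $O\bigl(\sqrt{|X_i|}(\log(en/|X_i|))^{1/2}\bigr)$. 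Summing over $i$ gives a geometric series dominated by its last term, yielding $O(\sqrt n)$ with no compounding. The congruence-class bound follows because $C(p^\gamma,w)$ meets $\bigcup_{i\le\gamma}X_i=\{0,\dots,p^\gamma-1\}$ in exactly one element. This sidesteps precisely the vector-balancing problem you were stuck on: the ``hard'' AP bound is delegated once to Corollary~\ref{cor:subset-coloring} per interval, rather than once per level of a recursion. If you want to salvage your approach, the thing to investigate is whether some deterministic structure of the $T_r^{(A)}$ (you already observed the rigid coupling of the sub-APs) lets you beat the generic vector-balancing bound; absent that, the interval-partition construction is the right tool.
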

	To find a coloring with low discrepancy over $\CA_n^{0}$, 
	the main idea is to color simultaneously two disjoint subsets $S_1$ and $S_2$ of $\Zn$ that are translations of each other, and we hope that $\chi(S_1\cap A_0)$ and $\chi(S_2\cap A_0)$ cancel out for many $A_0\in \CA_n^0$. The $c\sqrt{n}$ upper bound on the discrepancy over $\CA_n$ in Lemma~\ref{lem:color-with-bounds-for-cong-classes-dim-1} is achieved via Corollary~\ref{cor:subset-coloring}. We first derive the following lemma that gives a desired coloring for a subset $X$ of $\Zn$, provided that $X$ is an initial interval of some special length.
	
	\begin{lemma}\label{lem:partitioned-coloring-dim-1}
		There exists absolute constant $c > 0$ such that the following holds. Let $n = p^\alpha$ for some prime $p$ and positive integer $\alpha$. Suppose that $X = \{0, \dots, m-1\}\subseteq \Zn$ for some $m\leq n$ satisfies that $m = sp^\beta$ with $s$ even and $\beta$ being a nonnegative integer. Then there exists $\chi: X\to \{1,-1\}$ such that
		\[\max_{A\in \CA_n}|\chi(A\cap X)| \leq c\sqrt{m}\left(\log\frac{en}{m}\right)^{\frac 12},\]
		and for any $w\in \Zn$ and $r = p^\gamma$ with $0\leq \gamma \leq \beta$,
		\[\chi(C(r, w)\cap X) = 0.\]
	\end{lemma}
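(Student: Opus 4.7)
The plan rests on a single observation: since $s$ is even, $m/2 = (s/2)p^\beta$ is a multiple of $p^\beta$, and hence of every $r = p^\gamma$ with $\gamma \leq \beta$. Consequently the involution $i \leftrightarrow i + m/2$ on $X$ preserves every such congruence class, while also mapping arithmetic progressions in $\Zn$ to arithmetic progressions of the same length. I will construct $\chi$ so that paired elements receive opposite colors; the first property will automatically force $\chi(C(r,w)\cap X)=0$ for $\gamma \leq \beta$, and the second will let the AP discrepancy bound be inherited directly from Corollary~\ref{cor:subset-coloring}.

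Concretely, I would first apply Corollary~\ref{cor:subset-coloring} to the half-set $X' := \{0,1,\dots,m/2-1\}$ to obtain a coloring $\chi':X'\to \{-1,1\}$ satisfying
\[\max_{A\in\CA_n}|\chi'(A\cap X')| \leq c\sqrt{m/2}\left(\log\frac{2en}{m}\right)^{1/2}.\]
Then I would extend $\chi'$ to $\chi:X\to\{-1,1\}$ by setting $\chi(i)=\chi'(i)$ on $X'$ and $\chi(i) = -\chi'(i-m/2)$ on $X''=X\setminus X'$.

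To verify the zero-sum condition, fix $r=p^\gamma$ with $0\leq\gamma\leq \beta$ and $w\in\Zn$, and let $w'\in\{0,\dots,r-1\}$ be the residue of $w$ modulo $r$. Since $r$ divides $m$, the intersection $C(r,w)\cap X$ equals $\{w' + jr : 0\leq j < m/r\}$. Because $r$ divides $m/2$, the translation $i\mapsto i+m/2$ acts on this progression as the involution $j\leftrightarrow j + m/(2r)$, and paired elements have opposite colors by construction; hence $\chi(C(r,w)\cap X)=0$. For the AP discrepancy bound, for any $A\in\CA_n$ I would write
\[\chi(A\cap X) \;=\; \chi'(A\cap X') \;-\; \chi'\bigl((A-m/2)\cap X'\bigr),\]
where $A-m/2$ is the translate of $A$ in $\Zn$, which is itself an arithmetic progression with the same length and common difference. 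Applying the Corollary bound to both terms (simultaneously, since Corollary~\ref{cor:subset-coloring} produces a single coloring of $X'$ that works for all APs) yields $|\chi(A\cap X)| \leq 2c\sqrt{m/2}\,(\log(2en/m))^{1/2}$, which is $O(\sqrt{m}(\log(en/m))^{1/2})$ after absorbing the constant factor $2$ and the factor $\log(2en/m)\leq 2\log(en/m)$ (valid since $m\leq n$).

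The only conceptual obstacle is finding the pairing; once $i\leftrightarrow i+m/2$ is identified, everything else is essentially mechanical. The assumption that $s$ is even is exactly what makes this pairing available, and the $p$-adic structure ensures that the shift $m/2$ is simultaneously invisible to all relevant congruence classes. No modification to Corollary~\ref{cor:subset-coloring} itself is needed.
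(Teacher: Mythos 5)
Your proposal is correct and follows essentially the same route as the paper: apply Corollary~\ref{cor:subset-coloring} to the half-interval $\{0,\dots,m/2-1\}$, extend antipodally via the shift by $m/2$, note that this shift fixes every congruence class $C(p^\gamma,w)$ with $\gamma\le\beta$ because $p^\gamma \mid m/2$, and bound the AP discrepancy by combining the original coloring on $A$ with the same coloring on the translated progression $A - m/2$. The only cosmetic difference is that the paper phrases the zero-sum step as $C(r,w)-m/2=C(r,w)$ and cancels the two $\chi_0$-terms directly, while you verify the pairing element-by-element inside $C(r,w)\cap X$; the content is identical.
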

	\begin{proof}
		Let $S = \{0, \dots, m/2-1\}$. By Corollary~\ref{cor:subset-coloring}, there is $\chi_0:X\to \{1, -1\}$ such that 
		\[|\chi_0(A\cap S)| \leq c_0\sqrt{\frac{m}{2}}\left(\log \frac{2en}{m}\right)^{\frac 12},\]
		holds for any $A\in \CA_n$, where $c_0$ is an absolute constant. We know that $X$ is the disjoint union of $S$ and $S+\frac{m}{2}$, i.e.~each element $x\in X$ is of the form $x = s + v\frac{m}{2}$ for some $s\in S$ and $v\in \{0, 1\}$. We define $\chi: X \to \{1, -1\}$ given by $\chi(s+v\frac{m}{2}) = (-1)^v\cdot \chi_0(s)$ for any $s\in S$ and $v\in \{0, 1\}$.
		
		For any $A\in \CA_n$, noting that $A - \frac{m}{2}$ is also an arithmetic progression,
		\[\begin{split}
		|\chi(A\cap X)| & = |\chi(A\cap S) + \chi(A\cap (S+m/2))| = |\chi_0(A\cap S) + (-1)\cdot \chi_0((A-m/2)\cap S)|\\
		& \leq  |\chi_0(A\cap S)| + |\chi_0((A-m/2)\cap S)| \leq 2\cdot c_0\sqrt{\frac{m}{2}}\left(\log \frac{2en}{m}\right)^{\frac 12} \leq c\sqrt{m}\left(\log \frac{en}{m}\right)^{\frac 12},
		\end{split}\]
		where $c$ is an absolute constant. Next we verify that for any $r = p^{\gamma}$ with $\gamma \leq \beta$ and any $w\in \Zn$, $\chi(C(r, w) \cap X) = 0$. Note that $m/2$ is a multiple of $r$, so $C(r, w) - m/2 = C(r, w)$. Hence we have
		\[\chi(C(r, w)\cap X) = \chi_0(C(r, w)\cap S) + (-1)\cdot \chi_0((C(r, w)-m/2)\cap S) =\chi_0(C(r, w)\cap S) - \chi_0(C(r, w)\cap S) = 0. \]
		We have thus shown the existence of such a coloring $\chi$ with the desired properties.
	\end{proof}
	
	In the case $n = p^{\alpha}$, we partition $\Zn$ into a few sets, each being an interval of $\Zn$ of length an even number times a power of $p$ as in Lemma~\ref{lem:partitioned-coloring-dim-1}.
	\begin{proof}[Proof of Lemma~\ref{lem:color-with-bounds-for-cong-classes-dim-1}]
		Note that for any $C(r, w)\in \CA_n^0$, we have $C(r, w) = C(\gcd(r, n), w)$. Hence it suffices to check $|\chi(C(r, w))| \leq 1$ where $r$ divides $n = p^\alpha$, i.e., $r = p^\gamma$ for some nonnegative $\gamma \leq \alpha$.
		
		Let $c_0$ be the constant in Lemma~\ref{lem:partitioned-coloring-dim-1}. 
		
		If $p = 2$, we can take $m = n = 2\cdot 2^{\alpha-1}$ in Lemma~\ref{lem:partitioned-coloring-dim-1}. Thus, there exists $\chi: \Zn\to\{1, -1\}$ such that 
		\[\max_{A\in \CA_n} |\chi(A)| \leq c_0\sqrt{n},\]
		and that for any $w\in \Zn$ and any $r = 2^{\gamma}$ with $\gamma \leq \alpha-1$, $\chi(C(r, w)) = 0$. This means that, for any $r = 2^\gamma$, if $\gamma \leq \alpha-1$, then $\chi(C(r, w)) = 0$; if $\gamma = \alpha$, then $C(r, w)$ contains a single element $w$, so $|\chi(C(r, w))| = 1$ in this case. We conclude that
		\[\max_{A_0\in \CA_n^0}|\chi(A_0)| \leq 1.\]
		
		If $p > 2$ is an odd prime, then we partition $\Zn$ into a few intervals: $X_0 = \{0\}, X_i = \{x: p^{i-1} \leq x < p^i\}$ for $1\leq i\leq \alpha$. Note that for each $i\geq 1$, $X_i$ is an interval of length $(p-1)\cdot p^{i-1}$. We would like to apply Lemma~\ref{lem:partitioned-coloring-dim-1} to each $X_i$ for $i\geq 1$. It is applicable because the property is maintained by translation. Therefore for each $1\leq i\leq \alpha$ there is a coloring $\chi_i: X_i\to \{1, -1\}$ such that
		\[\max_{A\in \CA_n} |\chi_i(A\cap X_i)| \leq c_0\sqrt{(p-1)\cdot p^{i-1}} \left(1 + \log\frac{p^\alpha}{(p-1)\cdot p^{i-1}}\right)^{\frac{1}{2}},\]
		and for each $w\in \Zn$ and $r = p^\gamma$ with $\gamma \leq i-1$, we have $\chi_i(C(r, w)\cap X_i) = 0.$
		
		For $X_0$ we may take any arbitrary $\chi_0: X_0 \to \{1, -1\}$. Note that $X_0 = \{0\}$ contains only one element, so $|\chi_0(A\cap X_0)| \leq 1$ for all $A\subseteq \Zn$.
		
		Since $X = \coprod_{i = 0}^{\alpha}X_i$, we define $\chi: X\to \{1, -1\}$ such that $\chi(x_i) = \chi_i(x_i)$ for all $0\leq i\leq \alpha$ and $x_i\in X_i$.
		Consequently, we know that for each $A \in \CA_n$,
		\[\begin{split}
		|\chi(A)| = \sum_{i = 0}^\alpha |\chi_i(A\cap X_i)| \leq 1 + \sum_{i = 1}^\alpha c_0\sqrt{(p-1)\cdot p^{i-1}} \left(1 + \log\frac{p^\alpha}{(p-1)\cdot p^{i-1}}\right)^{\frac{1}{2}} \leq c\sqrt{n}
		\end{split}\]
		for some appropriate constant $c$. Moreover, for each $C(r, w) \in \CA_n^0$, we may assume that $r = p^\gamma$ for some $0\leq \gamma\leq \alpha$ and it gives that
		\[\chi(C(r, w)) = \chi\left(C(r, w) \cap \left(\bigcup_{i=0}^{\gamma} X_i\right)\right) + \sum_{i  = \gamma+1}^{\alpha}\chi_i(C(r, w)\cap X_i).\]
		Now, for each $i \geq \gamma+1$, i.e. $\gamma \leq i-1$, we have $\chi_i(C(r, w)\cap X_i) = 0$. Also note that $\bigcup_{i=0}^{\gamma} X_i$ is $\{x: 0\leq x < p^\gamma\}$, so there is exactly one element of $C(r, w)$ in it. Hence
		\[|\chi(C(r, w))| = \left|\chi\left(C(r, w) \cap \left(\bigcup_{i=0}^{\gamma} X_i\right)\right)\right| = 1.\]
		Thus we may conclude that
		\[\max_{A_0\in \CA_n^0}|\chi(A_0)| \leq 1.\]
		Hence for both $p = 2$ and $p$ is odd, we can always find such a coloring $\chi$.
	\end{proof}
	
	Finally we extend the argument above to arbitrary $n$ using Chinese remainder theorem.
	\begin{notations}
		Suppose that number $n$ has prime factorization $n = p_1^{\alpha_1}\cdots p_k^{\alpha_k}$. By Chinese remainder theorem, we have an isomorphism
		\[\psi_n: \Z_{p_1^{\alpha_1}} \times \cdots \times \Z_{p_k^{\alpha_k}} \to \Zn.\]
	\end{notations}

	Again, we first show the following analogue of Lemma~\ref{lem:partitioned-coloring-dim-1}. This shows that if $X\subseteq \Zn$ has some special structure, then we have a coloring of $X$ with low discrepancy with respect to both $\CA_n$ and $\CA_n^0$.
	\begin{lemma}\label{lem:partitioned-coloring-dim-k}
		Let $n$ be a positive integer with prime factorization $n = p_1^{\alpha_1}\cdots p_k^{\alpha_k}$. For each $1\leq i\leq k$, let $T_i \leq p_i^{\alpha_i}$ be a positive integer. Let $X = \{\psi_n(t_1, \dots, t_k): 0\leq t_i < T_i \Forall 1\leq i\leq k\}$. Then $X\subseteq \Zn$ is a set of $m = \prod_{i=1}^k T_i$ elements. Suppose that  $I\subseteq [k]$ is a subset of indices and $(\beta_i)_{i\in I}$ is a sequence of nonnegative integers, such that for each $i\in I$, $T_i = s_ip_i^{\beta_i}$ for some even number $s_i$. Then there exists a coloring $\chi: X\to \{-1, 1\}$ such that the following holds. Its discrepancy over $\CA_n$ satisfies that
		\begin{equation}\label{eqn:lem:partitioned-coloring-dim-k-1}\max_{A\in \CA_n} |\chi(A\cap X)| \leq c2^{\frac{|I|}{2}}\cdot \sqrt{m}\left(|I|+1+\log\frac{n}{m}\right)^{\frac{1}{2}},\end{equation}
		and for any $w\in \Zn$ and any $i\in I$,
		\begin{equation}\label{eqn:lem:partitioned-coloring-dim-k-2}\chi(C(n/p_i^{\alpha_i - \beta_i}, w)\cap X) = 0.\end{equation}
	\end{lemma}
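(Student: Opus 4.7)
The plan is to generalize the involutive-pairing construction from the prime power case (Lemma~\ref{lem:partitioned-coloring-dim-1}) to the multi-dimensional CRT setting. Think of $X$ as the axis-aligned box $\prod_{i=1}^k [0, T_i)$ in CRT coordinates. The assumption that each $T_i = s_i p_i^{\beta_i}$ with $s_i$ even (for $i \in I$) lets us split $[0, T_i)$ into two halves of length $T_i/2$, and splitting in every direction $i \in I$ partitions $X$ into $2^{|I|}$ congruent translates of a smaller box.

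Concretely, let $S = \psi_n\!\left(\prod_{i\in I}[0, T_i/2) \times \prod_{i \notin I}[0, T_i)\right)$, and for each $g = (g_i)_{i\in I}\in G := \{0,1\}^I$ let $u_g\in \Zn$ be the CRT image of the tuple with entry $(T_i/2)g_i$ in position $i \in I$ and $0$ elsewhere. Then $X = \coprod_{g\in G}(S + u_g)$ and $|S| = m/2^{|I|}$. Applying Corollary~\ref{cor:subset-coloring} to $S$, obtain $\chi_0: S\to \{-1, 1\}$ with $|\chi_0(A\cap S)| \le c_0\sqrt{|S|}\bigl(\log(en/|S|)\bigr)^{1/2}$ for all $A\in \CA_n$. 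Define $\chi: X\to \{-1, 1\}$ by $\chi(s + u_g) := (-1)^{|g|}\chi_0(s)$ for $s\in S$ and $g\in G$, where $|g| = \sum_{i\in I}g_i$.

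For the discrepancy bound~\eqref{eqn:lem:partitioned-coloring-dim-k-1}, for any $A\in \CA_n$ write
\[
\chi(A\cap X) = \sum_{g\in G}(-1)^{|g|}\chi_0\!\bigl((A - u_g)\cap S\bigr).
\]
Since $A - u_g \in \CA_n$, the triangle inequality over the $2^{|I|}$ summands gives
$|\chi(A\cap X)| \le 2^{|I|}\cdot c_0\sqrt{m/2^{|I|}}\bigl(\log(en\cdot 2^{|I|}/m)\bigr)^{1/2}$, and bounding $\log(en\cdot 2^{|I|}/m) \le 1 + \log(n/m) + |I|$ yields the claimed bound up to an absolute constant. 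For the cancellation~\eqref{eqn:lem:partitioned-coloring-dim-k-2}, fix $i \in I$ and $w\in \Zn$. In CRT coordinates the set $C(n/p_i^{\alpha_i-\beta_i}, w)\cap X$ is either empty or has every coordinate $t_j$ ($j\ne i$) fixed to a specific value, while $t_i$ ranges over the $s_i$ values $\{w_i^\ast + \ell p_i^{\beta_i}: 0\le \ell < s_i\}$, where $w_i^\ast \in [0, p_i^{\beta_i})$ is congruent to $w$ mod $p_i^{\beta_i}$. Pair $\ell$ with $\ell + s_i/2 \pmod{s_i}$; this shifts $t_i$ by $T_i/2 = (s_i/2)p_i^{\beta_i}$, which flips $v_i\in\{0,1\}$ (indicating which half of $[0,T_i)$ contains $t_i$) while preserving $t_i \bmod (T_i/2)$ and all other coordinates. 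Hence the $S$-representative of the paired element is identical, only the sign factor $(-1)^{|g|}$ flips, and each pair contributes $0$ to the sum.

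The main obstacle is bookkeeping: one must carefully identify elements of $X$ with the CRT tuples so that the decomposition $X = \coprod_g(S + u_g)$, the action of the shifts $u_g$, and the pairing involution on a fixed congruence class all fit together. In particular, one must check that the pairing argument for a single $i \in I$ is not disturbed by the fact that the coloring $\chi$ depends on \emph{all} coordinates $v_j$, $j\in I$: this works precisely because within a fixed congruence class, only the $i$-th coordinate varies, so the other sign factors are constant across the pairing. Once these identifications are made, both estimates follow essentially immediately from Corollary~\ref{cor:subset-coloring} and the triangle inequality.
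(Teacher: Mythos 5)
Your proposal is correct and follows essentially the same route as the paper: split the CRT box in half along each coordinate $i\in I$, apply Corollary~\ref{cor:subset-coloring} to the small box, extend by the parity sign $(-1)^{|g|}$ across the $2^{|I|}$ translates, and get the cancellation in~\eqref{eqn:lem:partitioned-coloring-dim-k-2} because within a congruence class $C(n/p_i^{\alpha_i-\beta_i},w)$ only the $i$-th CRT coordinate varies, so the shift by $T_i/2$ flips exactly one sign factor. The only cosmetic difference is that the paper phrases the cancellation at the level of the two nonempty translates $X_v, X_{v'}$ (using $C(r_i,w)-u_v = C(r_i,w)-u_{v'}$), whereas you pair up individual elements; these are the same argument.
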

	\begin{proof}
		Let $L = |I|$. Let $S_i = T_i/2$ if $i\in I$, and $S_i = T_i$ otherwise. For each $i\in I$, from the condition that $T_i/p_i^{\beta_i}$ is even, we know that $S_i$ is a multiple of $p_i^{\beta_i}$. We define
		\[X_0 = \{\psi_n(t_1, \dots, t_k): 0\leq t_i < S_i \Forall 1\leq i\leq k\}.\]
		We know that $X_0\subseteq \Zn$ is a set of size $m/2^L$. Next we apply Corollary~\ref{cor:subset-coloring} to get the constant $c_0>0$ and a partial coloring of $\chi_0: X_0 \to \{-1, 1\}$ such that
		\[\max_{A\in \CA_n} |\chi_0(X_0\cap A)| \leq c_0\sqrt{\frac{m}{2^{L}}}\left(1+\log\frac{2^{L}n}{m}\right)^{\frac 12} \leq c\sqrt{\frac{m}{2^{L}}} \left(L+1+\log\frac{n}{m}\right)^{\frac12},\]
		where $c$ is another absolute constant.
		
		For each $v = (v_i)_{i\in I} \in \{0, 1\}^{I}$ (binary tuples of length $L$ indexed by $I$), we define $\sgn(v) = (-1)^{\sum_{i\in I}v_i}$, and let $u_v\in \Zn$ be the unique element, by the Chinese remainder theorem, such that 
		\[u_v\equiv v_i S_i\Mod{p_i^{\alpha_i}} \Forall i\in I \quad \AND \qquad u_v \equiv 0 \Mod{p_j^{\alpha_j}} \Forall j\notin I.\]
		Thus we have the decomposition of $X$ into disjoint copies of translations of $X_0$:
		\[X = \coprod_{v\in \{0, 1\}^I} X_v, ~~\text{where} \quad X_v := u_v+X_0.\]
		This implies that each $x\in X$ can be written in the form $x = u_v + x_0$ for some $v\in \{0, 1\}^I$ and $x_0\in X_0$. Now we define $\chi: X\to \{1, -1\}$ by $\chi(u_v+x_0) = \sgn(v)\cdot \chi(x_0)$ for all $v\in \{0, 1\}^I, x_0\in X_0$.
		For any $A\in \CA_n$, noting that $A- u_v$ is also in $\CA_n$, so
		\[\begin{split}|\chi(X\cap A)| & = \left|\sum_{v\in \{0, 1\}^I} \chi(X_v\cap A)\right| = \left|\sum_{v\in \{0, 1\}^I} \sgn(v)\cdot \chi_0(X_v\cap A - u_v)\right| \\
		& \leq \sum_{v\in \{0, 1\}^I} \left|\chi_0(X_0\cap (A - u_v))\right| \leq 2^L\max_{A\in \CA_n} |\chi_0(X_0\cap A)| \\
		& \leq 2^L \cdot c\sqrt{\frac{m}{2^L}} \left(L+1+\log\frac{n}{m}\right)^{\frac12} = c2^{\frac{L}{2}}\cdot \sqrt{m}\left(L+1+\log\frac{n}{m}\right)^{\frac{1}{2}}. \end{split}\]
		This shows that the coloring $\chi$ satisfies  the condition \eqref{eqn:lem:partitioned-coloring-dim-k-1} on the discrepancy over $\CA_n$.

		It remains to show \eqref{eqn:lem:partitioned-coloring-dim-k-2}. For each $i\in I$, let $r_i = n/p_i^{\alpha_i - \beta_i} = p_1^{\alpha_1}\cdots p_i^{\beta_i} \cdots p_k^{\alpha_k}$.
		For any $i\in I$ and any $w\in \Zn$, we know that $C(r_i, w)$ contains $p^{\alpha_i - \beta_i}$ elements. If $C(r_i, w)\cap X = \emptyset$, then \eqref{eqn:lem:partitioned-coloring-dim-k-2} trivially holds. It suffices to check $\chi(C(r_i, w)\cap X) = 0$ for $w = \psi_n(t_1,\dots, t_k)\in X_v$ for some $v\in \{0, 1\}^I$.  Since $p_j^{\alpha_j}$ divides $r_i$, each element in $C(r_i, w)$ is congruent to $t_j$ mod $p_j^{\alpha_j}$ for $j\ne i$. This implies that $C(r_i, w)$ only has nonempty intersection with $X_v$ and $X_{v'}$ where $v, v'\in \{0, 1\}^I$ differ only at the entry indexed by $i$. Moreover by the definition of $u$, $u_v - u_{v'}$ is congruent to $0$ mod $p_j^{\alpha_j}$ for all $j\ne i$, and is congruent to $\pm \frac{s_i}{2}p_i^{\beta_i}$ mod $p_i^{\alpha_i}$. Thus $u_v - u_{v'}$ is a multiple of $r_i$, which implies $C(r_i, w) = C(r_i, w) + (u_v - u_{v'})$, or equivalently $C(r_i, w) - u_v = C(r_i, w) - u_{v'}$. Therefore we have
		\[\begin{split}
		\chi(C(r_i, w)\cap X) & = \chi(C(r_i, w)\cap X_v) + \chi(C(r_i, w)\cap X_{v'}) \\
		& = \sgn(v) \cdot \chi_0((C(r_i, w) - u_v)\cap X_0) + \sgn(v') \cdot \chi_0((C(r_i, w) - u_{v'})\cap X_0) = 0.
		\end{split}\]
		This is true for all $i\in I$ and $w\in \Zn$, and thus we conclude that $\chi$ satisfies \eqref{eqn:lem:partitioned-coloring-dim-k-2} as well.
	\end{proof}

	We are now ready to prove Lemma~\ref{lem:color-with-bounds-for-cong-classes}. The idea is that, we partition $\Zn$ into a few subsets, one for each factor $r$ of $n$, and for each of them we use Lemma~\ref{lem:partitioned-coloring-dim-k} to get a coloring of this subset with low discrepancy over $\CA_n$ and nearly optimal discrepancy over $\CA_n^0$. Finally we show that the full coloring we get from combining these subset colorings satisfies the properties.
	\begin{proof}[Proof of Lemma~\ref{lem:color-with-bounds-for-cong-classes}]
		Suppose that $n$ has prime factorization $n = p_1^{\alpha_1}\cdots p_k^{\alpha_k}$. For each $1\leq i\leq k$, we partition $\Z_{p_i^{\alpha_i}}$ into $\alpha_i+1$ intervals $\left(S^{(t)}_{i}\right)_{t=0}^{\alpha_i}$ in the following way. If $p_i = 2$, then we set $S_i^{(\alpha_i)} = \Z_{p_i^{\alpha_i}}$ and $S_i^{(t)}  = \emptyset$ for all $0\leq t < \alpha_i$. If $p_i > 2$, then we set $S_i^{(0)} = \{0\}\subseteq \Z_{p_i^{\alpha_i}}$ and $S_i^{(t)} = \{x:p_i^{t-1}\leq x < p_i^t\}$ for $1\leq t\leq \alpha_i$. In particular we have $|S^{(t)}_i| \leq p_i^{t}$ for all $i$ and $t$. Moreover, for each $i$ and $t > 0$, $S_i^{(t)}$ is always an interval of length $s_ip_i^{t-1}$ in $\Z_{p_i^{\alpha_i}}$ for some even $s_i$.
		
		For each factor $r = p_1^{\delta_1}\cdots p_k^{\delta_k}$ of $n$, we define $Y_r = S_1^{(\delta_1)}\times \cdots \times S_k^{(\delta_k)}\subseteq \Z_{p_1^{\alpha_1}}\times \cdots \times \Z_{p_k^{\alpha_k}}$ and $X_r = \psi_n(Y_r)  \subseteq \Zn$.
		Since each $\Z_{p_i^{\alpha_i}}$ is partitioned into $S_i^{(t)}$ for $0\leq t \leq \alpha_i$, their product is partitioned into $Y_r$ for $r$ divides $n$, so $\Zn$ is partitioned into $X_r$ for $r$ divides $n$. Because $|S^{(t)}_i| \leq p_i^{t}$ for all $i$ and $t$,
		\begin{equation}\label{eqn:upper-bound-on-Xr}
		|X_r| = |Y_r| = \prod_{i = 1}^k |S_i^{(\delta_i)}| \leq \prod_{i = 1}^k p_i^{\delta_i} = r.
		\end{equation}
		For each nonempty $X_r$ with $r = p_1^{\delta_1}\cdots p_k^{\delta_k}$, we would like to apply Lemma~\ref{lem:partitioned-coloring-dim-k} to get a coloring of $X_r$ with bounded discrepancy. Note that we can translate $Y_r$ so that all intervals $S_j^{(\delta_j)}$ for $1\leq j\leq k$ start at $0$, and notice those properties from Lemma~\ref{lem:partitioned-coloring-dim-k} are invariant under translation, so it is applicable to $X_r$. Let $I_r = \{i\in [k]: \delta_i > 0\}\subseteq [k]$. We have that for each $i\in I_r$, $S_i^{(\delta_i)}$ is an interval of length $s_ip_i^{\beta_i}$ for some even $s_i$ and $\beta_i = \delta_i - 1$. This 
		implies that we can find a partial coloring $\chi_r: X_r\to \{-1, 1\}$ such that
		\begin{equation}\label{eqn:cong-classes-disc-over-AP}
		\begin{split}\max_{A\in \CA_n}|\chi_r(A\cap X_r)| & \leq c_02^{\frac{|I_r|}{2}}\sqrt{|X_r|}\left(|I_r| + 1 + \log\frac{n}{|X_r|}\right)^{\frac{1}{2}},
		\end{split}
		\end{equation}
		and for any $i\in I_r$ and any $w\in \Zn$,
		\begin{equation}\label{eqn:cong-classes-disc-over-cong-classes}
		\chi_r(C(n/p_i^{\alpha_i - \beta_i}, w)\cap X_r) = \chi_r(C(n/p_i^{\alpha_i - \delta_i+1}, w)\cap X_r) = 0.
		\end{equation}
		
		Having defined $\chi_r$ as above for each nonempty $X_r$, we now construct $\chi$ by setting $\chi(x_r) = \chi_r(x_r)$ for each $x_r\in X_r$ for all $r$ that divides $n$. We first show that $\chi$ has low discrepancy over $\CA_n$. By \eqref{eqn:upper-bound-on-Xr} we have $|X_r| \leq r$. By definition we have $|I_r| \leq k$. Also note that $x^{1/2}\cdot \left(k+1+\log \frac{n}{x}\right)^{\frac{1}{2}}$ is increasing for $x\in (0, n]$. Combining these together, we know that for any nonempty $X_r$,
		\begin{equation}\label{eqn: size estimation}
		2^{\frac{|I_r|}{2}}\sqrt{|X_r|}\left(|I_r| + 1 + \log\frac{n}{|X_r|}\right)^{\frac{1}{2}} \leq  2^\frac{k}{2}\sqrt{r}\left(k+ 1 + \log\frac{n}{r}\right)^{\frac{1}{2}} \leq 2^{\frac k2}(k+1)^{\frac 12} \sqrt{r}\left(\log\frac{en}{r}\right)^{\frac 12}.
		\end{equation}
		Combining \eqref{eqn:cong-classes-disc-over-AP} and \eqref{eqn: size estimation},  there exists an absolute constant $c_1$ such that for each nonempty $X_r$,
		\begin{equation}\label{eqn:cong-classes-disc-over-AP-1}
		\max_{A\in \CA_n}|\chi_r(A\cap X_r)| \leq c_02^{\frac{k}{2}}(k+1)^{\frac12}\cdot \sqrt{r}\left(\log\frac{en}{r}\right)^{\frac{1}{2}} \leq c_12^{\frac{3}{4}k}\cdot \sqrt{r}\left(1 + \log\frac{n}{r}\right)^{\frac{1}{2}}.
		\end{equation}
		This is also true for $X_r = \emptyset$. Moreover we know that for any $r = p_1^{\delta_1}\cdots p_k^{\delta_k}$,
		\[ \sqrt{r}\left(1 + \log\frac{n}{r}\right)^{\frac{1}{2}} \leq \prod_{i = 1}^k p_i^{\delta_i/2} \left(1 + \log\frac{p_i^{\alpha_i}}{p_i^{\delta_i}}\right)^{\frac{1}{2}} = \sqrt{n} \prod_{i = 1}^k p_i^{-(\alpha_i - \delta_i)/2} \left(1 + (\alpha_i - \delta_i)\log p_i\right)^{\frac{1}{2}}.\]
		Summing this over all factors $r = p_1^{\delta_1}\cdots p_k^{\delta_k}$ for $0 \leq \delta_i\leq \alpha_i$, we have that if we set $t_i = \alpha_i - \delta_i$, 
		\[\begin{split}\sum_{r|n}\sqrt{r}\left(1 + \log\frac{n}{r}\right)^{\frac{1}{2}} &  \leq  \sqrt{n} \cdot \sum_{\delta_1 = 0}^{\alpha_1}\cdots \sum_{\delta_k = 0}^{\alpha_k}\prod_{i = 1}^k p_i^{-(\alpha_i - \delta_i)/2} \left(1 + (\alpha_i - \delta_i)\log p_i\right)^{\frac{1}{2}} \\
		& = \sqrt{n} \prod_{i = 1}^k \left(\sum_{t_i = 0}^{\alpha_i}p_i^{-t_i/2}(1+t_i\log p_i)^{\frac12}\right) \leq  \sqrt{n} \prod_{i = 1}^k \left(\sum_{t_i = 0}^{\infty}p_i^{-t_i/2}(1+t_i\log p_i)^{\frac12}\right).
		\end{split}\]
		Note that for $p \geq 2$,
		\[f(p) := \sum_{t = 0}^{\infty}p^{-t/2}(1+t\log p)^{\frac12}\]
		is well-defined and tends to $1$ as $p \to \infty$. Thus there exists an absolute constant $P > 0$ such that if $p \geq P$, then $f(p) < 2^{\frac{1}{4}}$. Since there are only finitely many prime numbers less than $P$, we conclude that there exists an absolute constant $c_2$ such that
		\[\prod_{i = 1}^k \left(\sum_{t_i = 0}^{\infty}p_i^{-t_i/2}(1+t_i\log p_i)^{\frac{1}{2}}\right) = \prod_{i = 1}^k f(p_i) \leq c_2 \cdot 2^{\frac{k}{4}}.\]
		Thus we have
		\begin{equation}\label{eqn:cong-classes-disc-over-AP-2}
		\sum_{r|n}\sqrt{r}\left(1 + \log\frac{n}{r}\right)^{\frac{1}{2}} \leq \sqrt{n}\prod_{i = 1}^k \left(\sum_{t_i = 0}^{\infty}p_i^{-t_i/2}(1+t_i\log p_i)^{\frac{1}{2}}\right) \leq c_22^{\frac{k}{4}}\sqrt{n}.
		\end{equation}
		Thus for any $A\in \CA_n$, by \eqref{eqn:cong-classes-disc-over-AP-1} and \eqref{eqn:cong-classes-disc-over-AP-2},
		\[|\chi(A)| = \left|\sum_{r|n}\chi_r(A\cap X_r)\right| \leq \sum_{r|n}c_12^{\frac{3}{4}k}\cdot \sqrt{r}\left(1 + \log\frac{n}{r}\right)^{\frac{1}{2}} \leq c_1c_22^{k}\sqrt{n}.\]
		Here we set $c = c_1c_2$ to get the desired bound on the discrepancy over $\CA_n$.
		
		Now we study the discrepancy of $\chi$ over $\CA_n^0$. Note that $C(r', w) = C(\gcd(r', n), w)$. We may only consider the set of $C(r', w) \in \CA_n^0$ for which $r' = p_1^{\gamma_1}\cdots p_k^{\gamma_k}$ is a factor of $n$. By definition of $\chi$, we have the relation that
		\begin{equation}\label{eqn:cong-classes-disc-as-sum}\chi(C(r', w)) = \sum_{r|n}\chi_r(C(r', w)\cap X_r).\end{equation}
		If $r'$ is not a multiple of $r = p_1^{\delta_1}\cdots p_k^{\delta_k}$, then we know that there exists some $1\leq i\leq k$ for which $\gamma_i \leq \delta_i - 1$. It follows that $\delta_i > 0$, and $i\in I_r$. Since $r'$ is a factor of $n$ and $\gamma_i \leq \delta_i - 1$, it is a factor of $n/p_i^{\alpha_i - \delta_i + 1}$. Consequently, the congruence class $C(r', w)$ can be partitioned into congruence classes of the form $C(n/p_i^{\alpha_i - \delta_i + 1}, w')$. Noting that \eqref{eqn:cong-classes-disc-over-cong-classes} holds for any $w'\in \Zn$, we know that if $r$ is not a factor of $r'$, then $\chi_r(C(r', w)\cap X_r) = 0$.
		Thus we can remove all summands on the right hand side of \eqref{eqn:cong-classes-disc-as-sum} except those $r$ that divides $r'$. We get
		\[\chi(C(r', w)) = \sum_{r|r'}\chi_r(C(r', w)\cap X_r) = \chi\left(C(r', w) \cap \left(\bigcup_{r|r'}X_r\right)\right).\]
		Note that we are taking union over all $r = p_1^{\delta_1}\cdots p_k^{\delta_k}$ for which $0\leq \delta_i \leq \gamma_i$ for all $1\leq i\leq k$. We have
		\[\begin{split}
		\bigcup_{r|r'}X_r & = \psi_n\left(\bigcup_{r|r'}Y_r\right)\\
		& = \psi_n\left(\bigcup_{\delta_1 = 0}^{\gamma_1} \cdots \bigcup_{\delta_k = 0}^{\gamma_k} S_1^{(\delta_1)}\times \cdots \times S_k^{(\delta_k)}\right)\\
		& = \psi_n\left(\prod_{i = 1}^n \left(\bigcup_{\delta_i = 0}^{\gamma_i} S_i^{(\delta_i)}\right)\right).
		\end{split}\]
		From our construction, we know that $\bigcup_{\delta_i = 0}^{\gamma_i} S_i^{(\delta_i)} = \emptyset$ if $p_i = 2$ and $\gamma_i < \alpha_i$, and otherwise
		\[\bigcup_{\delta_i = 0}^{\gamma_i} S_i^{(\delta_i)} = \{x: 0\leq x < p_i^{\gamma_i}\} \subseteq \Z_{p_i^{\alpha_i}}.\]
		Thus we may conclude that
		\begin{equation}\label{eqn:union-subset}\bigcup_{r|r'}X_r \subseteq \{\psi_n(t_1, \cdots, t_k): 0\leq t_i < p_i^{\gamma_i}\Forall1\leq i\leq k\}.\end{equation}
		Let $Z_{r'}$ be the set on the right hand side of \eqref{eqn:union-subset}. Any two distinct elements of $Z_{r'}$ differ on some $t_i$ for $1\leq i\leq k$, i.e., they have different remainders when divided by $p_i^{\gamma_i}$ for some $i$. Therefore their difference is not divisible by $r'$. Hence there is at most one element of $C(r', w)$ in $\bigcup_{r|r'}X_r \subseteq Z_{r'}$ for any $w\in \Zn$. We conclude that for any $r'$ that divides $n$ and any $w\in \Zn$,
		\[|\chi(C(r', w))| = \left|\chi\left(C(r', w) \cap \left(\bigcup_{r|r'}X_r\right)\right)\right| \leq 1.\]
		This finishes our proof.
	\end{proof}

		\begin{remark}
		By being a little more careful with bounds in the above proof, we may improve the latter inequality in Lemma \ref{lem:color-with-bounds-for-cong-classes} to
		\[\max_{A\in \CA_n}|\chi(A)| \leq \sqrt{n}\cdot 2^{\left(\frac{1}{2} + o(1)\right)k},\]
		where the $o(1)$ term tends to $0$ as $k \to \infty$. This in turn improves the bound on 
		$\disc(\CA_n)$ to $\min_{r|n}\left(\frac{n}{r} + \sqrt{r}\cdot 2^{\left(\frac{1}{2} + o(1)\right)\omega(r)}\right)$
		in Theorem~\ref{thm:bound-from-distribution}, where the $o(1)$ term goes to $0$ as $\omega(r) \to \infty$.
	\end{remark}


	\section{Lower Bounds }\label{sec lower bounds}
	In this section, we prove the lower bounds in Theorems~\ref{main theorem} and \ref{prime power}. These proofs use Fourier analysis. 
	We first set up helpful notations and prove a consequence of the Plancherel theorem in Lemma~\ref{lem:Plancherel}. 
	
	
	\begin{notations} 
		Let $f: \Zn \to \C$. Any coloring $\chi: \Zn \to \{1, -1\}$ is a special case. In this section, all summations are over $\Zn$ unless otherwise specified.
		
		We define the Fourier transform $\widehat f: \Zn \to \C$ by $\widehat f(r) = \sum_{x\in \Zn}f(x)e^{-\frac{2\pi i}{n}xr}$.
		
		For two functions $f_1, f_2: \Zn\to \C$, their convolution $f_1*f_2: \Zn \to \C$ is given by $f_1*f_2(a) = \sum_x f_1(x)f_2(a-x)$. We have the convolution identity $\widehat{f_1*f_2} = \widehat{f_1} \cdot \widehat f_2$.

		For any $r\in \N$ that divides $n$, we say that $x \equiv y \Mod r$ for $x, y\in \Zn$ if $x = y + kr$ for some $k\in \Zn$. We define, for any $r \in \N$ that divides $n$ and $a\in \Zn$,
		\[g_f(a, r) := \sum_{x\in \Zn: x\equiv a\!\Mod r} f(x) \quad \AND  \quad G_f(r) := \sum_{a=0}^{r-1} |g(a, r)|^2.\]
		We see that $g_f(a, r) = g_f(a+kr, r)$, so $g_f(\cdot, r)$ can be treated as a function on $\Z_r$.

		Let $M$ be a set or a multiset of elements in $\Zn$. For $a, r\in \Zn$, we define $rM$ to be the multiset $\{rx: x\in M\}$, and $a+M$ to be the multiset $\{a+x: x\in M\}$. We define $m_M$ to be the multiplicity function, i.e. $m_M(x)$ is the multiplicity of $x$ in $M$. In particular, when $M$ is a set, $m_M = 1_M$ is the indicator function. We also define 
		\[f(M) = \sum_{x\in M}f(x).\]
	\end{notations}
	
	
	\begin{lemma}\label{lem:Plancherel}
		For any $f: \Zn \to \C$, let $\widehat f$ be its Fourier transform. Let $r\in \N$ be a factor of $n$. Then
	\[\sum_{k = 0}^{r-1} \left|\widehat f\left(k\cdot \frac{n}{r}\right)\right|^2 = rG_f(r).\]
	\end{lemma}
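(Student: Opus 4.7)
The plan is to recognize $g_f(\cdot, r)$ as a function on $\Z_r$, compute its length-$r$ discrete Fourier transform directly, and observe that the resulting values are exactly $\widehat{f}(kn/r)$ for $0 \le k < r$. The identity then drops out of Plancherel on $\Z_r$.

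First, I would define, for $0 \le k < r$,
\[
\widehat{g_f}(k) \;:=\; \sum_{a=0}^{r-1} g_f(a, r)\, e^{-\frac{2\pi i}{r} a k},
\]
the $\Z_r$-Fourier transform of the function $a \mapsto g_f(a, r)$. Unfolding the definition of $g_f(a, r)$ and swapping the order of summation gives
\[
\widehat{g_f}(k) \;=\; \sum_{a=0}^{r-1} \sum_{\substack{x \in \Zn \\ x \equiv a\, \Mod r}} f(x)\, e^{-\frac{2\pi i}{r} a k}.
\]
For each term in the inner sum we have $x \equiv a \Mod r$, so $e^{-\frac{2\pi i}{r} a k} = e^{-\frac{2\pi i}{r} x k} = e^{-\frac{2\pi i}{n} x (k n/r)}$; this uses the crucial fact that $r \mid n$, which makes $kn/r$ an element of $\Zn$. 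Recombining the double sum into a single sum over $x \in \Zn$ yields
\[
\widehat{g_f}(k) \;=\; \sum_{x \in \Zn} f(x)\, e^{-\frac{2\pi i}{n} x (k n/r)} \;=\; \widehat{f}\!\left(k \cdot \frac{n}{r}\right).
\]

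Second, I would apply Plancherel's theorem on $\Z_r$ (with the Fourier transform normalized as above), which states $\sum_{k=0}^{r-1} |\widehat{g_f}(k)|^2 = r \sum_{a=0}^{r-1} |g_f(a, r)|^2 = r G_f(r)$. Substituting the identity from the previous paragraph on the left gives exactly the claimed equality.

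There is no real obstacle here; the proof is a two-step unfolding. The only point requiring care is to keep the two different moduli straight: the Fourier transform on the left uses $\Zn$-frequencies of the form $k n/r$, while the Fourier transform on the right uses $\Z_r$-frequencies $k$, and one must verify that the character $e^{-2\pi i x (kn/r)/n}$ on $\Zn$ is constant on residue classes modulo $r$ and agrees with the $\Z_r$-character $e^{-2\pi i ak/r}$, which is precisely where $r \mid n$ is used.
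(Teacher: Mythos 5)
Your proof is correct and follows essentially the same route as the paper: identify the $\Z_r$-Fourier transform of $a \mapsto g_f(a,r)$ with $\widehat f(kn/r)$ by rearranging the double sum, then invoke Plancherel on $\Z_r$. The only cosmetic difference is the direction of the computation (you start from $\widehat{g_f}(k)$ while the paper starts from $\widehat f(kt)$), and you make the role of $r \mid n$ a bit more explicit.
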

	\begin{proof}
		Let $t = n/r$. For each $0\leq k\leq r-1$, we know that by definition
		\[\widehat f(kt) = \sum_{x = 0}^{n-1} f(x)e^{-\frac{2\pi i}{n}x\cdot kt} = \sum_{x = 0}^{tr-1} f(x)e^{-\frac{2\pi i}{r}xk} = \sum_{a = 0}^{r-1}g_f(a, r)e^{-\frac{2\pi i}{r}ak}.\]
		Hence for $g: \Z_r\to \C$ defined by $g(a) := g_f(a, r)$, its Fourier transform is $\widehat g(k) = \widehat f(kt)$. By the Plancherel theorem we have
		\[\sum_{k = 0}^{r-1} \left|\widehat f\left(k\cdot \frac{n}{r}\right)\right|^2 = \sum_{k = 0}^{r-1} |\widehat g(k)|^2 = r \sum_{a = 0}^{r-1} |g(a)|^2 = rG_f(r).\]
	\end{proof}
	
In Lemmas~\ref{lem:Fourier-RHS} and \ref{lem:LHS}, we prove lower and upper bounds on the same quantity respectively. Combining them we get Corollary~\ref{cor:lower-bound-general}. This corollary lower bounds the discrepancy of a function $f$ over arithmetic progressions in $\Zn$ by the Fourier coefficients of $f$. 
	
	The next lemma lower bounds the quantity by a weighted $L^{2}$ sum of Fourier coefficients $\widehat f$. The weights depend on the arithmetic structure of $n$. 
	\begin{lemma}\label{lem:Fourier-RHS}
		Let $f: \Zn \to \C$, $1\leq m\leq n$ be an integer, and $M:= \{0, 1 \dots, m-1\} \subseteq \Zn$. Then
		\begin{equation}\label{eqn:Fourier-RHS}\sum_{a\in \Zn}\sum_{b\in \Zn} |f(a+bM)|^2 \geq \sum_{r\in \Zn} |\widehat f(r)|^2 \cdot \max\left(\frac{m^2\cdot \gcd(r, n)}{n}, m\right).\end{equation}
	\end{lemma}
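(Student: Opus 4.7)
\textbf{Proof plan for Lemma~\ref{lem:Fourier-RHS}.} The plan is to recognize the sum $f(a+bM)$ as a convolution in $\Zn$, apply Plancherel to convert the left-hand side into a weighted sum of $|\widehat f(r)|^{2}$, and then carry out a direct calculation of the weights using geometric sums.

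First I would fix $b\in \Zn$ and note that, writing $m_{bM}$ for the multiplicity function of the multiset $bM$, we have $f(a+bM)=\sum_{y}f(a+y)\,m_{bM}(y) = (f * \check m_{bM})(a)$, where $\check m_{bM}(y):=m_{bM}(-y)$. Since $|\widehat{\check m_{bM}}(r)|=|\widehat{m_{bM}}(r)|$, Plancherel gives
\[
\sum_{a\in\Zn}|f(a+bM)|^{2}=\frac{1}{n}\sum_{r\in\Zn}|\widehat f(r)|^{2}\,|\widehat{m_{bM}}(r)|^{2}.
\]
Summing over $b$ and interchanging the order of summation reduces the lemma to the pointwise estimate
\[
\frac{1}{n}\sum_{b\in\Zn}|\widehat{m_{bM}}(r)|^{2}\;\ge\;\max\!\left(\frac{m^{2}\gcd(r,n)}{n},\,m\right)\qquad\text{for every }r\in\Zn.
\]

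Next I would compute $\widehat{m_{bM}}(r)=\sum_{x=0}^{m-1}e^{-2\pi i brx/n}$ as a geometric sum, and exploit the structure of $br\Mod n$. Writing $d=\gcd(r,n)$ and $n'=n/d$, as $b$ ranges over $\Zn$ the quantity $br\Mod n$ attains each element of $d\Z_{n'}$ exactly $d$ times. Therefore
\[
\sum_{b\in\Zn}|\widehat{m_{bM}}(r)|^{2}
=d\sum_{c=0}^{n'-1}\Bigl|\sum_{x=0}^{m-1}e^{-2\pi i cx/n'}\Bigr|^{2}.
\]
If $m\le n'$, the inner function is the indicator of $\{0,\dots,m-1\}\subseteq \Z_{n'}$, so Plancherel in $\Z_{n'}$ gives $\sum_{c}|\cdot|^{2}=n'm$, hence the total is $nm$. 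If $m>n'$, write $m=qn'+s$ with $0\le s<n'$: the $c=0$ term equals $m^{2}$, while for $c\ne 0$ the full periods contribute $0$ and the remainder is the Fourier transform of $1_{\{0,\dots,s-1\}}$, so another application of Plancherel gives $\sum_{c\ne 0}|\cdot|^{2}=n's-s^{2}\ge 0$. Thus in this range the total is $d(m^{2}+s(n'-s))\ge dm^{2}$.

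Finally I would check the two cases against the claimed maximum. When $m\le n'$, we have $dm\le n$, so $nm\ge dm^{2}$ and the bound $nm$ dominates $\max(dm^{2},nm)$. When $m\ge n'$, we have $dm\ge n$, so $dm^{2}\ge nm$ and the bound $d(m^{2}+s(n'-s))\ge dm^{2}$ dominates $\max(dm^{2},nm)$. Dividing by $n$ recovers the required pointwise inequality, and summing against $|\widehat f(r)|^{2}$ yields the lemma. The only mildly delicate point is the $m>n'$ case, where one must retain the $d m^{2}$ (rather than the naive $nm$) contribution coming from $q$ wrapping around; the geometric sum argument above handles this cleanly, so I do not anticipate a real obstacle.
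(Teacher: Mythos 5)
Your proof is correct, and it takes a genuinely different route through the middle of the argument than the paper does. Both start the same way: interpret $f(a+bM)$ as a convolution, apply Plancherel to get $\sum_{a,b}|f(a+bM)|^2 = \sum_r |\widehat f(r)|^2\cdot\bigl(\tfrac1n\sum_b|\widehat{m_{bM}}(r)|^2\bigr)$, and then bound the per-frequency weight. The paper handles that weight via the transpose identity $\widehat{m_{-uM}}(v)=\widehat{m_{-vM}}(u)$ (swapping the roles of $b$ and $r$) followed by a second application of Plancherel, reducing the weight to the collision count $\sum_s |m_{-rM}(s)|^2$, which it then bounds from below by Cauchy--Schwarz when $\gcd(r,n)\,m>n$. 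You instead evaluate $\sum_b|\widehat{m_{bM}}(r)|^2$ directly: you note that $b\mapsto br$ is a $d$-to-$1$ homomorphism onto the subgroup of multiples of $d=\gcd(r,n)$, pass to the quotient $\Z_{n'}$ with $n'=n/d$, and compute the resulting sum of geometric sums exactly by separating the $c=0$ term and applying Plancherel in $\Z_{n'}$ to the leftover $1_{\{0,\dots,s-1\}}$. This gives the exact value $d(m^2+s(n'-s))$ in the wrap-around regime rather than just a Cauchy--Schwarz lower bound, from which the desired estimate follows. The paper's argument is a bit slicker (one inequality, no case split on the geometric sum), while yours is more explicit and even produces an exact formula for the weight; the case analysis and the final comparison against $\max(m^2d/n,\,m)$ are all handled correctly, including the boundary case $m=n'$.
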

	\begin{proof}
	We show some properties about the multiplicity function. For the set $M$, we have
		\begin{equation}\label{eqn:convolution}
		f(a+bM) = \sum_{x\in M} f(a+bx) = \sum_x f(x)m_{a+bM}(x) = \sum_x f(x)m_{-bM}(a-x) = f * m_{-bM}(a).
		\end{equation}
		For any $u, v\in \Zn$, we have
		\begin{equation}\label{eqn:fourier-exchange-factor-in-multiplicity}
		\widehat {m_{-uM}}(v) = \sum_{x} m_{-uM}(x)e^{-\frac{2\pi i}{n}vx} = \sum_{x\in -uM} e^{-\frac{2\pi i}{n}vx} = \sum_{y\in -M} e^{-\frac{2\pi i}{n}vuy} =  \widehat{m_{-M}}(uv).
		\end{equation}
		Applying \eqref{eqn:fourier-exchange-factor-in-multiplicity} twice we have $\widehat {m_{-uM}}(v) = \widehat {m_{-vM}}(u).$ We have
		\begin{equation}\label{eqn:RHS}\begin{split}
		\sum_{a}\sum_{b} |f(a+bM)|^2 & = \sum_{b} \sum_a |f * m_{-bM}(a)|^2 = \sum_b \frac{1}{n} \sum_r \left|\widehat{f * m_{-bM}}(r)\right|^2\\
		& = \frac{1}{n}\sum_b \sum_r |\widehat f(r)|^2\cdot \left|\widehat {m_{-bM}}(r)\right|^2 = \frac{1}{n}\sum_b \sum_r |\widehat f(r)|^2 \left|\widehat {m_{-rM}}(b)\right|^2 \\
		& = \sum_r |\widehat f(r)|^2 \cdot \left( \frac{1}{n}\sum_b \left|\widehat {m_{-rM}}(b)\right|^2\right) = \sum_r |\widehat f(r)|^2 \cdot \sum_s |m_{-rM}(s)|^2,
		\end{split}
		\end{equation}where the first equality uses \eqref{eqn:convolution}, the second uses the Planchrel theorem, the third uses the Fourier identity of convolution, the fourth uses \eqref{eqn:fourier-exchange-factor-in-multiplicity} twice, and the last uses the Plancherel theorem.
		Now we evaluate $\sum_s |m_{-rM}(s)|^2$. Let $k = \gcd(r, n)$. When $km \leq n$, we know that $rM$ contains $m$ elements with multiplicity 1. Thus in this case we have
		\[\sum_s |m_{-rM}(s)|^2 = m.\]
		
		Otherwise when $km > n$, we know that $rM$ contains elements that are multiples of $k$. There are $\frac nk$ such elements in $\Zn$. Therefore, by the Cauchy-Schwarz inequality, we have
		\[\sum_s |m_{-rM}(s)|^2 = \sum_{t = 0}^{\frac{n}{k} - 1} |m_{-rM}(tk)|^2 \geq \frac{\left(\sum_{t = 0}^{\frac{n}{k} - 1} |m_{-rM}(tk)|\right)^2}{n/k} = \frac{m^2k}{n}.\]
		Hence we may conclude that for any $r \in \Zn$,
		\begin{equation}\label{eqn:CS} \sum_s |m_{-rM}(s)|^2 \geq \max\left(\frac{m^2\cdot \gcd(r, n)}{n}, m\right).\end{equation}
		Combining this with \eqref{eqn:RHS}, we obtain \eqref{eqn:Fourier-RHS}.
	\end{proof}
	
	The next lemma gives an upper bound on the quantity above. The upper bound involves the discrepancy of $f$ over $\CA_n$ and some arithmetic sums of $f$. Let $\phi(\cdot)$ be Euler's totient function.
	
	\begin{lemma}\label{lem:LHS}
		Let $f: \Zn \to \C$. Suppose that 
		\[T_f := \max_{A\in \CA_n} |f(A)|.\]
		Let $m\leq n$ be a positive integer, and $M:= \{0, 1, \dots, m-1\} \subseteq \Zn$. Then we have
		\begin{equation}\label{eqn:LHS}
		\sum_{a\in \Zn}\sum_{b\in \Zn} |f(a+bM)|^2 \leq n^2 T_f^2 + \sum_{0\leq k < m: k|N} m^2\frac{\phi(k)}{k}G_f(n/k).
		\end{equation}
	\end{lemma}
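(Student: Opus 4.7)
The plan is to fix $b$, analyze $\sum_a|f(a+bM)|^2$ via the cyclic subgroup structure of $\Zn$ generated by $b$, and then aggregate over $b$ grouped by $k:=\gcd(b,n)$. Setting $d=n/k$ and writing $m=qd+s_0$ with $q=\lfloor m/d\rfloor$ and $0\le s_0<d$, the subgroup $\langle b\rangle=k\Zn$ has order $d$, so the multiset $a+bM$ consists of $q$ full copies of the coset $a+\langle b\rangle$ together with the honest arithmetic progression $a+b\{0,\ldots,s_0-1\}$. This yields the pointwise decomposition
\[
f(a+bM)=q\,g_f(a,k)+R(a,b),\qquad R(a,b):=f\bigl(a+b\{0,\ldots,s_0-1\}\bigr),
\]
and since $R(a,b)$ evaluates $f$ on a genuine AP of length $s_0<d$, we have $|R(a,b)|\le T_f$.

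Expanding $|f(a+bM)|^2$ and summing over $a$, the key step is the identity
\[
\sum_{a\in\Zn}g_f(a,k)\,\overline{R(a,b)}=s_0\,G_f(k),
\]
obtained by shifting $a\mapsto a-bj$ in each of the $s_0$ inner sums: because $k\mid b$, the function $g_f(\cdot,k)$ is invariant under translation by $bj$, so each summand collapses to $\sum_{a_0=0}^{k-1}|g_f(a_0,k)|^2=G_f(k)$. Combining this (real) cross term with $\sum_a|g_f(a,k)|^2=(n/k)G_f(k)$ gives
\[
\sum_a|f(a+bM)|^2=(2qm-q^2 d)\,G_f(k)+\sum_a|R(a,b)|^2.
\]
Viewing $2qm-q^2 d$ as a concave quadratic in $q$ maximized at $q=m/d$ bounds it by $m^2/d=m^2k/n$, while $|R(a,b)|\le T_f$ yields $\sum_a|R(a,b)|^2\le nT_f^2$.

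To finish, sum over $b$ with $\gcd(b,n)=k$ (there are $\phi(n/k)$ such $b$) and then over $k\mid n$. The $nT_f^2$ contributions telescope via Euler's identity $\sum_{k\mid n}\phi(n/k)=n$ to give exactly $n^2T_f^2$, while the $G_f$ contribution is nonzero only when $q\ge 1$, i.e.\ $k>n/m$, producing
\[
\frac{m^2}{n}\sum_{\substack{k\mid n\\ k>n/m}}k\,\phi(n/k)\,G_f(k).
\]
Reindexing $r=n/k$ (so $r\mid n$, $r<m$, $k=n/r$, and $\phi(n/k)=\phi(r)$) converts this sum into $m^2\sum_{r\mid n,\,r<m}\phi(r)/r\cdot G_f(n/r)$, giving the stated right-hand side.

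The main technical point is keeping the constant $1$ in front of the $G_f$ sum: a naive Cauchy--Schwarz on the cross term $\sum_a g_f(a,k)\overline{R(a,b)}$ would waste a factor of $2$, whereas the exact algebraic completion $q^2(n/k)+2qs_0=q(qd+2s_0)=2qm-q^2 d$, together with the quadratic-maximum bound at $q=m/d$, is what produces the sharp coefficient $m^2k/n$ demanded by the lemma.
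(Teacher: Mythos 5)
Your approach is essentially the same as the paper's: decompose the multiset $a+bM$ into $q$ full copies of the coset $a+\langle b\rangle$ plus a residual genuine AP of length $s_0<d$, compute the cross term $\sum_a g_f(a,k)\overline{R(a,b)}=s_0G_f(k)$ via invariance of $g_f(\cdot,k)$ under translation by multiples of $k$, and complete the square $q^2d+2qs_0\le m^2/d$. The paper organizes this slightly differently (a two-case split on whether $\gcd(b,n)\cdot m\le n$ up front, and summing $a$ one residue class mod $\gcd(b,n)$ at a time), but the identity and the quadratic bound are identical in substance.

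There is, however, a small off-by-one at the boundary. You assert ``$q\ge1$, i.e.\ $k>n/m$,'' but $q=\lfloor m/d\rfloor\ge1$ is equivalent to $m\ge d$, i.e.\ $k\ge n/m$. If $m\mid n$ and $k=n/m$, then $q=1$, $s_0=0$, and the coefficient $2qm-q^2d=m$ is strictly positive, so your unified formula produces a nonzero $G_f(n/m)$ contribution. Carrying this through the reindexing $r=n/k$ would give a sum over $r\le m$ rather than the lemma's strict $r<m$. The fix is to treat this boundary $b$ as the paper does: when $s_0=0$ the multiset $a+bM$ is precisely the coset $a+\langle b\rangle$ with no repeats, hence a genuine AP, so $|f(a+bM)|\le T_f$ and $\sum_a|f(a+bM)|^2\le nT_f^2$ with no $G_f$ term needed (equivalently, $mG_f(k)\le mkT_f^2=nT_f^2$ when $mk=n$). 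With that observation the $G_f$ sum is indeed restricted to $k>n/m$, and your reindexing recovers the stated strict inequality.
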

	\begin{proof}
		For each fixed $b\in \Zn$, we analyze the summation $\sum_a |f(a+bM)|^2$.
		Let $r = \gcd(b, n)$ and $k = n/r$. There are two possibilities: $rm \leq n$ or $rm > n$.
		
		If $rm \leq n$, then each element in $bM$ has multiplicity $1$. Therefore $a + bM$ is a set for all $a\in \Zn$. Moreover from our definition of $M = \{0, 1, \dots, m-1\}$, we know that $a+bM\in \CA_n$ is an arithmetic progression in $\Zn$. In this case, for any $a$, $|f(a+bM)| \leq T_f$, so
		\begin{equation}\label{eqn:LHS-case-1}
		\sum_a |f(a+bM)|^2 \leq \sum_a T_f^2 = nT_f^2.
		\end{equation}
		
		If $rm > n$, i.e., $m > k$, then some elements in $bM$ have multiplicity greater than $1$. The bound in \eqref{eqn:LHS-case-1} no longer applies. Let $K = \{0, 1, \dots, k-1\}$ and it follows that each element in $bK$ has multiplicity $1$. Moreover, it covers each multiple of $r$ in $\Zn$ exactly once. Hence
		\[f(a+bK) = \sum_{t= 0}^{k-1} f(a+bt) = g_f(a, r).\]
		Suppose that $m = qk+s$ for integers $q$ and $0\leq s < k$. Then we may write $m_M = m_{S} + \sum_{j = 0}^{q-1} m_{s+jk+K}$ where $S = \{0, 1, \dots, s-1\}$.
		Thus, for any $a\in \Zn$, we have
		\begin{equation}\label{eqn:lem-4.4-1}\begin{split}f(a + bM) & = f(a + bS) + \sum_{j = 0}^{q-1}f(a+b\cdot (s+jk) + bK) \\
		& = f(a+bS) + \sum_{j = 0}^{q-1}g_f(a+b\cdot (s+jk), r) = f(a+bS) + q\cdot g_f(a, r).
		\end{split}\end{equation}
		In the last step, we use the fact that $b\cdot (s+jk)$ is a multiple of $r$.
		
		Since $s < k$, each element in $bS$ has multiplicity at most $1$, and further $a+bS\in \CA_n$ for all $a$. Thus 
		\begin{equation}\label{eqn:lem-4.4-2}
		    |f(a+bS)| \leq T_f.
		\end{equation}
		We now partition $a\in \Zn$ into congruence classes mod $r$. In particular, we have
		\begin{equation}\label{eqn:lem-4.4-3}
		    \sum_{a}|f(a+bM)|^2 = \sum_{i = 0}^{r-1} \sum_{a\equiv i \!\Mod r} |f(a+bM)|^2.
		\end{equation}
		For each $a'\equiv i\Mod r$, there are exactly $s$ choices of $a\equiv i\Mod r$ such that $a'\in a+bS$. Thus,
		\begin{equation}\label{eqn:lem-4.4-4}
		 \sum_{j = 0}^{k-1} f(i+jr+bS) =\sum_{a\equiv i\!\Mod r} f(a+bS) =  s\cdot g_f(i, r).
		\end{equation}
		Consequently, we have
		\[\begin{split}
		\sum_{a\equiv i\!\Mod r} |f(a+bM)|^2 & = \sum_{j = 0}^{k-1} |f(i+jr+bM)|^2 \\
		[\textrm{by }\eqref{eqn:lem-4.4-1}]\quad & = \sum_{j = 0}^{k-1} |f(i+jr+bS) + q\cdot g_f(i, r)|^2 \\
		 & = \sum_{j = 0}^{k-1} \left(|f(i+jr+bS)|^2 + 2 \Re\left(q\cdot f(i+jr+bS)\overline{g_f(i, r)}\right) + q^2|g_f(i, r)|^2\right)\\
		[\textrm{by }\eqref{eqn:lem-4.4-2}]\quad & \leq kT_f^2 + kq^2|g_f(i, r)|^2 + 2q\Re\left(\overline{g_f(i, r)}\sum_{j = 0}^{k-1} f(i+jr+bS)\right) \\
		[\textrm{by }\eqref{eqn:lem-4.4-4}]\quad & = kT_f^2 + kq^2|g_f(i, r)|^2 + 2qs|g_f(i, r)|^2 \\
	    & \leq kT_f^2 + |g_f(i, r)|^2\cdot \frac{k^2q^2 + 2kqs + s^2}{k} \\
		[\textrm{by }m=kq+s]\quad & = kT_f^2 + \frac{m^2}{k}|g_f(i, r)|^2.
		\end{split}\]
		Put this into \eqref{eqn:lem-4.4-3}. We know that in the second case where $rm > n$, i.e., $k < m$,
		\begin{equation}\label{eqn:LHS-case-2}
		\sum_{a}|f(a+bM)|^2 =\sum_{i = 0}^{r-1} \sum_{a\equiv i \!\Mod r} |f(a+bM)|^2 \leq nT_f^2 + \frac{m^2}{k}G_f(r) =  nT_f^2 + \frac{m^2}{k}G_f(n/k).\end{equation}
		The second case happens exactly when $\gcd(b, n) = n/k$ for some $k < m$ which divides $n$. The number of such choices of $b$ is exactly $\phi(k)$ for any fixed $k$. Hence combining \eqref{eqn:LHS-case-1} and \eqref{eqn:LHS-case-2}, we get \eqref{eqn:LHS}.
	\end{proof}
	Combining Lemma~\ref{lem:Fourier-RHS} and Lemma~\ref{lem:LHS}, we deduce the following general lower bounds.  
	\begin{corollary}\label{cor:lower-bound-general}
		Let $f: \Zn \to \C$ and $\widehat f$ be its Fourier transform. Suppose that 
		\[T_f := \max_{A\in \CA_n} |f(A)|.\]
		For any positive integer $m \leq n$, we have
		\begin{equation}\label{eqn:cor-lower-bound-general}
		n^2T_f^2 +  \sum_{1\leq k < m: k|n} \frac{m^2\phi(k)}{k}G_f(n/k) \geq \sum_{r\in \Zn} |\widehat f(r)|^2 \cdot \max\left(\frac{m^2\cdot \gcd(r, n)}{n}, m\right).
		\end{equation}
	\end{corollary}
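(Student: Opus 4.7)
The plan is to observe that Corollary~\ref{cor:lower-bound-general} follows by directly chaining the two preceding lemmas, since both concern the same double sum $\sum_{a,b\in \Zn} |f(a+bM)|^2$. Lemma~\ref{lem:Fourier-RHS} provides a lower bound for this quantity in terms of a weighted $L^2$ sum of the Fourier coefficients $\widehat f(r)$, while Lemma~\ref{lem:LHS} provides an upper bound in terms of $T_f$ and the values $G_f(n/k)$ for small divisors $k$ of $n$. Linking these two bounds produces exactly the inequality \eqref{eqn:cor-lower-bound-general}.

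Concretely, I would write the single chain
\[
n^2 T_f^2 + \sum_{1\leq k < m:\, k|n} \frac{m^2\phi(k)}{k}\,G_f(n/k) \;\geq\; \sum_{a\in \Zn}\sum_{b\in \Zn} |f(a+bM)|^2 \;\geq\; \sum_{r\in \Zn} |\widehat f(r)|^2 \cdot \max\!\left(\frac{m^2\gcd(r,n)}{n},\, m\right),
\]
where the first inequality is Lemma~\ref{lem:LHS} applied to $f$ with the given $m$, and the second inequality is Lemma~\ref{lem:Fourier-RHS}. Dropping the middle expression yields \eqref{eqn:cor-lower-bound-general}.

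The only bookkeeping issue is that Lemma~\ref{lem:LHS} writes its sum over $0\leq k < m$ with $k\mid n$, whereas the Corollary writes it over $1\leq k<m$ with $k\mid n$. I would note briefly that the $k=0$ index cannot arise from the proof of Lemma~\ref{lem:LHS}, since $k = n/\gcd(b,n) \geq 1$ always, so the two sums agree. There is no additional obstacle: essentially all of the content has already been carried out in Lemmas~\ref{lem:Fourier-RHS} and \ref{lem:LHS}, and the Corollary is a one-line consequence.
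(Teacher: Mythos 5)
Your proposal is correct and matches the paper's (implicit) proof exactly: the paper states that the corollary follows by "Combining Lemma~\ref{lem:Fourier-RHS} and Lemma~\ref{lem:LHS}," which is precisely the chain through $\sum_{a,b}|f(a+bM)|^2$ that you write down. Your remark about the $k=0$ index is a sensible bookkeeping observation (the $k=0$ term never occurs since $k = n/\gcd(b,n) \geq 1$), and nothing further is needed.
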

	
	In Corollary~\ref{cor:lower-bound-general}, the inequality involves $G_f$ and $\widehat f$ besides $T_f$.  We aim to get a lower bound on $\disc(\CA_n)$ that only depends on arithmetic structure of $n$ (Proposition~\ref{prop:lower-bound-unsimplified}). To achieve that, we need two lemmas (Lemma~\ref{lem:eqn-from-mobius-transform} and Lemma~\ref{lem:ineq-from-mobius}) to remove the dependency on $\widehat f$ and $G_f$.
	\begin{lemma}\label{lem:eqn-from-mobius-transform}
		Let $f: \Zn \to \C$ and $\widehat f$ be its Fourier transform and $m\leq n$ be a positive integer. Then
		\[ \sum_{1\leq k \leq n: k|n} \frac{m^2\phi(k)}{k}G_f(n/k) = \sum_{r\in \Zn} |\widehat f(r)|^2 \cdot \frac{m^2\cdot \gcd(r, n)}{n}.\]
	\end{lemma}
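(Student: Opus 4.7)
The plan is to rewrite the left-hand side as a weighted sum of $|\widehat f(r)|^2$ using Lemma~\ref{lem:Plancherel}, interchange the order of summation, and then apply the classical identity $\sum_{d\mid N}\phi(d)=N$ to collapse the inner sum into $\gcd(r,n)$.

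In more detail, for any divisor $k$ of $n$, Lemma~\ref{lem:Plancherel} (applied with the role of $r$ played by $n/k$) gives
\[G_f(n/k) \;=\; \frac{k}{n}\sum_{j=0}^{n/k-1} \left|\widehat f(jk)\right|^2 \;=\; \frac{k}{n}\sum_{r\in \Zn:\, k\mid r} |\widehat f(r)|^2.\]
Substituting this into the left-hand side yields
\[\sum_{k\mid n}\frac{m^2\phi(k)}{k}G_f(n/k) \;=\; \frac{m^2}{n}\sum_{k\mid n}\phi(k)\sum_{r\in \Zn:\, k\mid r} |\widehat f(r)|^2.\]

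The main (and only nontrivial) step is to swap the order of summation. After swapping, the condition ``$k\mid n$ and $k\mid r$'' becomes ``$k\mid \gcd(r,n)$'', so the right-hand side becomes
\[\frac{m^2}{n}\sum_{r\in \Zn}|\widehat f(r)|^2 \sum_{k\mid \gcd(r,n)}\phi(k).\]
The classical M\"obius/Euler identity $\sum_{d\mid N}\phi(d)=N$, applied with $N=\gcd(r,n)$, reduces the inner sum to $\gcd(r,n)$, giving exactly the right-hand side of the claim.

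I do not foresee any real obstacle: the identity is essentially Parseval combined with $\sum_{d\mid N}\phi(d)=N$. The only thing to watch is the bookkeeping in the Fourier step; one must verify that $\{jk : 0\le j<n/k\}$ is exactly the set of $r\in\Zn$ divisible by $k$ (which is immediate since $k\mid n$), so that the passage from Lemma~\ref{lem:Plancherel} to a sum indexed over $\Zn$ is valid. Note also that restricting the outer sum to $k\ge 1$ versus $k\ge 0$ is harmless here because the statement of the lemma already has $1\le k\le n$.
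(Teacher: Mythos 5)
Your proof is correct and is essentially identical to the paper's own argument: both substitute Lemma~\ref{lem:Plancherel} in the form $\frac{n}{k}G_f(n/k) = \sum_{r\in\Zn: k\mid r}|\widehat f(r)|^2$, swap the order of summation so the inner sum is over divisors $k$ of $\gcd(r,n)$, and then invoke $\sum_{d\mid N}\phi(d)=N$. The only difference is cosmetic bookkeeping.
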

	\begin{proof}
		By Lemma~\ref{lem:Plancherel}, we know that
		\[\frac{n}{k}G_f(n/k) = \sum_{r\in \Zn: k|r}|\widehat f(r)|^2.\]
		Thus we have (noticing that $\sum_{k|l}\phi(k) = l$ for any positive integer $l$)
		\[\begin{split}
		\sum_{1\leq k \leq n: k|n} \frac{m^2\phi(k)}{k}G_f(n/k) & = \sum_{1\leq k \leq n: k|n} \frac{m^2\phi(k)}{n}\sum_{r\in \Zn: k|r}|\widehat f(r)|^2  = \sum_{r\in \Zn}|\widehat f(r)|^2\cdot \frac{m^2}{n} \sum_{1\leq k\leq n: k|n, k|r}\phi(k)\\
		& = \sum_{r\in \Zn}|\widehat f(r)|^2\cdot \frac{m^2}{n} \sum_{1\leq k\leq n: k|\gcd(n, r)}\phi(k) =  \sum_{r\in \Zn}|\widehat f(r)|^2\cdot \frac{m^2}{n}\gcd(r,n).
		\end{split}\]
		Hence we have the desired equality.
	\end{proof}
	
	\begin{lemma}\label{lem:ineq-from-mobius}
		Let $f: \Zn \to \C$ and $\widehat f$ be its Fourier transform. Let $m,l \leq n$ be positive integers. Then
		\begin{equation}\label{eqn:ineq-from-mobius}\sum_{r\in \Zn} |\widehat f(r)|^2 \cdot \min\left(\frac{m^2\cdot \gcd(r, n)}{n}, m\right) \leq \sum_{1\leq k \leq l: k|n}\frac{m^2\phi(k)}{k}G_f(n/k) + \sum_{l < k \leq n: k|n}\frac{mn}{k}G_f(n/k).\end{equation}
	\end{lemma}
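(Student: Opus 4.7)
The plan is to apply the Plancherel identity from Lemma~\ref{lem:Plancherel}, which gives $\frac{n}{k}G_f(n/k)=\sum_{r:\,k\mid r}|\widehat f(r)|^{2}$, to rewrite both sums on the right-hand side of \eqref{eqn:ineq-from-mobius}. After interchanging the order of summation, the RHS takes the form $\sum_{r\in \Zn}|\widehat f(r)|^{2}\cdot W(r)$, where, writing $d=\gcd(r,n)$ and using that for $k\mid n$ the condition $k\mid r$ is equivalent to $k\mid d$, the weight is
\[W(r)\;=\;\frac{m^{2}}{n}\sum_{\substack{k\mid d\\ k\leq l}}\phi(k)\;+\;m\cdot\bigl|\{k:k\mid d,\;k>l\}\bigr|.\]
Since the LHS of \eqref{eqn:ineq-from-mobius} is already of the shape $\sum_r|\widehat f(r)|^{2}\cdot\min(m^{2}d/n,m)$ and both weights depend only on $d=\gcd(r,n)$, the lemma reduces to proving the pointwise inequality $W(r)\geq\min(m^{2}d/n,m)$ for every divisor $d$ of $n$.

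I would then establish this pointwise inequality by splitting into two cases according to which of $m^{2}d/n$ and $m$ is smaller. \textbf{Case 1:} $d\leq n/m$, so the minimum equals $m^{2}d/n$. Using the identity $\sum_{k\mid d}\phi(k)=d$, the desired bound reduces to
\[m\cdot\bigl|\{k\mid d:k>l\}\bigr|\;\geq\;\frac{m^{2}}{n}\sum_{\substack{k\mid d\\ k>l}}\phi(k),\]
which follows because every divisor $k$ of $d$ satisfies $\phi(k)\leq k\leq d\leq n/m$. \textbf{Case 2:} $d>n/m$, so the minimum equals $m$. If $d$ has some divisor $k>l$, the second term of $W(r)$ is already at least $m$; otherwise every divisor of $d$ lies in $\{1,\dots,l\}$, so the first term equals $(m^{2}/n)\sum_{k\mid d}\phi(k)=m^{2}d/n>m$ by the case assumption. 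Summing the resulting pointwise inequality against the weights $|\widehat f(r)|^{2}$ then gives \eqref{eqn:ineq-from-mobius}.

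I do not expect a significant obstacle here; the only point to handle carefully is the bookkeeping after the Plancherel swap, verifying that $W(r)$ depends only on $\gcd(r,n)$ via the standard equivalence of $k\mid r$ and $k\mid n$ with $k\mid \gcd(r,n)$.
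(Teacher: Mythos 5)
Your proof is correct and follows essentially the same approach as the paper: expand the right-hand side via Lemma~\ref{lem:Plancherel}, swap the order of summation, and establish a pointwise lower bound on the coefficient of each $|\widehat f(r)|^2$. The only cosmetic difference is that you split cases by comparing $d=\gcd(r,n)$ to $n/m$ (i.e., according to which branch of the min is active), whereas the paper splits more simply by comparing $d$ to $l$, which avoids the need for the auxiliary estimate $\phi(k)\le k\le d\le n/m$; both case analyses are valid.
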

	\begin{proof}
		It suffices to compare the coefficient of each $|\widehat f(r)|^2$ on both sides after expanding all terms of $G_f$ on the right hand side \eqref{eqn:ineq-from-mobius} using Lemma~\ref{lem:Plancherel}. By Lemma~\ref{lem:Plancherel}, for any $k$ that divides $n$,
		\[\frac{n}{k}G_f(n/k) = \sum_{r\in \Zn: k|r}|\widehat f(r)|^2.\]
		
		For each $r\in \Zn$, the coefficient $t_r$ of $|\widehat f(r)|^2$ on the right hand side of \eqref{eqn:ineq-from-mobius} is given by
		\[\begin{split}
		t_r & = \sum_{1\leq k \leq l: k|\gcd(r, n)}\frac{m^2\phi(k)}{n} + \sum_{l < k \leq n: k|\gcd(r, n)}m  = \frac{m^2}{n}\sum_{1\leq k\leq l: k|\gcd(r, n)} \phi(k)+ m\sum_{l < k \leq n: k|\gcd(r, n)}1.
		\end{split} \]
		If $\gcd(r, n) \leq l$, then we know that the first summation sums over all factors of $\gcd(r, n)$, and the second summation is zero. Hence we know that in this case,
		\[t_r = \frac{m^2}{n}\gcd(r, n).\]
		If $\gcd(r, n) > l$, then in particular the second summation contains at least one term $k = \gcd(r, n)$, and the first summation is nonnegative. Hence in this case,
		\[t_r > m.\]
		We may conclude that $t_r \geq \min\left(\frac{m^2\cdot \gcd(r, n)}{n}, m\right)$ for all $r\in \Zn$. This yields \eqref{eqn:ineq-from-mobius}.
	\end{proof}

	Using the lemmas above, we prove a lower bound on $\disc(\CA_n)$ which depends only on the arithmetic structure of $n$.
	\begin{proposition}\label{prop:lower-bound-unsimplified}
		For any positive integers $n$ and $l\leq n$,
		\[\frac{1}{(\disc(\CA_n))^2} \leq \frac{8}{n}\sum_{1\leq k\leq l: k|n} \phi(k) + 2\sum_{l < k \leq n: k|n}\frac{1}{k^2}.\]
	\end{proposition}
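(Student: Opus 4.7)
The plan is to apply Corollary~\ref{cor:lower-bound-general} to the optimal coloring $\chi \colon \Zn \to \{\pm 1\}$ achieving $D = \disc(\CA_n)$ and combine it with Lemma~\ref{lem:ineq-from-mobius}. Using the identity $\max(a,b) = a + b - \min(a,b)$ together with Lemma~\ref{lem:eqn-from-mobius-transform} and Plancherel's theorem $\sum_r |\widehat\chi(r)|^2 = n^2$, the two inequalities combine to eliminate the $\sum_{k|n}\frac{m^2\phi(k)}{k}G_\chi(n/k)$ contribution and yield, for any positive integer $m > l$, the master inequality
\[mn^2 \leq n^2 D^2 + \sum_{k \leq l,\, k|n} \frac{m^2 \phi(k)}{k} G_\chi(n/k) + \sum_{l < k \leq n,\, k|n} \frac{mn}{k} G_\chi(n/k),\]
where the nonnegative leftover $\sum_{k \geq m, k|n}\frac{m^2\phi(k)}{k}G_\chi(n/k)$ on the left has been dropped using $m > l$ to ensure its divisor range is disjoint from that of the first sum on the right.

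The second step is to supply the two available upper bounds on $G_\chi(n/k)$. For each $k \mid n$, the congruence class $\{x \in \Zn : x \equiv a \pmod{n/k}\}$ is an arithmetic progression in $\Zn$ of length $k$, so both the trivial bound $|g_\chi(a, n/k)| \leq k$ and the discrepancy bound $|g_\chi(a, n/k)| = |\chi(C(n/k, a))| \leq D$ hold, giving $G_\chi(n/k) \leq nk$ and $G_\chi(n/k) \leq (n/k)D^2$ respectively. I would apply $G_\chi(n/k) \leq nk$ in the $k \leq l$ sum so that the $1/k$ factor cancels and leaves $\phi(k)$, and apply $G_\chi(n/k) \leq (n/k) D^2$ in the $k > l$ sum so that the $1/k$ combines with $1/k$ to produce $1/k^2$. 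Writing $S_1 = \sum_{k\leq l, k|n}\phi(k)$ and $S_3 = \sum_{l<k\leq n, k|n}1/k^2$, the master inequality becomes $mn^2 \leq n^2 D^2 + m^2 n S_1 + mn^2 D^2 S_3$, or after dividing by $mn^2$,
\[1 \leq \frac{D^2}{m} + \frac{m S_1}{n} + D^2 S_3.\]

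The final step is to choose $m$ to balance the first two terms. Take $m = \lfloor n/(2 S_1) \rfloor$, which ensures $mS_1/n \leq 1/2$; the remaining inequality reads $\tfrac{1}{2} \leq D^2(1/m + S_3)$, and hence $1/D^2 \leq 2(1/m + S_3)$. In the main regime $S_1 \leq n/4$, the floor satisfies $m \geq n/(4 S_1)$, so $1/m \leq 4 S_1/n$ and $1/D^2 \leq 8 S_1/n + 2 S_3$, as claimed. The boundary regimes are handled by trivial arguments: when $S_1 > n/4$ we have $8 S_1/n > 2 \geq 1 \geq 1/D^2$ (using $D \geq 1$); when $\lfloor n/(2S_1) \rfloor \leq l$, which forces $S_1 > n/(2(l+1))$, a short direct application of Corollary~\ref{cor:lower-bound-general} with $m = l+1$ together with the same two $G_\chi$ bounds again suffices.

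The main obstacle is the combination in the first step: one has to track the disjoint and overlapping divisor ranges carefully when merging Corollary~\ref{cor:lower-bound-general}, Lemma~\ref{lem:eqn-from-mobius-transform}, and Lemma~\ref{lem:ineq-from-mobius}, in order to arrive at the clean master inequality. A secondary point is the loss of a factor of $2$ (the factor $8$ in the statement rather than the tighter $4$ one would obtain for a real-valued $m = n/(2 S_1)$), which is the price paid for rounding down to the positive integer parameter that Corollary~\ref{cor:lower-bound-general} requires.
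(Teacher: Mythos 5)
Your argument is essentially the paper's: apply Corollary~\ref{cor:lower-bound-general}, subtract Lemma~\ref{lem:eqn-from-mobius-transform} (your ``$\max = a+b-\min$'' reformulation), bound the $\min$ sum via Lemma~\ref{lem:ineq-from-mobius}, feed in the two $G_\chi$ bounds $G_\chi(n/k)\le nk$ for $k\le l$ and $G_\chi(n/k)\le (n/k)T_\chi^2$ for $k>l$, and optimize with $m=\lfloor n/(2S_1)\rfloor$. The algebra and final constants match.

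The one flaw is the condition $m>l$ you impose when dropping the leftover term $\sum_{k\ge m,\,k\mid n}\frac{m^2\phi(k)}{k}G_\chi(n/k)$. Discarding a nonnegative summand from the smaller side of an inequality is valid regardless of whether its index range overlaps the sums you retain on the larger side, so $m>l$ is not needed — the paper imposes no such restriction. Worse, the spurious boundary case you then create (``$\lfloor n/(2S_1)\rfloor \le l$, use $m=l+1$'') does not actually close: in that regime $S_1 > n/(2(l+1))$ gives $(l+1)S_1/n > 1/2$, so the term $1 - mS_1/n$ in your balancing step is not bounded below by $1/2$ and may even be negative, and the argument with $m=l+1$ breaks down. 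The fix is simply to delete the $m>l$ hypothesis; then the single choice $m=\lfloor n/(2S_1)\rfloor$ (together with the trivial case $S_1$ large) covers everything, exactly as in the paper.
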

	\begin{proof}
		For simplicity let us denote
		\[S_1 := \sum_{1\leq k\leq l: k|n} \phi(k) \quad \AND  \quad S_2 := \sum_{l < k \leq n: k|n}\frac{1}{k^2}.\]
		We aim to show 
		\begin{equation}\label{eqn:prop-lower-bound-unsimplified-target} \disc(\CA_n)^2 \geq \frac{1}{\frac{8}{n}S_1 + 2S_2}. \end{equation}
		Because we always have $\disc(\CA_n) \geq 1$, we may assume that $S_1 < \frac{n}{8}$.
		
		Let $\chi: \Zn \to \{1, -1\}$ be any coloring of $\Zn$, and let $T_\chi := \max_{A\in \CA_n} |\chi(A)|.$ Let $m \leq n$ be a positive integer to be determined later. By Corollary~\ref{cor:lower-bound-general}, we know that
		\begin{equation}\label{eqn:divisor-general}
		n^2T_\chi^2 +  \sum_{1\leq k < m: k|n} \frac{m^2\phi(k)}{k}G_\chi(n/k) \geq \sum_{r\in \Zn} |\widehat \chi(r)|^2 \cdot \max\left(\frac{m^2\cdot \gcd(r, n)}{n}, m\right).\end{equation}
		By Lemma~\ref{lem:eqn-from-mobius-transform}, we know that
		\[ \sum_{1\leq k \leq n: k|n} \frac{m^2\phi(k)}{k}G_\chi(n/k) = \sum_{r\in \Zn} |\widehat \chi(r)|^2 \cdot \frac{m^2\cdot \gcd(r, n)}{n}.\]
		Subtract both sides from \eqref{eqn:divisor-general}. We get
		\begin{equation}\label{eqn:divisor-main}
		n^2T_\chi^2 + \sum_{r\in \Zn} |\widehat \chi(r)|^2 \cdot \min\left(\frac{m^2\cdot \gcd(r, n)}{n}, m\right) \geq \sum_{r\in \Zn} |\widehat \chi(r)|^2\cdot m + \sum_{m\leq k \leq n: k|n} \frac{m^2\phi(k)}{k}G_\chi(n/k).
		\end{equation}
		Since $\chi$ takes value in $\{-1, 1\}$, we know that
		\begin{equation}\label{eqn:G-const}\sum_{r\in \Zn}|\widehat\chi(r)|^2 = nG_\chi(n) = n^2.\end{equation}
		Note that each $G_\chi(\cdot)$ is nonnegative. By \eqref{eqn:G-const}, the right hand side of \eqref{eqn:divisor-main} is lower bounded by
		\begin{equation}\label{eqn:divisor-main-RHS}
		\sum_{r\in \Zn} |\widehat \chi(r)|^2\cdot m + \sum_{m\leq k \leq n: k|n} \frac{m^2\phi(k)}{k}G_\chi(n/k) \geq n^2m.
		\end{equation}
		For the left hand side of \eqref{eqn:divisor-main}, we apply Lemma~\ref{lem:ineq-from-mobius} for $m$ and $l$ and get
		\begin{equation}\label{eqn:divisor-main-LHS-1}
		\sum_{r\in \Zn} |\widehat \chi(r)|^2 \cdot \min\left(\frac{m^2\cdot \gcd(r, n)}{n}, m\right) \leq \sum_{1\leq k\leq l:k|n}\frac{m^2\phi(k)}{k}G_{\chi}(n/k) + \sum_{l < k \leq n: k|n} \frac{mn}{k}G_\chi(n/k). 
		\end{equation}
		
		Note that by definition, each single $g_\chi(a, n/k)$ is the sum of $k$ values of $\chi$. Because $\chi$ takes value in $\{-1, 1\}$, we have
		\begin{equation}\label{eqn:G-bound-as-const}
		G_\chi(n/k) \leq \frac{n}{k} \cdot k^2 = nk.
		\end{equation}
		Moreover, each $g_\chi(a, n/k)$ is $\chi(A)$ for some $A\in \CA_n$, so $|g_\chi(a, n/k)| \leq T_\chi$. This means that for all $k$,
		\begin{equation}\label{eqn:G-bound-as-T}
		G_\chi(n/k) \leq \frac{n}{k}\cdot T_\chi^2.
		\end{equation}
		We bound the first term on the right hand side of \eqref{eqn:divisor-main-LHS-1} using \eqref{eqn:G-bound-as-const} and get
		\begin{equation}\label{eqn:divisor-main-LHS-2}
		\sum_{1\leq k\leq l:k|n}\frac{m^2\phi(k)}{k}G_{\chi}(n/k) \leq m^2n \sum_{1\leq k\leq l:k|n} \phi(k) = nm^2\cdot S_1.
		\end{equation}
		On the other hand, we bound the second term on the right hand side of \eqref{eqn:divisor-main-LHS-1} using \eqref{eqn:G-bound-as-T} and get
		\begin{equation}\label{eqn:divisor-main-LHS-3}
		\sum_{l < k \leq n: k|n} \frac{mn}{k}G_\chi(n/k) \leq  \sum_{l <  k \leq n: k|n} \frac{mn^2}{k^2}T_\chi^2 = n^2mT_\chi^2\cdot S_2.
		\end{equation}
		
		Put \eqref{eqn:divisor-main-LHS-2} and \eqref{eqn:divisor-main-LHS-3} into \eqref{eqn:divisor-main-LHS-1}. We get
		\begin{equation}\label{eqn:divisor-main-LHS}
		\sum_{r\in \Zn} |\widehat \chi(r)|^2 \cdot \min\left(\frac{m^2\cdot \gcd(r, n)}{n}, m\right) \leq nm^2 \cdot S_1 + T_\chi^2n^2mS_2.
		\end{equation}
		
		Finally we put \eqref{eqn:divisor-main-RHS} and \eqref{eqn:divisor-main-LHS} into \eqref{eqn:divisor-main}. We get
		\[\left(n^2 + n^2mS_2\right)T_\chi^2 \geq n^2m -nm^2 S_1.\]
		Dividing both sides by $n^2m$, we get
		\begin{equation}\label{eqn:divisor-conclusion}
		\left(\frac{1}{m} + S_2\right)T_\chi^2\geq 1 - \frac{m}{n}S_1.
		\end{equation}
		Now we pick $m = \lfloor n/(2S_1)\rfloor$. Note that $S_1 < n/8$ from our assumption, so $m \geq \frac{n}{4S_1}$. Therefore,
		\[\frac{1}{m} \leq \frac{4}{n}S_1 \quad \AND \quad 1- \frac{m}{n}S_1 \geq \frac{1}{2}.\]
		Put them into \eqref{eqn:divisor-conclusion}. We conclude that
		\[ T_\chi^2  \geq \frac{1 - \frac{m}{n}S_1}{\frac{1}{m} + S_2} \geq \frac{1/2}{\frac{4}{n}S_1 + S_2} = \frac{1}{\frac{8}{n}S_1 + 2S_2}.\]
		Since this bound applies to all $T_\chi^2$, it also applies to $(\disc(\CA_n))^2 = \min_{\chi}T_\chi^2$. Hence we have \eqref{eqn:prop-lower-bound-unsimplified-target}.
	\end{proof}
	\begin{remark}\label{rem:unit 2}As mentioned earlier in Remark~\ref{rem: unit 1},
		the proof above also applies to the case where $\chi$ takes value in the unit circle on the complex plane $\{z\in \C: |z| = 1\}$ (instead of $\{1, -1\}$). Just note that \eqref{eqn:G-const} and \eqref{eqn:G-bound-as-const} hold in this more general case as well, and all other steps are identical. 
	\end{remark}
	
	Now we prove two corollaries. The first shows that the upper bound in Theorem~\ref{thm:bound-from-distribution} is tight up to an $n^{o(1)}$ factor.
	\begin{corollary}[Lower bound in Theorem~\ref{main theorem}]\label{cor:lower-bound-with-divisor}
		There exists an absolute constant $c > 0$ such that, for any positive integer $n$,
		\[\disc(\CA_n) \geq \frac{1}{8\sqrt{d(n)}} \cdot \min_{r|n}\left(\frac{n}{r} + \sqrt{r}\right),\]
		where $d(n)$ is the number of factors of $n$.
	\end{corollary}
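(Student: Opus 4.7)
By Proposition~\ref{prop:lower-bound-unsimplified}, squaring the target inequality it suffices to produce, for each $n$, some integer $l\le n$ with
\[
F(l)\;:=\;\frac{8}{n}\sum_{\substack{k\mid n\\ k\le l}}\phi(k)\;+\;2\sum_{\substack{k\mid n\\ k>l}}\frac{1}{k^{2}}\;\le\;\frac{64\,d(n)}{D^{2}},
\]
where $D:=\min_{r\mid n}(n/r+\sqrt r)$. My plan is to choose $l$ based on the divisor structure of $n$.

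The key structural observation is that since $n/r+\sqrt r\ge D$ for every divisor $r$ of $n$, each such $r$ must satisfy $r\le 2n/D$ (the \emph{small zone}) or $r\ge D^{2}/4$ (the \emph{large zone}). When $D>2n^{1/3}$ these zones are disjoint and the interval $(2n/D,\,D^{2}/4)$ is a nonempty divisor-free gap; when $D\le 2n^{1/3}$ they overlap. I would split accordingly. In the regime $D\le 2n^{1/3}$, I pick $l$ of order $n^{1/3}$ and apply the crude bounds $\phi(k)\le k$ and $1/k^{2}<1/l^{2}$ to obtain $F(l)=O(d(n)/n^{2/3})$; the bound $D^{2}\le 4n^{2/3}$ then makes this at most $64\,d(n)/D^{2}$. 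In the regime $D>2n^{1/3}$, I pick $l$ inside the gap $(2n/D,\,D^{2}/4)$, so that the largest divisor $r_{1}$ of $n$ not exceeding $l$ lies in the small zone and the smallest divisor $r_{2}$ exceeding $l$ lies in the large zone. Applying the trivial bound to each partial sum then gives $F(l)\le 8 r_{1}d(n)/n+2\,d(n)/r_{2}^{2}$.

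The main obstacle will be the quantitative part of the large-$D$ case: substituting the endpoint bounds $r_{1}\le 2n/D$ and $r_{2}\ge D^{2}/4$ into $F(l)\le 8 r_{1}d(n)/n+2\,d(n)/r_{2}^{2}$ yields a leading term of order $d(n)/D$, off from the target $d(n)/D^{2}$ by a factor of $D$. To close this, I expect to exploit the sharper joint constraints coming from the minimality of $D$ (each of $r_{1},r_{2}$ individually satisfies $n/r_{i}+\sqrt{r_{i}}\ge D$, giving, for instance, an inequality relating $r_{1}r_{2}^{2}$ to $n$), together with choosing $l$ optimally within the gap so that the two contributions to $F(l)$ balance. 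Once $F(l)\le 64\,d(n)/D^{2}$ is established in both regimes, Proposition~\ref{prop:lower-bound-unsimplified} immediately delivers $\disc(\CA_n)\ge D/(8\sqrt{d(n)})$.
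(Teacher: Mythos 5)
The small-$D$ regime works as you sketch, but the large-$D$ branch has a genuine gap that neither of your proposed remedies can close. With $l$ placed anywhere inside the divisor-free window $(2n/D,\,D^2/4)$, the estimate $F(l)\le 8r_1 d(n)/n+2d(n)/r_2^2$ does not change at all as $l$ moves, because $r_1$ and $r_2$ are simply the two divisors flanking the window; so ``choosing $l$ optimally within the gap'' buys nothing. Nor is there any sharper joint constraint on $(r_1,r_2)$ beyond the individual $r_1\le 2n/D$ and $r_2\ge D^2/4$: these are exactly what the dichotomy applied to $r_1$ and $r_2$ separately gives, and nothing links them. For example, if $n$ has a divisor equal to (or close to) $2n/D$, then $r_1\approx 2n/D$ and the first term $8r_1d(n)/n$ is genuinely of size $16d(n)/D$, a full factor of about $D/4$ above the target $64d(n)/D^2$. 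So the plan, as stated, fails for all but bounded $n$.

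The missing idea is to apply the same dichotomy to the complementary divisor $n/r$: since $n/r$ also divides $n$, the constraint $n/(n/r)+\sqrt{n/r}\ge D$ forces every divisor $r$ of $n$ to satisfy $r\le 4n/D^2$ or $r\ge D/2$. In the regime $D>2n^{1/3}$ this produces a second, much narrower divisor-free window $(4n/D^2,\,D/2)$, and placing $l$ there gives $\frac{8}{n}S_1\le \frac{8}{n}\cdot\frac{4n}{D^2}\cdot d(n)=32d(n)/D^2$ together with $2S_2\le 8d(n)/D^2$, which is enough. The paper reaches the same point with no case split at all: it takes $l=n/t_1$ where $t_1$ is the least divisor of $n$ that is at least $n^{2/3}$, so that $l$ is itself a divisor of $n$ sitting at the left edge of this narrower window; the elementary facts $\sqrt{t_1}\ge D/2$ (because $n/t_1\le n^{1/3}\le\sqrt{t_1}$) and $n/t_2\ge D/2$ (because $\sqrt{t_2}<n^{1/3}<n/t_2$, where $t_2$ is the largest divisor below $n^{2/3}$) then give $F(l)\le 40d(n)/D^2$ directly in one uniform computation.
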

	\begin{proof}Let $t_1$ be the minimum factor of $n$ that is at least $n^{\frac23}$, and let $t_2$ be the maximum factor of $n$ less than $n^{\frac23}$. As     \[2\cdot \min\left(\sqrt{t_1}, \frac{n}{t_2}\right) \geq \min\left(\sqrt{t_1}+\frac{n}{t_1}, \sqrt{t_2}+\frac{n}{t_2}\right) = \min_{r|n}\left(\frac{n}{r} + \sqrt{r}\right),\]
	it suffices to show that 

		\begin{equation}\label{eqn:simplified}\disc(\CA_n) \geq \frac{1}{4\sqrt{d(n)}}\cdot \min\left(\sqrt{t_1}, \frac{n}{t_2}\right).\end{equation}
		
		Now we apply Proposition~\ref{prop:lower-bound-unsimplified} to $n$ and $l = \frac{n}{t_1}$. We bound the two summations as follows. We have
		\begin{equation}\label{eqn:divisor-main-term-1}
		\sum_{1\leq k\leq \frac{n}{t_1}:k|n} \phi(k) \leq \sum_{1\leq k\leq \frac{n}{t_1}:k|n} k \leq \frac{n}{t_1}\cdot d(n).
		\end{equation}
		Note that $t_2$ is the largest factor of $n$ less than $t_1$, so the minimum $k$ in $\{\frac{n}{t_1} < k \leq n: k|n\}$ is $n/t_2$. Thus we have
		\begin{equation}\label{eqn:divisor-main-term-2}
		\sum_{\frac{n}{t_1} < k \leq n: k|n} \frac{1}{k^2} \leq  \sum_{\frac{n}{t_2} \leq k \leq n: k|n} \frac{1}{k^2} \leq \frac{t_2^2}{n^2}\cdot d(n).
		\end{equation}
		
		Using above two bounds, we get
		\[(\disc(\CA_n))^2 \geq \frac{1}{\frac{8}{t_1}d(n) + 2\frac{t_2^2}{n^2}d(n)} \geq \frac{1}{16d(n)}\min\left(t_1, \frac{n^2}{t_2^2}\right).\]
		This is equivalent to \eqref{eqn:simplified}, so we have the expected inequality.
	\end{proof}
	
	The second corollary proves the lower bound in Theorem~\ref{prime power}. The main observation is that the factor $d(n)$  can be removed in \eqref{eqn:divisor-main-term-1} and \eqref{eqn:divisor-main-term-2} when $n$ is a prime power.
	\begin{corollary}[Lower bound in Theorem~\ref{prime power}]\label{cor:lower bound prime powers}Let $p$ be a prime number, and $k$ be a positive integer. Then for $n = p^k$,
		\begin{equation}\label{eqn:lower bound prime powers}\disc(\CA_n) \geq \frac{1}{4}p^{\frac{k - \lfloor k/3\rfloor}{2}}.\end{equation}
	\end{corollary}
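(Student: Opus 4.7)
The plan is to specialize Proposition~\ref{prop:lower-bound-unsimplified} to $n = p^k$ with $l = p^j$ for a suitably chosen $j$. Since the divisors of $p^k$ are exactly $\{p^0, p^1, \dots, p^k\}$, both sums in the proposition simplify to short geometric-type expressions rather than running over $d(n) = k+1$ unrelated terms. Intuitively, to minimize the right-hand side of the proposition we balance the two sums; since the first sum scales roughly as $p^{j-k}$ and the second as $p^{-2(j+1)}$, the balance is achieved near $j = \lfloor k/3 \rfloor$, which matches the exponent $(k - \lfloor k/3 \rfloor)/2$ in the target bound.

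First I would evaluate the inner sum by telescoping:
\[
\sum_{i=0}^{j}\phi(p^i) = 1 + \sum_{i=1}^{j}(p^i - p^{i-1}) = p^j,
\]
giving a first-term contribution of $\frac{8}{p^k}\cdot p^j = \frac{8}{p^{k-j}}$. Next I would bound the second sum by a geometric series:
\[
2\sum_{i=j+1}^{k}\frac{1}{p^{2i}} \leq \frac{2p^2}{p^2-1}\cdot \frac{1}{p^{2(j+1)}} \leq \frac{8/3}{p^{2(j+1)}},
\]
using $p\geq 2$ so that $\frac{p^2}{p^2-1} \leq \frac{4}{3}$.

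Setting $j = \lfloor k/3 \rfloor$ and writing $r_k = k - 3j \in \{0,1,2\}$, a direct comparison of exponents gives $2(j+1) - (k-j) = 2 - r_k \geq 0$, so the second term is at most $\frac{8/3}{p^{k-j}}$. Adding the two contributions yields
\[
\frac{1}{(\disc(\CA_n))^2} \leq \frac{8}{p^{k-j}} + \frac{8/3}{p^{k-j}} = \frac{32/3}{p^{k-\lfloor k/3\rfloor}},
\]
and inverting gives $\disc(\CA_n)^2 \geq \tfrac{3}{32}\, p^{k-\lfloor k/3\rfloor} \geq \tfrac{1}{16}\, p^{k-\lfloor k/3\rfloor}$, which is exactly \eqref{eqn:lower bound prime powers}. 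I do not foresee a real obstacle beyond careful bookkeeping of the three residue classes $r_k\in\{0,1,2\}$; the essential saving relative to Corollary~\ref{cor:lower-bound-with-divisor} is that the geometric structure of the divisor lattice of a prime power lets us replace the crude factor $d(n) = k+1$ by an absolute constant, since each of the two sums is dominated by one or two boundary terms rather than by an arithmetic count of divisors.
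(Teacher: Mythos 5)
Your proposal is correct and follows essentially the same route as the paper's own proof: both specialize Proposition~\ref{prop:lower-bound-unsimplified} to $n = p^k$ with $l = p^{\lfloor k/3\rfloor}$, use the telescoping identity $\sum_{i\le j}\phi(p^i)=p^j$, bound the tail sum $\sum_{i>j}p^{-2i}$ by a geometric series, and balance the two terms at $j=\lfloor k/3\rfloor$. The only cosmetic difference is in the final constant bookkeeping (the paper passes through a $\min$ of the two terms, while you observe that the second term is dominated by the first and add them directly); both yield at least the factor $\tfrac14$.
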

	\begin{proof}
		In Proposition~\ref{prop:lower-bound-unsimplified}, we pick $l = p^t$ for some $1\leq t < n$ to be determined later. 
		Note that the factors of $n$ between $1$ and $l$ are given by $s = p^i$ for $0\leq i\leq t$, which are all factors of $l$. Hence we have
		\[\sum_{1\leq s\leq l: s|n}\phi(s) = l = p^t.\]
		All factors of $n$ larger than $l$ are given by $s = p^{i}$ for $t+1\leq i \leq k$. Thus
		\[\sum_{l < s \leq l: s|n}\frac{1}{s^2} = \sum_{i = t+1}^k p^{-2i} \leq p^{-2t-2}\cdot \left(1 + \frac{1}{p^2} + \frac{1}{p^{4}}+ \cdots\right) \leq 2p^{-2t-2}.\]
		
		Therefore, by Proposition~\ref{prop:lower-bound-unsimplified} we get
		\[\disc(\CA_n) \geq \sqrt{\frac{1}{8p^{t-k} + 4p^{-2t-2}}} \geq \frac{1}{4}\cdot \min\left(p^{\frac{k-t}{2}}, p^{t+1}\right) = \frac{1}{4}\cdot p^{\min(\frac{k-t}{2}, t+1)}.\]
		
		We pick $t = \lfloor{k/3}\rfloor$ to get \eqref{eqn:lower bound prime powers}.
		
	\end{proof}
	
	\section{Concluding Remarks}\label{concludingremarks}
	In Theorem~\ref{main theorem}, the upper and lower bounds are off by a factor of $O(d(n)^{3/2})$. We were not able to close this gap, and it seems that major improvement of either bound would require new observations.
	\begin{problem}Determine $\disc(\CA_n)$ up to a constant factor for all $n$.
	\end{problem}
	
	There are other notions of discrepancy besides the one studied in this paper. Among them there is the {\it hereditary discrepancy}, defined for a system $(\Omega, \CA)$ as
	\[\herdisc(\CA) := \max_{X\subseteq \Omega} \disc(\CA|_X)\]
	where $\CA|_X = \{A\cap X: A\in \CA\}$. Clearly $\disc(\CA) \leq \herdisc(\CA)$. For the set $\CA$ of arithmetic progressions in $[n]$, Matou\v{s}ek and Spencer \cite{MS} proved a stronger statement that $\herdisc(\CA) = O(n^\frac14)$. This is because their partial coloring method works not just for $\CA$, but also for $\CA|_X$ for any $X\subseteq [n]$.
	
	In contrast, our construction of the coloring in Section~\ref{section upper} is only valid for coloring the whole set $\Zn$. While it can be adapted so that the same upper bound (possibly with a larger constant factor) applies to some special subsets of $\Zn$, it does not work for all subsets $X\subseteq \Zn$.
	\begin{problem}
		Estimate the hereditary discrepancy of $\CA_n$.
	\end{problem}
	By Corollary~\ref{cor:subset-coloring} we have $\herdisc(\CA_n) = O(n^\frac12)$. The method used in Section~\ref{entropy} can be adapted to give the following slightly stronger statement. Let $\phi(\cdot)$ be Euler's totient function.
	\begin{theorem}~\label{hereditary}
		There exists a constant $c$ such that 
		for all positive integers $n$, we have 
		\[\herdisc(\CA_n) \le c \phi(n)^{\frac{1}{2}}\left(\log \frac{en}{\phi(n)}\right)^{\frac{3}{2}}.  \]
	\end{theorem}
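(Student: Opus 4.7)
The plan is to adapt the entropy-based partial coloring method from Section \ref{entropy} to exploit the arithmetic structure of $n$. The improvement over the $O(\sqrt{n})$ bound of Corollary \ref{cor:subset-coloring} should come from refining the count of canonical subsets appearing in Lemma \ref{lem:partial-coloring}, using the fact that an AP in $\Zn$ with common difference $d$ lies within a single coset of $\gcd(d,n)\Zn$, and the number of $d \in \{1, \dots, n-1\}$ with $\gcd(d,n)=g$ is exactly $\phi(n/g)$.

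Concretely, I would revisit the bound \eqref{eqn:bound-num-of-sets} on the size of the family $C_2$. Grouping differences by $g = \gcd(d,n)$ and noting that the total work contributed by each group is proportional to $\phi(n/g)$ times the size of a coset, one can replace the crude $(n-1)$ factor by a divisor-weighted sum whose leading behavior is governed by $\phi(n)$ rather than by $n$. Substituting this sharper set count into Lemma \ref{lem:partial-coloring-lemma} with $\Delta_S$ calibrated so that $\sum_S \exp(-\Delta_S^2/(4|S|))$ still fits within the allowed tolerance produces the following strengthening of Lemma \ref{lem:partial-coloring}: for any $X \subseteq \Zn$ with $|X| \geq \phi(n)$, there is a partial coloring of at least $|X|/10$ elements of $X$ with
\[\max_{A\in\CA_n}|\chi(A\cap X)| \le c\,\phi(n)^{1/2}\bigl(\log(en/\phi(n))\bigr)^{1/2}.\]

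With this improved partial coloring in hand, I then iterate as in the proof of Corollary \ref{cor:subset-coloring}, applying it to the uncolored portion $X_i$ as long as $|X_i| \ge \phi(n)$. Each iteration shrinks the uncolored set by a factor of $0.9$, so the process halts after $O(\log(en/\phi(n)))$ rounds, and the residual uncolored set of size at most $\phi(n)$ is then colored via a direct invocation of Corollary \ref{cor:subset-coloring}, contributing $O(\phi(n)^{1/2}(\log(en/\phi(n)))^{1/2})$. Summing the $O(\log(en/\phi(n)))$ iterates, each bounded by $O(\phi(n)^{1/2}(\log(en/\phi(n)))^{1/2})$, yields the total bound $O\bigl(\phi(n)^{1/2}(\log(en/\phi(n)))^{3/2}\bigr)$, and since this holds for every $X\subseteq\Zn$ it bounds $\herdisc(\CA_n)$.

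The hard part will be executing the refined counting correctly: the sets in $C_2$ range in size up to $|X|$, which is typically much larger than $\phi(n)$, so when applying the entropy bound one must either subdivide long canonical intervals or use an $|S|$-dependent $\Delta_S$ that interpolates between a constant of order $\phi(n)^{1/2}(\log(en/\phi(n)))^{1/2}$ for small sets and a larger value permitted by the $2\sqrt{|S|}$ floor in Lemma \ref{lem:partial-coloring-lemma} for sets above that scale. Keeping the resulting sum $\sum_S \exp(-\Delta_S^2/(4|S|))$ below $|X|/50$ while maintaining the desired uniform discrepancy bound is where the bookkeeping from the refined $\phi(n)$-based count of $C_2$ is essential, and is the crux of the argument.
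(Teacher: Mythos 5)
Your overall outline matches the paper's: a refined count of $C_2$ using $\phi(n)$ and a divisor sum, an improved partial coloring lemma of strength $O(\phi(n)^{1/2}(\log(en/\phi(n)))^{1/2})$ per round, iteration down to size $\phi(n)$ with $O(\log(en/\phi(n)))$ rounds, then a cleanup via Corollary~\ref{cor:subset-coloring}. The refined count you describe is exactly what the paper does: a set of size $2^i$ in $C_2$ lives inside a chain $A_{d,a}$ of length $\le n/\gcd(d,n)$, so only differences with $\gcd(d,n)\le n/2^i$ contribute, which yields $f_i \le \frac{m}{2^i}\sum_{l\le n/2^i,\ l|n}\phi(n/l)$; combined with $\phi(n/l)\le \phi(n)/\phi(l)$ and Landau's bound $\sum_{l\le x}1/\phi(l)=O(\log ex)$ this gives $f_i = O\bigl(\tfrac{m}{2^i}\phi(n)\log\tfrac{en}{2^i}\bigr)$.

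However, there is a genuine gap at the step you flag as the crux, and your proposed resolution would fail. You suggest using an $|S|$-dependent $\Delta_S$ that, for sets of size above roughly $M=\phi(n)\log(en/\phi(n))$, sits at or above the $2\sqrt{|S|}$ floor required by Lemma~\ref{lem:partial-coloring-lemma}. But the canonical sets in $C_2$ can have size all the way up to $m$ (for example when $d=1$), and the discrepancy of a single long progression is controlled by $\sum_i \Delta_i$ over $2^i\le m$. If $\Delta_i\ge 2\cdot 2^{i/2}$ for $2^i\ge M$, this sum is dominated by its last term and is $\Theta(\sqrt{m})$, so you recover the $O(\sqrt{m})$ bound of Corollary~\ref{cor:subset-coloring} with no improvement. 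The right move is the opposite: for $2^i\ge M$ one must take $\Delta_i\ll 2\sqrt{2^i}$ (the paper uses $\Delta_i\approx c_1 M/\sqrt{2^i}$), which is affordable precisely because $f_i$ decays like $\phi(n)/2^i$ there. This forces one to leave the regime of Lemma~\ref{lem:partial-coloring-lemma} entirely and invoke the general form of the entropy partial coloring lemma (Lemma~\ref{lem:partial-coloring-lemma-full}), where the penalty for a set is $g(\lambda)=10\exp(-\lambda^2/4)$ only for $\lambda=\Delta_S/\sqrt{|S|}\ge 2$ and switches to $10\log(1+2/\lambda)$ for $0<\lambda<2$. Your plan of keeping $\sum_S \exp(-\Delta_S^2/(4|S|))\le |X|/50$ is not even the correct constraint in the relevant range. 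Without this generalization, the argument cannot produce the claimed bound.
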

	We leave the proof of this theorem to the Appendix. It shows that the upper bound $O(n^{\frac12})$ is not always tight. It would be interesting to determine if there is a matching lower bound of the form $n^{\frac12 - o(1)}$.

\appendix	
	\section{Improved bound for hereditary discrepancy} 
	
	In this appendix, we prove Theorem \ref{hereditary}, an improved upper bound on the hereditary discrepancy of modular arithmetic progressions. We need the following lemma 
	which gives a partial coloring bound. 
	\begin{lemma}\label{lemforhered}
	Let $X\subseteq \Zn$ be a set of size $m > 0$. There exists an absolute constant $c$ such that there is a partial coloring $\chi: X\to \{-1, 0, 1\}$ that assigns $\pm 1$ to at least $m/10$ elements in $X$ such that
	\[\max_{A\in \mathcal{A}_n}|\chi(A\cap X)| \leq c\phi(n)^{\frac12}\left(\log\frac{en}{\phi(n)}\right)^{\frac12}.\]
	\end{lemma}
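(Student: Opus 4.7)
The plan is to adapt the proof of Lemma \ref{lem:partial-coloring}, applying the Matou\v{s}ek--Spencer partial coloring lemma (Lemma \ref{lem:partial-coloring-lemma}) to an augmented family of sets whose discrepancy parameters scale with $\phi(n)$ rather than $m$. A preliminary monotonicity observation---$\sqrt{x}(\log(en/x))^{1/2}$ is increasing on $(0,n]$---shows that Lemma \ref{lem:partial-coloring} already yields the claimed bound up to absolute constants whenever $m \le \phi(n)$ or $\phi(n) = \Theta(n)$, so we may assume $m > \phi(n)$ and $\phi(n) \ll n$.

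The augmented family would consist of two parts. The first is the family $C_2^{\mathrm{small}}$ obtained from the family $C_2$ in the proof of Lemma \ref{lem:partial-coloring} by discarding all sets of size larger than $\phi(n)$; with the original choice $b(s) = 5\sqrt{s}(\log(en/s))^{1/2}$, truncating the geometric sum in \eqref{eqn:bound-num-of-sets} at $i = \lfloor \log_2 \phi(n) \rfloor$ replaces the estimate $\sum_S (|S|/n)^2 \le 2m$ by $\sum_S (|S|/n)^2 \le 2m\phi(n)/n$, which still keeps the sum condition below $m/50$. The second consists of coset-indicator sets $(c + (g)) \cap X$ for each divisor $g$ of $n$ and each representative $c \in \{0,\ldots,g-1\}$, with $\Delta_g = 5\sqrt{n/g}(\log(eg))^{1/2}$; this adds the rapidly convergent contribution $\sum_{g | n} g \cdot (eg)^{-25/4}$ to the sum condition.

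Given the resulting partial coloring $\chi$, I would then bound $|\chi(P\cap X)|$ for an AP $P$ of length $L$ with $\gcd(d,n) = g$ by cases. If $L \le \phi(n)$, the original $C_2$ decomposition applied to $C_2^{\mathrm{small}}$ yields $O(\sqrt{\phi(n)\log(en/\phi(n))})$ as in the original proof. If $L \ge n/g - \phi(n)$, the identity $P = (c+(g)) \setminus ((c+(g))\setminus P)$ combines the coset-indicator bound with the short-AP bound on the complementary AP of length at most $\phi(n)$. The main obstacle is the intermediate regime $\phi(n) < L < n/g - \phi(n)$, which can only occur when $g < n/(2\phi(n))$ and the AP lives in a coset much larger than $\phi(n)$; here neither the direct decomposition nor the coset-complement trick alone suffices. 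I expect to resolve this by further partitioning the large coset $c + (g)$ along an intermediate subgroup $(g')$ with $g | g' | n$ chosen so that $n/g' \asymp \phi(n)$, and iterating the argument within the resulting sub-cosets while carefully preserving the sum condition as the family grows.
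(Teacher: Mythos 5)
Your route is genuinely different from the paper's, and there is a real gap that the final paragraph glosses over. The paper does \emph{not} truncate the dyadic family $C_2$ or add coset indicators. Instead it keeps all sizes and sharpens the count $f_i$ of dyadic pieces of size $2^i$: such pieces can only come from directions $d$ with $\gcd(d,n)\le n/2^i$, and by Landau's estimate $\sum_{l\le x}1/\phi(l)=O(\log ex)$ this gives $f_i\lesssim (m/2^i)\phi(n)\log(en/2^i)$. The bound function $b(s)$ is then chosen to decay so that $b(s)/\sqrt s$ eventually drops \emph{below} $2$ for $s\gg M:=\phi(n)\log(en/\phi(n))$, which forces the use of the \emph{generalized} partial coloring lemma (the full statement from Matou\v sek~4.6, allowing $\Delta_S<2\sqrt{|S|}$), not the restricted version you cite. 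This is the crux: with the restricted lemma, no set $S$ in the family can be assigned $\Delta_S<2\sqrt{|S|}$, so whenever an AP yields an intersection $A\cap X$ of size $\gg\phi(n)\log(en/\phi(n))$, you have no way to certify the target bound.

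Concretely, your approach already fails before the intermediate regime. Your coset indicator for the divisor $g$ carries $\Delta_g=5\sqrt{n/g}\,(\log eg)^{1/2}$; for small $g$ this is far larger than $\sqrt{\phi(n)\log(en/\phi(n))}$. For instance with $g=1$ the only ``coset'' is $\Zn$ and your $\Delta_1=5\sqrt n$, so your near-full-regime decomposition $P=(c+(g))\setminus((c+(g))\setminus P)$ produces a bound of order $\sqrt n$, not $\sqrt M$. Your truncation of $C_2$ at size $\phi(n)$ also breaks the prefix-based dyadic decomposition used in the paper: $X_{d,a}(i,j)=X_{d,a}(1,j)\setminus X_{d,a}(1,i-1)$ involves dyadic prefixes of length up to $j$, not $j-i+1$, so even APs with $|A\cap X|\le\phi(n)$ can demand pieces larger than $\phi(n)$ (this one is fixable by switching to an aligned dyadic decomposition of $[i,j]$ directly, but it is a second oversight). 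Finally, the acknowledged intermediate regime cannot be repaired by ``iterating within sub-cosets'': for an AP with $\gcd(d,n)=g$ and $g\mid g'\mid n$, the AP meets each of the $g'/g$ sub-cosets of $(g')$ in a short AP, but the partial coloring lemma delivers a bound on each set of the family \emph{separately}, with no cancellation, so summing over sub-cosets inflates the bound by the factor $g'/g$. Any fix along these lines must either produce cancellation (which the entropy method alone does not give) or, as the paper does, control the large pieces directly via the generalized partial coloring lemma together with the improved count $f_i$.
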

	
	Assuming this lemma, we may now prove Theorem \ref{hereditary}. 
	\begin{proof}[Proof of Theorem \ref{hereditary} assuming \ref{lemforhered}]
	Let $X\subseteq \Zn$ be a set of size $m$. We show that there exists a coloring $\chi: X\to \{1, -1\}$ such that, for some absolute constant $c$,
	\begin{equation}\label{eqn:hereditary-goal}
	    \max_{A\in \CA_n}|\chi(A\cap X)| \leq c \phi(n)^{\frac{1}{2}}\left(\log \frac{en}{\phi(n)}\right)^{\frac{3}{2}}.
	\end{equation}
	
		The idea is that we iteratively apply Lemma~\ref{lemforhered} to the set of uncolored elements until there are at most $\phi(n)$ elements left, and then apply Corollary~\ref{cor:subset-coloring} to color the remaining elements. Let $c_0, c_1$ be the constants in Lemma~\ref{lemforhered} and Corollary~\ref{cor:subset-coloring}.
		
		Start with $X_0 = X$. For each $i\geq 0$, we apply Lemma~\ref{lemforhered} to $X_i$ to get a partial coloring $\chi_i: X_i \to \{-1, 0, 1\}$, and we let $X_{i+1} = \chi_i^{-1}(0) \subseteq X_i$ to be the set of uncolored elements in $i$-th iteration. We continue this process until the $(k-1)$-th iteration where there are at most $\phi(n)$ elements left (i.e. $|X_{k}| \leq \phi(n)$). Then we apply Corollary~\ref{cor:subset-coloring} to get a coloring $\chi_{k}: X_{k}\to \{-1, 1\}$. Let $\chi$ be the final coloring given by $\chi(x_i) = \chi_i(x_i)$ if $x_i\in X_i\setminus X_{i+1}$ for $0\leq i\leq k-1$, and $\chi(x_{k}) = \chi_{k}(x_{k})$. We know that $|X_i| \leq 0.9|X_{i-1}|$ for all $1\leq i\leq k$, so $|X_k| \leq (0.9)^k|X_0| = (0.9)^km \leq (0.9)^kn$.
		Because we stop when there are at most $\phi(n)$ elements left, we shall see that $k \leq 1 + \log_{0.9}\frac{m}{n} \leq 10\log\frac{en}{m}$.
		
		Applying the bounds on the discrepancy of $\chi_i$ from Lemma~\ref{lemforhered} and Corollary~\ref{cor:subset-coloring}, we conclude that for any $A\in \CA_n$,
		\[
		\begin{split}
		|\chi(A\cap X)| & = \left|\sum_{i=0}^k \chi_i(X_i\cap A)\right| \leq \sum_{i=0}^{k-1} \left|\chi_i(X_i\cap A)\right| + \left|\chi_k(X_k\cap A)\right| \\
		& \leq k\cdot c_0\phi(n)^{\frac12}\left(\log\frac{en}{\phi(n)}\right)^{\frac12} + c_1|X_k|^{\frac12}\left(\log\frac{en}{|X_k|}\right)^{\frac12}\\
		& < 10c_0\phi(n)^{\frac12}\left(\log\frac{en}{m}\right)^\frac 32 + c_1\phi(n)^{\frac12}\left(\log\frac{en}{m}\right)^\frac 12 \leq c\phi(n)^{\frac12}\left(\log\frac{en}{m}\right)^\frac 32
		\end{split}\]
		for $c = 10c_0+c_1$ being an absolute constant. Since it holds for all $A\in \CA_n$, we have \eqref{eqn:hereditary-goal}.
	\end{proof}
	We are left to prove Lemma \ref{lemforhered}. We first need a slight generalization of the partial coloring lemma in Lemma~\ref{lem:partial-coloring-lemma}. In Lemma~\ref{lem:partial-coloring-lemma} we require $\Delta_S \geq 2\sqrt{|S|}$. Here we allow $\Delta_S$ to take any positive value.
	\begin{lemma}[Section 4.6 in \cite{Mat}]\label{lem:partial-coloring-lemma-full}
	Let $(V, C)$ be a set system on $n$ elements, and let a number $\Delta_S > 0$ be given for each set $S\in C$. Suppose that
	\begin{equation}
	    \sum_{S\in C:S\ne \emptyset}g\left(\frac{\Delta_S}{\sqrt{|S|}}\right) \leq \frac{n}{5}
	\end{equation}
	where
	\begin{equation}\label{eqn:def-g}
	    g(\lambda) = \begin{cases} 10e^{-\lambda^2/4} & \textrm{if }\lambda \geq 2, \\ 10\log(1 + 2\lambda^{-1}) & \textrm{if } 0 < \lambda < 2. \end{cases}
	\end{equation}
	Then there exists a partial coloring $\chi$ that assigns $\pm1$ to at least $n/10$ variables (and $0$ to the rest), satisfying $|\chi(S)| \leq \Delta_S$ for each $S\in C$.
	\end{lemma}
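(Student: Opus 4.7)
The plan is to prove the lemma by the entropy method, following Matou\v{s}ek's book (Section 4.6). For a uniformly random coloring $\chi \in \{-1, 1\}^V$, each sum $\chi(S)$ has standard deviation $\sqrt{|S|}$, so a suitable integer-valued signature $\sigma_S(\chi)$ obtained by rounding $\chi(S)$ to buckets of width proportional to $\Delta_S$ has low Shannon entropy controlled by $g(\lambda_S)$, where $\lambda_S := \Delta_S/\sqrt{|S|}$. Two colorings with identical signatures will have close set-sums on every $S$, so their half-difference is a $\{-1,0,1\}$-valued partial coloring that respects all of the $\Delta_S$ constraints.

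Concretely, the proof proceeds in five steps. First, fix a rounding convention: define $\sigma_S(\chi)$ as $\chi(S)$ rounded to the nearest multiple of $2\Delta_S$, so that $\sigma_S(\chi_1) = \sigma_S(\chi_2)$ forces $|\chi_1(S) - \chi_2(S)| \leq 2\Delta_S$. Second, prove the entropy estimate $H(\sigma_S(\chi)) \leq g(\lambda_S)$ in each of the two regimes (the main work, discussed below). Third, apply subadditivity of entropy to obtain
\[
H\bigl((\sigma_S(\chi))_{S \in C}\bigr) \;\leq\; \sum_{S \ne \emptyset} H(\sigma_S(\chi)) \;\leq\; \sum_{S \ne \emptyset} g(\lambda_S) \;\leq\; n/5.
\]
Fourth, from the elementary inequality $\max_t \Pr[\sigma(\chi) = t] \geq 2^{-H(\sigma(\chi))}$, deduce that some level set $A := \{\chi : \sigma(\chi) = \bar\sigma\}$ has $|A| \geq 2^{4n/5}$. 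The Hamming ball of radius $n/10$ in $\{-1,1\}^V$ has volume at most $2^{0.47n}$ (by the standard binary entropy bound), so fixing any $\chi_1 \in A$, not all of $A$ can lie in the ball of radius $n/10 - 1$ around $\chi_1$, and there exists $\chi_2 \in A$ at Hamming distance at least $n/10$ from $\chi_1$. Fifth, set $\chi := (\chi_1 - \chi_2)/2 \in \{-1,0,1\}^V$; this has at least $n/10$ nonzero coordinates, and for each $S$ the identity $\sigma_S(\chi_1) = \sigma_S(\chi_2)$ yields $|\chi(S)| \leq \Delta_S$.

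The main obstacle is the entropy estimate in step two, which must be handled separately in the two regimes that determine the piecewise form of $g$. For $\lambda_S \geq 2$, Hoeffding's inequality gives $\Pr[|\chi(S)| \geq 2k\Delta_S] \leq 2 e^{-2k^2 \lambda_S^2}$, so $\sigma_S$ is concentrated at $0$ and the entropy sum $\sum_k p_k \log(1/p_k)$ over the tail buckets forms a rapidly decaying series dominated by a constant multiple of $e^{-\lambda_S^2/4}$. For $0 < \lambda_S < 2$, the distribution of $\sigma_S$ is effectively supported on $O(1/\lambda_S)$ buckets within a few standard deviations of $0$, and bounding entropy by the logarithm of the effective support plus a small tail correction yields the $\log(1 + 2/\lambda_S)$ term. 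The constant $10$ in $g$ is chosen generously so as to absorb the Hoeffding constant, the bit/nat conversion, the factor of $2$ in the bucket width, and all other slack, ensuring both that the individual entropy bounds hold and that the resulting level-set size $2^{4n/5}$ comfortably exceeds the Hamming-ball volume threshold.
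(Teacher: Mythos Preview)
The paper does not give its own proof of this lemma; it simply cites it from Section~4.6 of Matou\v{s}ek's book \cite{Mat}, so there is nothing to compare against directly. Your sketch is a faithful outline of the standard entropy argument found in that reference---the signature/bucket construction, the two-regime entropy bound, subadditivity, and the Hamming-ball pigeonhole step are all correct as stated, and the slack constants (the $10$ in $g$, the $n/5$ threshold versus the $\approx 0.47n$ ball volume) are arranged just as in the source.
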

	
	We now prove Lemma \ref{lemforhered}. 
	
	\begin{proof}[Proof of Lemma \ref{lemforhered}]
	We use the same decomposition as in Lemma~\ref{lem:partial-coloring} to get the family $C_2$ of nonempty subsets of $X$. From the argument in Lemma~\ref{lem:partial-coloring}, we know that sets in $C_2$ are of size $2^i$ for $2^i \leq m$. If $\chi: X\to \{-1, 0, 1\}$ is such that for all $0\leq i\leq \log_2 m$ we have $\chi(S) \leq \Delta_i$ for any set $S\in C_2$ of size $2^i$, then 
	\begin{equation}\label{eqn:decomp-A}
	    \max_{A\in \CA_n}|\chi(A\cap X)| \leq 4\sum_{0\leq i\leq \log_2m}\Delta_i.
	\end{equation}
	Then we apply a better bound on the number of sets in $C_2$ of size $2^i$, which we shall denote as $f_i$. The notations here are the same as in Lemma~\ref{lem:partial-coloring}. For each $1\leq d < n$ and $0\leq a < \gcd(d, n)$, the number of choices of $t$ in \eqref{eqn:define-C2} is $\lfloor l_{d, a}/2^i\rfloor$. Yet note that $l_{d, a} \leq \frac{n}{\gcd(d, n)}$. Hence if $\gcd(d, n) > \frac{n}{2^i}$, then there are no such sets included in the set $X_{d, a}$. Therefore we have the following better bound on $f_i$:
	\begin{equation*}
	    f_i \leq \sum_{1\leq d \leq n-1: \gcd(d, n) \leq \frac{n}{2^i}}\sum_{a=0}^{\gcd(d, n) - 1}\left\lfloor\frac{l_{d, a}}{2^i}\right\rfloor = \sum_{1\leq d \leq n-1: \gcd(d, n) \leq \frac{n}{2^i}}\frac{m}{2^i} \leq \sum_{1\leq l \leq \frac{n}{2^i}:l|n}\phi(n/l)\cdot \frac{m}{2^i}.
	\end{equation*}
	Because $\phi(ab) \geq \phi(a)\phi(b)$ for all $a, b\in \N$, we know that $\phi(n/l) \leq \phi(n)/\phi(l)$ for all $l$ that divides $n$. Also note that $0 < \phi(n)/\phi(l)$ for all $l$ that does not divide $n$. By Landau \cite[p. 184]{Landau}, for any $x\geq 1$, $\sum_{l \leq x}1/\phi(l) \leq c_0\log ex$ for some absolute constant $c_0$. Hence we have
    \begin{equation}\label{eqn:bound-num-of-sets-A}f_i \leq \frac{m}{2^i}\sum_{1\leq l\leq \frac{n}{2^i}: l|n}\phi(n/l) \leq \frac{m}{2^i}\phi(n)\sum_{1\leq l\leq \frac{n}{2^i}}\frac{1}{\phi(l)} \leq c_0\frac{m}{2^i}\phi(n)\log\frac{en}{2^i}.
    \end{equation}
    
    For simplicity we denote $M = \phi(n)\log\frac{en}{\phi(n)}$. We define $b: (0, n] \to \R$, given by
    \begin{equation}\label{eqn:b-A}
        b(s) = \begin{cases} c_1\sqrt{s}\cdot \left(\frac{s}{M}\right)^{-1} & \textrm{if } s \geq M, \\ c_1\sqrt{s}\cdot \left(\frac{s}{M}\right)^{-0.1} & \textrm{if } s < M, \end{cases}
    \end{equation}
    where $c_1 > 2$ is an absolute constant to be determined later. We would like to show that there exists a partial coloring that colors at least $m/10$ elements in $X$, such that for any $S\in C_2$,
    \begin{equation}\label{eqn:lem-outcome-A}
        |\chi(S)| \leq b(|S|).
    \end{equation}
    In order to apply Lemma~\ref{lem:partial-coloring-lemma-full}, it suffices to verify that
    \begin{equation}\label{eqn:lemma-goal-A}
        \sum_{S\in C_2}g\left(\frac{b(s)}{\sqrt{|S|}}\right) = \sum_{i = 0}^{\lfloor\log_2 m\rfloor}f_i\cdot g\left(b(2^i)2^{-i/2}\right) \leq m/5.
    \end{equation}
	For fixed $i$, we denote $\tau = 2^i/M$, so $2^i = \tau M \geq \tau\phi(n)$. Observe that 
	\[f_i \leq c_0m \cdot \tau^{-1} \cdot \frac{\log \frac{en}{2^i}}{\log\frac{en}{\phi(n)}} \leq c_0m \tau^{-1}\frac{\log \frac{en}{\tau\phi(n)}}{\log\frac{en}{\phi(n)}} = c_0m\tau^{-1}\left(1+\frac{\log \tau^{-1}}{\log\frac{en}{\phi(n)}}\right).\]
	When $\tau < 1$, we have $b(2^i)2^{-i/2} = c_1\tau^{-0.1}$ and
	\[f_i \leq  c_0m\tau^{-1}\left(1+\frac{\log \tau^{-1}}{\log\frac{en}{\phi(n)}}\right) \leq c_0m\tau^{-1}(1+\log \tau^{-1}).
	\]
	When $\tau \geq 1$, we have $b(2^i)2^{-i/2} = c_1\tau^{-1}$ and 
	\[f_i \leq c_0m\tau^{-1}\left(1+\frac{\log \tau^{-1}}{\log\frac{en}{\phi(n)}}\right) \leq c_0m\tau^{-1}.\]
	Therefore if we write the summation in \eqref{eqn:lemma-goal-A} in terms of $\tau$, we have
	\begin{equation}\label{eqn:goal-as-tau-A}
	    \sum_{i = 0}^{\lfloor\log_2 m\rfloor}f_i\cdot g\left(b(2^i)2^{-i/2}\right) = c_0m\left(\sum_{\tau \geq 1}\tau^{-1}g(c_1\tau^{-1}) + \sum_{\tau < 1}\tau^{-1}(1+\log \tau^{-1})g(c_1\tau^{-0.1})\right),
	\end{equation}
	where the summation of $\tau$ is over a geometric sequence with ratio $2$. By definition of $g$ in \eqref{eqn:def-g}, $g$ is monotonically decreasing. Let $T$ be a large absolute constant to be determined later. We have
	\[\sum_{\tau \geq T}\tau^{-1}g(c_1\tau^{-1}) \leq \sum_{\tau \geq T}\tau^{-1}g(2\tau^{-1}) = \sum_{\tau \geq T}\tau^{-1}\cdot 10\log(1+\tau).\]
	Since $\sum_{\tau \geq 1}\tau^{-1}\cdot 10\log(1+\tau)$ converges, there exists sufficiently large constant $T$ satisfying
	\begin{equation}\label{eqn:term-1-A}
	    \sum_{\tau \geq T}\tau^{-1}g(c_1\tau^{-1}) \leq \frac{1}{20c_0}.
	\end{equation}
	We bound the second term on the right hand side of \eqref{eqn:goal-as-tau-A} similarly. We have
	\[\sum_{\tau < T^{-1}}\tau^{-1}(1+\log \tau^{-1})g(c_1\tau^{-0.1}) \leq \sum_{\tau < T^{-1}} \tau^{-1}(1+\log \tau^{-1})g(2\tau^{-0.1}) = \sum_{\tau < T^{-1}} \tau^{-1}(1+\log \tau^{-1})\cdot 10e^{-\tau^{-0.2}}.\]
	Since $\sum_{\tau < 1} \tau^{-1}(1+\log \tau^{-1})\cdot 10e^{-\tau^{-0.2}}$ converges, there exists sufficiently large constant $T$ satisfying
	\begin{equation}\label{eqn:term-2-A}
	    \sum_{\tau < T^{-1}}\tau^{-1}(1+\log \tau^{-1})g(c_1\tau^{-0.1}) \leq \frac{1}{20c_0}.
	\end{equation}
	Hence, there exists constant $T$ such that whenever $c_1 > 2$, \eqref{eqn:term-1-A} and \eqref{eqn:term-2-A} hold. Note that there are at most $(1+\log_2T)$ terms in each of the ranges $1\leq \tau < T$ and $T^{-1} \leq \tau < 1$, and that $g(x)$ is monotonically decreasing and tends to zero as $x$ goes to infinity. We can choose $c_1 > 2$ sufficiently large so that
	\begin{equation}\label{eqn:term-3-A}
	    \sum_{1\leq \tau < T}\tau^{-1}g(c_1\tau^{-1}) \leq (1+\log_2T)\cdot g(c_1T^{-1}) \leq \frac{1}{20c_0}
	\end{equation}
	and 
	\begin{equation}\label{eqn:term-4-A}
	    \sum_{T^{-1} \leq  \tau < 1}\tau^{-1}(1+\log \tau^{-1})g(c_1\tau^{-0.1}) \leq (1+\log_2T)\cdot T(1+\log T)g(c_1) \leq \frac{1}{20c_0}.
	\end{equation}
	
	Combining inequalities \eqref{eqn:term-1-A}, \eqref{eqn:term-2-A}, \eqref{eqn:term-3-A} and \eqref{eqn:term-4-A}, there exists a constant $c_1$ such that
	\begin{equation}\label{eqn:bound-on-sums}
	\sum_{\tau \geq 1}\tau^{-1}(1+\log\tau)g(c_1\tau^{-1}) + \sum_{\tau < 1}\tau^{-1}g(c_1\tau^{-0.1}) \leq \frac{1}{5c_0}.\end{equation}
	We use this constant $c_1$ to define the function $b(\cdot)$ in \eqref{eqn:b-A}. Combining \eqref{eqn:goal-as-tau-A} and \eqref{eqn:bound-on-sums}, we have
	\[\sum_{i = 0}^{\lfloor\log_2 m\rfloor}f_i\cdot g\left(b(2^i)2^{-i/2}\right) \leq c_0m \cdot \frac{1}{5c_0} = \frac{m}{5}.\]
	Therefore \eqref{eqn:lemma-goal-A} holds. By Lemma~\ref{lem:partial-coloring-lemma-full}, there exists a partial coloring $\chi: X\to \{-1, 0, 1\}$ that assigns $\pm 1$ to at least $m/10$ elements in $X$ such that \eqref{eqn:lem-outcome-A} holds. For this coloring $\chi$, we may choose $\Delta_i = b(2^i)$ in \eqref{eqn:decomp-A}. Again we denote $\tau = 2^i/M$, or equivalently $2^i = \tau M$. We have
	\[\Delta_i = b(2^i) = \begin{cases} c_1\sqrt{M}\cdot \tau^{-0.5} & \textrm{if } \tau \geq 1, \\ c_1\sqrt{M}\cdot \tau^{0.4} & \textrm{if } \tau < 1. \end{cases}\]
	Put this in \eqref{eqn:decomp-A}. We know that $\chi$ satisfies that
	\[\max_{A\in \CA_n}|\chi(A\cap X)| \leq 4\sum_{0\leq i\leq \log_2m}\Delta_i = 4c_1\sqrt{M}\left(\sum_{\tau \geq 1}\tau^{-0.5} + \sum_{\tau < 1}\tau^{0.4}\right).\]
	Note that summation of $\tau$ is over a geometric sequence with ratio $2$. Hence we have 
	\[\sum_{\tau \geq 1}\tau^{-0.5} \leq \frac{1}{1-2^{-0.5}} \quad \AND \quad \sum_{\tau < 1}\tau^{0.4}\leq \frac{1}{1-2^{-0.4}}.\]
	Therefore we conclude that we can find $\chi$ that assigns $\pm1$ to at least $m/10$ elements in $X$ and satisfies
	\[ \max_{A\in \CA_n}|\chi(A\cap X)| \leq 4c_1\sqrt{M}\left(\frac{1}{1-2^{-0.5}} + \frac{1}{1-2^{-0.4}}\right) \leq c\sqrt{M} = c\phi(n)^{\frac12}\left(\log\frac{en}{\phi(n)}\right)^{\frac12}\]
	for some appropriate absolute constant $c$.\end{proof}
	
\end{document}